\documentclass[reqno,11pt,a4paper]{amsart}
\usepackage[a4paper,left=35mm,right=35mm,top=35mm,bottom=35mm,marginpar=25mm]{geometry} 
\usepackage{amsmath}
\usepackage{amssymb}
\usepackage{amsthm}
\usepackage{amscd}
\usepackage{esint}
\usepackage[ansinew]{inputenc}
\usepackage{cite}
\usepackage{bbm}
\usepackage{xcolor}
\usepackage[final]{graphicx}
\usepackage[colorlinks=true,citecolor=blue,linkcolor=blue]{hyperref}
\usepackage{calc}
\usepackage{bm}
\usepackage{enumerate}
\usepackage{transparent}
\usepackage{stmaryrd}
\usepackage{mathtools}

\numberwithin{equation}{section}

\newtheoremstyle{thmlemcorr}{10pt}{10pt}{\itshape}{}{\bfseries}{.}{10pt}{{\thmname{#1}\thmnumber{ #2}\thmnote{ (#3)}}}
\newtheoremstyle{thmlemcorr*}{10pt}{10pt}{\itshape}{}{\bfseries}{.}\newline{{\thmname{#1}\thmnumber{ #2}\thmnote{ (#3)}}}
\newtheoremstyle{defi}{10pt}{10pt}{\itshape}{}{\bfseries}{.}{10pt}{{\thmname{#1}\thmnumber{ #2}\thmnote{ (#3)}}}
\newtheoremstyle{remexample}{10pt}{10pt}{}{}{\bfseries}{.}{10pt}{{\thmname{#1}\thmnumber{ #2}\thmnote{ (#3)}}}
\newtheoremstyle{ass}{10pt}{10pt}{}{}{\bfseries}{.}{10pt}{{\thmname{#1}\thmnumber{ #2}\thmnote{ (#3)}}}

\theoremstyle{thmlemcorr}
\newtheorem{theorem}{Theorem}
\numberwithin{theorem}{section}
\newtheorem{lemma}[theorem]{Lemma}
\newtheorem{corollary}[theorem]{Corollary}
\newtheorem{proposition}[theorem]{Proposition}

\newtheorem{conjecture}[theorem]{Conjecture}

\theoremstyle{thmlemcorr*}
\newtheorem{theorem*}{Theorem}
\newtheorem{lemma*}[theorem]{Lemma}
\newtheorem{corollary*}[theorem]{Corollary}
\newtheorem{proposition*}[theorem]{Proposition}
\newtheorem{problem*}[theorem]{Problem}
\newtheorem{conjecture*}[theorem]{Conjecture}

\theoremstyle{defi}
\newtheorem{definition}[theorem]{Definition}

\theoremstyle{remexample}
\newtheorem{remark}[theorem]{Remark}
\newtheorem{example}[theorem]{Example}

\theoremstyle{ass}

\newcommand{\Crm}{\mathrm{C}}

\newcommand{\Lrm}{\mathrm{L}}

\newcommand{\Wrm}{\mathrm{W}}

\newcommand{\Acal}{\mathcal{A}}
\newcommand{\Bcal}{\mathcal{B}}
\newcommand{\Ccal}{\mathcal{K}}
\newcommand{\Dcal}{\mathcal{D}}
\newcommand{\Ecal}{\mathcal{E}}
\newcommand{\Fcal}{\mathcal{F}}

\newcommand{\Hcal}{\mathcal{H}}

\newcommand{\Kcal}{\mathcal{K}}
\newcommand{\Lcal}{\mathcal{L}}
\newcommand{\Mcal}{\mathcal{M}}

\newcommand{\Rcal}{\mathcal{R}}
\newcommand{\Scal}{\mathcal{S}}

\newcommand{\Ybf}{\mathbf{Y}}

\newcommand{\Abb}{\mathbb{A}}
\renewcommand{\Bbb}{\mathbb{B}}

\newcommand{\Lbb}{\mathbb{L}}

\newcommand{\Nbb}{\mathbb{N}}

\newcommand{\Sbb}{\mathbb{S}}
\newcommand{\Tbb}{\mathbb{T}}

\DeclareMathOperator{\id}{Id}

\DeclareMathOperator{\SD}{SD}

\DeclareMathOperator{\im}{im}
\DeclareMathOperator{\diam}{diam}

\DeclareMathOperator{\curl}{curl}
\DeclareMathOperator{\dist}{dist}
\DeclareMathOperator{\rank}{rank}
\DeclareMathOperator{\trace}{trace}
\DeclareMathOperator{\spn}{span}

\DeclareMathOperator{\supp}{supp}

\newcommand{\ii}{\mathrm{i}}
\newcommand{\Id}{\mathrm{Id}}
\newcommand{\set}[2]{\left\{\, #1 \ \ \textup{\textbf{:}}\ \ #2 \,\right\}}
\newcommand{\setn}[2]{\{\, #1 \ \ \textup{\textbf{:}}\ \ #2 \,\}}
\newcommand{\setb}[2]{\bigl\{\, #1 \ \ \textup{\textbf{:}}\ \ #2 \,\bigr\}}

\newcommand{\setBB}[2]{\biggl\{\, #1 \ \ \textup{\textbf{:}}\ \ #2 \,\biggr\}}

\newcommand{\norm}[1]{\|#1\|}

\newcommand{\abs}[1]{|#1|}

\newcommand{\dpr}[1]{\langle #1 \rangle}

\newcommand{\dprb}[1]{\bigl\langle #1 \bigr\rangle}

\newcommand{\dprBB}[1]{\biggl\langle #1 \biggr\rangle}

\newcommand{\ddprb}[1]{\bigl\langle\hspace{-2.5pt}\bigl\langle #1 \bigr\rangle\hspace{-2.5pt}\bigr\rangle}

\newcommand{\cl}[1]{\overline{#1}}
\newcommand{\di}{\mathrm{d}}
\newcommand{\dd}{\;\mathrm{d}}

\newcommand{\N}{\mathbb{N}}
\newcommand{\R}{\mathbb{R}}
\newcommand{\C}{\mathbb{C}}

\newcommand{\loc}{\mathrm{loc}}

\newcommand{\toweak}{\rightharpoonup}
\newcommand{\toweakstar}{\overset{*}\rightharpoonup}

\newcommand{\embed}{\hookrightarrow}

\newcommand{\sbullet}{\begin{picture}(1,1)(-0.5,-2)\circle*{2}\end{picture}}
\newcommand{\frarg}{\,\sbullet\,}
\newcommand{\BV}{\mathrm{BV}}
\newcommand{\BD}{\mathrm{BD}}

\newcommand{\BVY}{\mathbf{BVY}}

\DeclareMathOperator{\Tan}{Tan}

\DeclareMathOperator{\proj}{pr}

\newcommand{\term}[1]{\emph{#1}}

\newcommand{\proofstep}[1]{\textit{#1}}

\makeatletter
\DeclareFontFamily{OMX}{MnSymbolE}{}
\DeclareSymbolFont{MnLargeSymbols}{OMX}{MnSymbolE}{m}{n}
\SetSymbolFont{MnLargeSymbols}{bold}{OMX}{MnSymbolE}{b}{n}
\DeclareFontShape{OMX}{MnSymbolE}{m}{n}{
	<-6>  MnSymbolE5
	<6-7>  MnSymbolE6
	<7-8>  MnSymbolE7
	<8-9>  MnSymbolE8
	<9-10> MnSymbolE9
	<10-12> MnSymbolE10
	<12->   MnSymbolE12
}{}
\DeclareFontShape{OMX}{MnSymbolE}{b}{n}{
	<-6>  MnSymbolE-Bold5
	<6-7>  MnSymbolE-Bold6
	<7-8>  MnSymbolE-Bold7
	<8-9>  MnSymbolE-Bold8
	<9-10> MnSymbolE-Bold9
	<10-12> MnSymbolE-Bold10
	<12->   MnSymbolE-Bold12
}{}

\let\llangle\@undefined
\let\rrangle\@undefined
\DeclareMathDelimiter{\llangle}{\mathopen}%
{MnLargeSymbols}{'164}{MnLargeSymbols}{'164}
\DeclareMathDelimiter{\rrangle}{\mathclose}%
{MnLargeSymbols}{'171}{MnLargeSymbols}{'171}
\makeatother

 
\def\XXint#1#2#3{{\setbox0=\hbox{$#1{#2#3}{\int}$} 
		\vcenter{\hbox{$#2#3$}}\kern-.5\wd0}}


\newcommand{\eps}{\varepsilon}
\renewcommand{\phi}{\varphi}
\renewcommand{\hat}{\widehat}

\newcommand{\mres}{\mathbin{\vrule height 1.6ex depth 0pt width
		0.13ex\vrule height 0.13ex depth 0pt width 1.3ex}}

\title[Higher integrability for measures satisfying a PDE constraint]{Higher integrability for \\ measures satisfying a PDE constraint}

\author[Arroyo-Rabasa]{Adolfo Arroyo-Rabasa}
\address{\textit{A. Arroyo-Rabasa:} Mathematics Institute, University of Warwick, Coventry CV4 7AL, UK.}
\email{adolforabasa@gmail.com}

\author[De Philippis]{Guido De Philippis}
\address{\textit{G.~De Philippis:} Courant Institute of Mathematical Sciences, New York University, 251 Mercer St., New York, NY 10012, USA.}
\email{guido@cims.nyu.edu}

\author[Hirsch]{Jonas Hirsch}
\address{\textit{J.~Hirsch:}  Mathematisches Institut, Universit\"at Leipzig, Augustus Platz 10, D04109 Leipzig, Germany}
\email{hirsch@math.uni-leipzig.de}

\author[Rindler]{Filip Rindler}
\address{\textit{F.~Rindler:} Mathematics Institute, University of Warwick, Coventry CV4 7AL, UK.}
\email{F.Rindler@warwick.ac.uk}

\author[Skorobogatova]{Anna Skorobogatova}
\address{\textit{A.~Skorobogatova:} Department of Mathematics, Fine Hall, Princeton University, Washington Road, Princeton NJ 08540, USA.}
\email{as110@princeton.edu}

\begin{document}
	
	\begin{abstract} We establish higher integrability estimates for constant-coefficient systems of   linear PDEs 
		\[
		\Acal \mu = \sigma,
		\]
		where $\mu \in \Mcal(\Omega;V)$ and $\sigma\in \Mcal(\Omega;W)$ are vector measures and the polar $\frac{\di \mu}{\di |\mu|}$ is uniformly close to a convex cone of $V$ intersecting the wave cone of $\Acal$ only at the origin. More precisely, we prove local compensated compactness estimates of the form
		\[
		\|\mu\|_{\Lrm^p(\Omega')} \lesssim |\mu|(\Omega) + |\sigma|(\Omega), \qquad \Omega' \Subset \Omega.
		\]
		Here, the exponent $p$ belongs to the (optimal) range $1 \leq p < d/(d-k)$, $d$ is the dimension of $\Omega$, and $k$ is the order of $\Acal$. We also obtain the limiting case $p = d/(d-k)$ for canceling constant-rank operators. We consider applications to compensated compactness and {applications to the theory of} functions of bounded variation and bounded deformation.
		\vspace{4pt}
		
		\noindent\textsc{Keywords:} $\mathcal{A}$-free measure, PDE constraint, BV, BD, compensated compactness.
		
		\vspace{4pt}
		
		\noindent\textsc{Date:} \today{}.
	\end{abstract}

	\maketitle

	\section{Introduction}
	Let $V,W$ be finite-dimensional inner product vector spaces. We consider a constant-coefficient homogeneous linear differential operator $\Acal : \Dcal'(\R^d;V) \to \Dcal'(\R^d;W)$ of order $k$, which acts on smooth maps $u$ as
	\begin{equation}\label{eq:A}
		\Acal u = \sum_{|\alpha| = k} A_\alpha\partial^\alpha u, \qquad A_\alpha \in \mathrm{Lin}(V,W),
	\end{equation}
	where $\alpha =(\alpha_1,\dots,\alpha_d) \in \Nbb_0^d$ is a multi-index with modulus $|\alpha| := \alpha_1 + \dots + \alpha_d$, and $\partial^\alpha$ stands for the composition of the distributional partial derivatives $\partial_1^{\alpha_1} \cdots \partial_d^{\alpha_d}$. As usual, the \emph{principal symbol} of $\Acal$ is the $k$-homogeneous polynomial map 
	\[
	\Abb(\xi) := (2\pi \mathrm{i})^k\sum_{|\alpha| = k}  A_\alpha\xi^\alpha \in \mathrm{Lin}(V,W), \qquad \xi \in \R^d,
	\]
	where $\xi^\alpha := \xi_1^{\alpha_1} \cdots \xi_d^{\alpha_d}$. Given an open set $\Omega$ of $\R^d$, we shall consider  $V$-valued measures $\mu \in \Mcal(\Omega;V)$ satisfying the distributional PDE constraint
	\begin{equation}\label{eqn:Amu_pde}
		\Acal \mu = \sigma \qquad\text{in $\Dcal'(\Omega;W)$,}
	\end{equation}
	where $\sigma \in \Mcal(\Omega;W)$. In the special case that $\sigma \equiv 0$, we say that $\mu$ is an \emph{$\Acal$-free measure}.
	
	An object of pivotal importance is the \emph{wave cone} associated to $\Acal$, which is defined as
	\[
	\Lambda_\Acal := \bigcup_{|\xi| = 1} \ker \Abb(\xi) \subset V. 
	\]
	Observe that $\Lambda_\Acal$ characterizes those amplitudes on which the operator fails to be elliptic, i.e., 
	\begin{equation}\label{eqn:A_elliptic}
		v \in V \setminus  \Lambda_\Acal \quad\iff\quad \text{$|\Abb(\xi)v| \geq c|\xi|^k$ for some $c > 0$ and all $\xi \in \R^d$.}
	\end{equation}

	The wave cone $\Lambda_\Acal$ also plays a critical role in the theory of \term{compensated compactness}~(see, e.g.,~\cite{Tartar79,Tartar83,DiPerna85,Murat78,Murat79,Murat81}). Classically, the theory of compensated compactness for elliptic systems tells us that if $L$ is a linear subspace of $V$ with no $\Lambda_\Acal$-connections, namely (cf.~\cite[Section~2.6]{Muller99} or~\cite[Section~8.8]{Rindler18book})
	\[
	v - v' \notin \Lambda_\Acal \qquad \text{for all distinct $v,v' \in L$,}
	\]  and if $(u_j)$ is a sequence of $\Acal$-free functions satisfying
	\begin{align*}
			u_j & \toweak u \quad \text{in $\Lrm^p$},\\
			\dist(u_j,L) &\to 0 \quad \text{in measure,}
		\end{align*} 
		then $u \in \Crm^\infty(\Omega;L)$ and
		\[
		u_j \to u \quad \text{in measure}.
		\]
		Naturally, the choice to work with $\Lrm^p$ for $p > 1$ rules out any ($\Lrm^1$-)concentrations along the sequence.
		If $p = 1$, the same result holds if instead of the weak $\Lrm^p$-convergence, one assumes that $u_j\Lcal^d$ converges to $u \Lcal^d$ weakly* in the sense of measures. This, in turn, may be understood as an $\Lrm^1_\mathrm{w}$ (weak-$\Lrm^1$) compensated compactness result, which generally is not well-suited to rule out mass concentrations. The guiding question behind the present work is to investigate in what form compensated compactness theory can be extended to an $\Lrm^1$-context that prevents mass concentration. Assume we are given an $\Lrm^1$-bounded sequence of $\Acal$-free maps that take values in a cone that only intersects the wave cone at the origin and that converges weakly* in $\Mcal(\Omega;V)$ to some limit. Then, is it possible to show that the measures are actually $\Lrm^q$-maps for some $q > 1$ and that the weak* convergence can be improved to a stronger notion of convergence?  A first guess is to consider the  restriction 
		\begin{equation}\label{eq:perturbation}
			\dist\left(\frac{u(x)}{|u(x)|},L\right) \le \eps,
		\end{equation}
		for a sufficiently small $\eps> 0$. As we shall see later, this restriction is robust enough  to  establish \emph{a priori} $\Lrm^p$ higher-integrability estimates of the form
		\[
		u \in \Lrm^p(\Omega) \quad \Longrightarrow \quad  \|u\|_{\Lrm^p(\Omega')} \lesssim \|\Acal u\|_{L^1(\Omega)} + \|u\|_{L^1(\Omega)}
		\]
		for some $p > 1$ depending only on $d$ and $\Acal$ (see Remark~\ref{rem:a_priori}). However, the constraint~\eqref{eq:perturbation} being \emph{double-sided} (non-convex) allows for \emph{convex integration} methods that provide counterexamples to $\Lrm^p$ integrability. A classic example of this is contained in the work of Astala et al.~\cite{Astala08}, where it is shown that the double-sided constraint allows for the construction of almost $k$-quasiconformal curl-free fields with infinite $L^{1 + \delta}$-norm. More precisely, they proved (cf. Theorem~\ref{thm:astala} and Proposition~\ref{prop:astala} in the last section) that, for every $\delta > 0$, there exists an integrable curl-free field $u_\delta : \Omega \subset \R^2 \to \R^{2 \times 2}$ satisfying  
		$$
		\dist\left(\frac{u_\delta(x)}{|u_\delta(x)|},L_0\right) = C\delta \quad \text{for a.e. $x \in \Omega$, \quad but \quad $u_\delta \notin L^{1 + \delta}_\loc(\Omega)$}
		$$ 
		where $L_0 \subset \R^{2 \times 2}$ is the subspace of conformal matrices (a subspace with no rank-one connections). Hence, in order to establish higher-integrability estimates (without assuming a priori $\Lrm^p$ regularity), we shall henceforth work under a convexity (or {one-sided}) assumption 
		\[
		u(x) \in \mathcal \Kcal \quad \text{at a.e. $x \in \Omega$},
		\]
		where $\Kcal$ is a convex subset of $\set{y \in V}{\dist(y,L) \le \eps |y|}$, the $\eps$-cone about $L$. Notice that this convexity constraint is essential to guarantee the existence of $\Lrm^p$-regularizations by means of standard mollification.

		%
		%

	\subsection*{Notation}In all that follows $\Omega \subset \R^d$ is an open (not necessarily bounded) subset of $\R^d$. To state  the higher-integrability estimates, we work with Lipschitz  (open, bounded, and connected) domains $\Omega' \Subset \Omega$. We write
		\[
		S_V \coloneqq \set{v \in V}{v \cdot v = 1},
		\]
		to denote the unit sphere in $V$.
	
		Our main result establishes compensated $\Lrm^p$-regularity for functions with $\Lrm^p$-bounded $\Acal$-gradients when the polar of the measure is ($\Lrm^\infty$-close) uniformly close to a convex subset of $L$ with no $\Lambda_\Acal$-connections. In this regard, our results are reminiscent of the Gagliardo--Nirenberg estimates for gradients and the classical estimates for injective elliptic systems:

	\begin{theorem}[Higher integrability]\label{thm:Lp}
		Let $\Acal$ 
		be a homogenenous linear PDE operator of order $k$, from $V$ to $W$. Let $L$ be a subspace of $V$ satisfying the ellipticity condition
		\begin{equation}\label{eq:connections}
			L \cap \Lambda_\Acal = \{0_V\}
		\end{equation}
		and let
		\[
		\begin{aligned}
			p &\in \Big[1,\frac{d}{d-k} \Big)  && \text{if $k < d$},\\
			p &\in [1,\infty)  && \text{if $k \geq d$}.
		\end{aligned}
		\]
		Then, for every compactly contained subdomain $\Omega' \Subset \Omega$  there exists a positive constant $\eps = \eps(p,d,\Acal,L,\Omega')$ with the following property: If $\mu \in \Mcal(\Omega;V)$ satisfies
		\[
		\Acal \mu = \sigma \qquad \text{in $\Dcal'(\Omega;W)$}
		\] 
		for some $\sigma \in \Mcal(\Omega;W)$, and 
		\begin{equation*}
			\frac{\di \mu}{\di |\mu|}(x) \in \Ccal\qquad \text{for $|\mu|$-almost every $x \in \Omega$}
		\end{equation*}
		where $\Ccal \subset V$ is a convex set satisfying
		\begin{equation*}
			\dist\left(\Kcal \cap S_V,L\right) \le  \eps,
		\end{equation*}
		then $\mu \in \Lrm^p(\Omega';V)$ and
		\begin{equation}\label{e:Lp-est}
			\|\mu\|_{\Lrm^p(\Omega')} \le C \big( |\mu|(\Omega)+ |\sigma|(\Omega)\big).
		\end{equation}
	for some constant $C$ depending solely on  and $d,\Acal, L,\Omega'$ and $\dist(\Omega',\partial \Omega)$.
	\end{theorem}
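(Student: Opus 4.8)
The plan is to combine a regularisation argument with two nested iterations: a Sobolev‑type bootstrap on the integrability exponent, and, at each fixed exponent, a Widman--Giaquinta absorption iteration over nested subdomains.

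\emph{Preliminaries.} Since $\Lambda_\Acal$ is a closed cone and $L\cap\Lambda_\Acal=\{0_V\}$, the continuous map $(v,\xi)\mapsto|\Abb(\xi)v|$ is strictly positive on the compact set $(L\cap S_V)\times S^{d-1}$; combined with \eqref{eqn:A_elliptic} this yields, for some $\eps_0>0$, the uniform ellipticity $|\Abb(\xi)v|\gtrsim|\xi|^k|v|$ for all $\xi\in\R^d$ and all $v$ in the $\eps_0$‑cone $L_{\eps_0}:=\set{v\in V}{\dist(v,L)\le\eps_0|v|}$; in particular $\Abb(\xi)|_L\colon L\to W$ is injective for $\xi\neq0$. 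I then introduce the Fourier multiplier $\Trm:=M(D)$ with symbol $M(\xi):=\bigl(\Abb(\xi)|_L^{*}\Abb(\xi)|_L\bigr)^{-1}\Abb(\xi)|_L^{*}$, which is smooth and $(-k)$‑homogeneous away from the origin and satisfies $M(\xi)\Abb(\xi)|_L=\Id_L$. Thus $\Trm$ inverts $\Acal$ on compactly supported $L$‑valued functions, it is smoothing of order $k$ (for $k<d$ its kernel is $\lesssim|x|^{k-d}$), it sends compactly supported measures into $\Lrm^q_{\loc}$ for every $q<d/(d-k)$, and $\Trm\partial^\gamma$ maps $\Lrm^a$ into $\Lrm^b$ with $\tfrac1b=\tfrac1a-\tfrac{k-|\gamma|}{d}$ whenever $|\gamma|<k$ and $a>1$. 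Finally $\Srm:=\Trm\circ\Acal$ has the smooth $0$‑homogeneous symbol $M(\xi)\Abb(\xi)$, so by Mikhlin--H\"ormander it is bounded on $\Lrm^q(\R^d)$ for $1<q<\infty$ with norm $C_q=C_q(d,\Acal,L)$.

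\emph{Reduction and local identity.} Write $P,\,P^\perp=\Id-P$ for the orthogonal projections of $V$ onto $L,\,L^\perp$; the cone condition gives $|P^\perp\mu|\le c(\eps)|\mu|$ as measures with $c(\eps)\to0$ as $\eps\to0$. As $\Kcal$ is a convex cone contained in $L_{\eps_0}$, a positive average of its vectors again lies in it, so the mollifications $\mu_\delta:=\mu\ast\rho_\delta$ satisfy $\tfrac{\di\mu_\delta}{\di|\mu_\delta|}\in\Kcal$, solve $\Acal\mu_\delta=\sigma_\delta:=\sigma\ast\rho_\delta$ near $\Omega$, and are smooth (hence in $\Lrm^q_{\loc}$ for every $q$); it therefore suffices to bound $\|\mu_\delta\|_{\Lrm^p(\Omega')}$ by $|\mu|(\Omega)+|\sigma|(\Omega)$ uniformly in $\delta$ and let $\delta\to0$. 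Set $4\rho:=\dist(\Omega',\partial\Omega)$ and $\Omega_t:=\set{x}{\dist(x,\Omega')<t\rho}$ for $t\in[0,2]$, so that $\Omega'=\Omega_0\subset\Omega_t\subset\Omega_2\Subset\Omega$. For $0\le t<s\le2$ take $\chi\in\Crm^\infty_c(\Omega_s)$ with $\chi\equiv1$ on $\Omega_t$ and $|\partial^\beta\chi|\lesssim(s-t)^{-|\beta|}$. From $\Acal(\chi P\mu_\delta)=\chi\sigma_\delta-\Acal(\chi P^\perp\mu_\delta)+[\Acal,\chi]\mu_\delta$ and $\Trm\Acal(\chi P\mu_\delta)=\chi P\mu_\delta$, applying $\Trm$ and adding $P^\perp\mu_\delta$ yields on $\Omega_t$
\[
\mu_\delta=\Trm(\chi\sigma_\delta)+\bigl(\chi P^\perp\mu_\delta-\Srm(\chi P^\perp\mu_\delta)\bigr)+\Trm\bigl([\Acal,\chi]\mu_\delta\bigr),
\]
where $[\Acal,\chi]\mu_\delta=\sum_{0\le|\gamma|\le k-1}\partial^\gamma(b_\gamma\mu_\delta)$ with $b_\gamma\in\Crm^\infty_c$ supported in $\supp\nabla\chi$ and $|b_\gamma|\lesssim(s-t)^{-(k-|\gamma|)}$.

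\emph{Bootstrap with absorption.} I would then prove, by induction along a chain $1=q_0<q_1<\dots<q_N=p$ with $\tfrac1{q_j}\ge\tfrac1{q_{j-1}}-\tfrac1d$ (and $\tfrac1{q_1}>\tfrac1{q_0}-\tfrac1d$, whence $N\le k$ suffices to reach any $p<d/(d-k)$), that $\|\mu_\delta\|_{\Lrm^{q_j}(U)}\le C(|\mu|(\Omega)+|\sigma|(\Omega))$ for every $U\Subset\Omega$, with $C$ independent of $\delta$. The case $j=0$ is trivial. For the step $j-1\to j$, estimate the three terms above in $\Lrm^{q_j}(\Omega_t)$: $\|\Trm(\chi\sigma_\delta)\|_{\Lrm^{q_j}(\Omega_t)}\lesssim|\sigma|(\Omega)$ since $q_j\le p<d/(d-k)$; $\|\Trm([\Acal,\chi]\mu_\delta)\|_{\Lrm^{q_j}(\Omega_t)}\lesssim(s-t)^{-k}\|\mu_\delta\|_{\Lrm^{q_{j-1}}(\Omega_2)}\lesssim(s-t)^{-k}(|\mu|(\Omega)+|\sigma|(\Omega))$, because $\Trm\partial^\gamma$ gains $(k-|\gamma|)/d\ge1/d$ in integrability and the worst term $|\gamma|=k-1$ exactly lands in $\Lrm^{q_j}$ by the choice of $q_j$ (via the weak‑type bound at $j=1$, which is why $q_1<d/(d-1)$ strictly), the measure mass being controlled by the inductive hypothesis; and $\|\chi P^\perp\mu_\delta-\Srm(\chi P^\perp\mu_\delta)\|_{\Lrm^{q_j}(\Omega_t)}\le(1+C_{q_j})c(\eps)\|\mu_\delta\|_{\Lrm^{q_j}(\Omega_s)}$. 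Writing $\phi(t):=\|\mu_\delta\|_{\Lrm^{q_j}(\Omega_t)}$ (finite, as $\mu_\delta$ is smooth), this reads
\[
\phi(t)\le(1+C_{q_j})c(\eps)\,\phi(s)+\frac{C}{(s-t)^{k}}\bigl(|\mu|(\Omega)+|\sigma|(\Omega)\bigr),\qquad 0\le t<s\le2 .
\]
Choosing $\eps=\eps(p,d,\Acal,L)$ so small that $(1+C_{q_j})c(\eps)<1$ for all $j\le N$, the standard iteration lemma gives $\phi(0)=\|\mu_\delta\|_{\Lrm^{q_j}(\Omega')}\lesssim|\mu|(\Omega)+|\sigma|(\Omega)$; since $\Omega'$ was arbitrary the induction proceeds to $j=N$.

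\emph{Conclusion and main difficulty.} Letting $\delta\to0$, $\mu_\delta\toweakstar\mu$ locally, and the uniform $\Lrm^p(\Omega')$‑bound together with reflexivity of $\Lrm^p$ (for $p>1$) forces $\mu\in\Lrm^p(\Omega';V)$ with \eqref{e:Lp-est}; the case $p=1$ follows from any $p\in(1,d/(d-k))$ because $\Omega'$ is bounded, and the case $k\ge d$ is identical but easier, since there $\Trm$ is smoothing of order $\ge d$ and maps compactly supported measures into $\Lrm^q_{\loc}$ for every finite $q$, so the bootstrap is unnecessary. I expect the crux to be the middle term $\chi P^\perp\mu_\delta-\Srm(\chi P^\perp\mu_\delta)$: since $\Srm$ is a genuine Calder\'on--Zygmund operator it provides no smoothing, only the smallness factor $c(\eps)$, so it cannot feed the integrability bootstrap and must instead be absorbed into the left‑hand side at each fixed exponent — which is exactly why both the domain iteration and the preliminary mollification (needed so that $\phi$ is a priori finite) are unavoidable, and why $\eps$ must be calibrated to the operator norms $C_{q_j}$ of $\Srm$, i.e.\ to $p,d,\Acal,L$. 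The sharp range $p<d/(d-k)$ reflects that $[\Acal,\chi]$ has order $k-1$, so each bootstrap step recovers exactly one derivative and $k$ steps carry $\Lrm^1$ to just below $\Lrm^{d/(d-k)}$.
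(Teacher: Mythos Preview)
Your proof is correct and follows a genuinely different route from the paper's, though both share the same scaffolding: mollification (exploiting the convexity of $\Kcal$) to reduce to smooth maps, decomposition of $\mu$ into its $L$- and $L^\perp$-components with the latter of size $O(\eps)$, and a bootstrap of the integrability exponent driven by the fact that the commutator $[\Acal,\chi]$ has order $k-1$. The divergence lies in how the small $L^\perp$-perturbation is absorbed. You build an explicit one-sided parametrix $\Trm$ for $\Acal|_L$, write a representation formula for $\mu_\delta$ on each subdomain, and absorb the non-smoothing Calder\'on--Zygmund piece $(\Id-\Srm)(\chi P^\perp\mu_\delta)$ via a Widman--Giaquinta domain iteration at each fixed exponent $q_j$; the smallness threshold on $\eps$ is tied directly to the Mihlin norms $C_{q_j}$ of $\Srm$. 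The paper instead recasts the system as a perturbed elliptic equation: with $\Bcal:=\Acal\circ\pi_L$ it writes $(\Id+\triangle_\Bcal+\Rcal)(\pi_L\mu)=\pi_L\mu+\Bcal^*\gamma$, where $\Rcal=\Bcal^*\Acal M$ is a $2k$-th order operator with $\Lrm^\infty$-coefficients of size $O(\eps)$, then solves the \emph{adjoint} problem $(\Id+\triangle_\Bcal+\Rcal^*)\eta=f$ by a Neumann series (so the absorption happens at the level of operator inverses), and pairs to bound $\|\pi_L\mu\|_{\Lrm^{q(\ell)}}$ by duality. Your approach is more direct and makes transparent that $\eps$ depends only on $p,d,\Acal,L$; the paper's duality argument trades the inner domain iteration for a single nested chain $\Omega'=\Omega_k\Subset\cdots\Subset\Omega_0=\Omega$ across which only the exponent bootstrap runs. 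One small caveat: for $k\ge d$ your symbol $M(\xi)$ is not locally integrable near $\xi=0$, so $\Trm(\chi\sigma_\delta)$ as written needs a low-frequency cutoff (the error being smoothing and harmless); the paper avoids this by working with $\Id+\triangle_\Bcal$ rather than $\triangle_\Bcal$ alone.
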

	
	Similar to classical elliptic regularity, functions that solve the homogeneous PDE problem satisfy optimal integrability estimates (even in the case when the order of the operator is smaller than the space dimension):
		
	
	\begin{corollary}[Higher integrability for $\Acal$-free measures]\label{rem:Afree}
		If $\Acal$ and $\mu \in \Mcal(\Omega;V)$ are as in the previous theorem and
		\[
		\Acal \mu = 0 \qquad \text{in $\Dcal'(\Omega;W)$,}
		\] 
		then the estimates above hold in the range 
		$p \in [1,\infty)$ (for possibly smaller constants $\eps$ and $C$ with the same parameter dependencies), even in the case $k < d$.
	\end{corollary}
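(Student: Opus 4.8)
The plan is to treat Theorem~\ref{thm:Lp} as a starting point and then bootstrap the exponent: in that theorem the ceiling $d/(d-k)$ is produced by a term built from the source measure $\sigma$, which is absent here, so one can iterate a fixed gain in integrability up the whole scale. (For $k\ge d$ the statement is already Theorem~\ref{thm:Lp}, so assume $k<d$.)

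First, pass to the scale-invariant form of the base estimate. For $\Acal\mu=0$ all hypotheses of Theorem~\ref{thm:Lp} are invariant under $\mu\mapsto\mu(x_0+r\,\cdot)$ --- the constraint is $k$-homogeneous, the cone condition on the polar is scale-free, and no source is created --- so rescaling \eqref{e:Lp-est} gives, for every fixed $p_0\in[1,d/(d-k))$ and every ball $B_{2r}(x_0)\Subset\Omega$,
\[
\Big(\dashint_{B_r(x_0)}|\mu|^{p_0}\,\di x\Big)^{1/p_0}\le C_0\dashint_{B_{2r}(x_0)}|\mu|\,\di x,\qquad C_0=C_0(d,\Acal,L,p_0),
\]
and in particular $\mu\in\Lrm^{p_0}_{\loc}(\Omega;V)$. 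The essential point is that $\mu$ is now a genuine $\Lrm^{p_0}$-function, so Calder\'on--Zygmund theory (useless on $\Lrm^1$) becomes available.

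Second --- the crux --- establish the a priori implication: \emph{if $\mu$ is $\Acal$-free, cone-valued and already lies in $\Lrm^s_{\loc}(\Omega)$, then $\mu\in\Lrm^{s^*}_{\loc}(\Omega)$}, with a quantitative estimate on nested subdomains, where $s^*=ds/(d-s)$ if $s<d$ and $s^*$ is any finite exponent once $s\ge d$. To prove it, regularize $\mu_\delta:=\mu\conv\rho_\delta$: by convexity of $\Kcal$ --- exactly the role the introduction assigns to it --- each $\mu_\delta$ is smooth, $\Acal$-free, still cone-valued, with $|\pi_{L^\perp}\mu_\delta|\le\eps|\mu_\delta|$ pointwise. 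Localize by a cutoff $\chi\equiv1$ on $\Omega'$, $\supp\chi\Subset\Omega''\Subset\Omega$, so $\Acal(\chi\mu_\delta)=[\Acal,\chi]\mu_\delta$ with $[\Acal,\chi]$ of order $k-1$; splitting $\widehat{\chi\mu_\delta}$ into its $L$- and $L^\perp$-parts and using that $\Abb(\xi)|_L$ is injective with $|\Abb(\xi)v|\ge c|\xi|^k|v|$ for $v\in L$ (from \eqref{eqn:A_elliptic} and compactness of $S_V\cap L$, \emph{without} any constant-rank hypothesis) yields a Fourier-multiplier identity
\[
\chi\mu_\delta=\mathcal{I}\big([\Acal,\chi]\mu_\delta\big)+\mathcal{R}\big(\pi_{L^\perp}(\chi\mu_\delta)\big),
\]
where $\mathcal{I}$ has the smooth $(-k)$-homogeneous symbol $\Abb(\cdot)|_L^{-1}$, so that $\mathcal{I}\circ[\Acal,\chi]$ gains one derivative (hence $\mathcal{I}([\Acal,\chi]\mu_\delta)\in W^{1,s}_{\loc}\hookrightarrow\Lrm^{s^*}_{\loc}$, with norm $\lesssim\|\mu_\delta\|_{\Lrm^s(\Omega'')}$), and $\mathcal{R}$ is a $0$-order Calder\'on--Zygmund operator. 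As $\mu_\delta$ is smooth, $\|\chi\mu_\delta\|_{\Lrm^{s^*}}<\infty$, so for $\eps$ small relative to $\|\mathcal{R}\|_{\Lrm^{s^*}\to\Lrm^{s^*}}$ the last term absorbs, leaving $\|\chi\mu_\delta\|_{\Lrm^{s^*}}\le C\|\mu\|_{\Lrm^s(\Omega'')}$ uniformly in $\delta$; letting $\delta\downarrow0$ gives $\mu\in\Lrm^{s^*}_{\loc}(\Omega')$ with the estimate. (In the inhomogeneous Theorem~\ref{thm:Lp} this identity also carries $\mathcal{I}\sigma$, a $k$-th order Riesz potential of a measure, which lies only in weak-$\Lrm^{d/(d-k)}$ --- precisely the ceiling on $p$ --- and which vanishes when $\sigma=0$.)

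Finally, iterate: with $s_0:=p_0$ and $s_{j+1}:=s_j^*$, the exponents increase by a fixed Sobolev step while $s_j<d$, so $s_J\ge d$ after finitely many steps $J=J(d,k)$; one further application of the step --- now with $\eps$ small relative to $\|\mathcal{R}\|_{\Lrm^p\to\Lrm^p}$ for the target exponent $p$ --- lands $\mu$ in $W^{1,s_J}_{\loc}(\Omega')\hookrightarrow\Lrm^p(\Omega')$. Only finitely many iterations occur, so the smallness threshold for $\eps$ is the minimum of the finitely many thresholds (the last depending on $p$) and the final constant is a finite product of constants of the type permitted by Theorem~\ref{thm:Lp}, the localizations along a finite chain $\Omega'\Subset\cdots\Subset\Omega$ producing the dependence on $\dist(\Omega',\partial\Omega)$. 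I expect the delicate points to be technical rather than conceptual --- assembling the multiplier calculus for $\mathcal{I},\mathcal{R}$ with the correct localizations, and, above all, making the absorption step legitimate, which is why one regularizes first and why convexity of $\Kcal$ (which lets mollification preserve the cone constraint) is indispensable.
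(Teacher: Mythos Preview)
Your bootstrapping argument is correct, but the paper takes a much shorter and rather different route. Instead of iterating a Sobolev-step gain $s\mapsto s^*$ through a chain of subdomains, the paper observes that one can simply \emph{replace the operator}: setting $\Delta^r_\Acal$ to be the operator with symbol $[\Abb(\xi)^*\Abb(\xi)]^{2^{r-1}}$, one has (i) every $\Acal$-free measure is $\Delta^r_\Acal$-free, (ii) $\ker[\Abb(\xi)^*\Abb(\xi)]^{2^{r-1}}=\ker\Abb(\xi)$, so $\Lambda_{\Delta^r_\Acal}=\Lambda_\Acal$ and the hypothesis $L\cap\Lambda_\Acal=\{0\}$ is preserved, and (iii) the order of $\Delta^r_\Acal$ is $2^rk$, which exceeds $d$ once $r>\log_2(d/k)$. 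A single application of Theorem~\ref{thm:Lp} to $\Delta^r_\Acal$ (now in the regime ``order $\ge d$'') then gives the full range $p\in[1,\infty)$ directly.

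Your approach is essentially the a~priori mechanism hinted at in Remark~\ref{rem:a_priori}: once $\mu\in\Lrm^s_\loc$, the localized multiplier identity you write down (with $\mathcal{I}$ built from the left inverse of $\Abb(\xi)|_L$ and the $0$-homogeneous remainder $\mathcal{R}$ acting on $\pi_{L^\perp}(\chi\mu_\delta)$) lets one climb one Sobolev step, and the absence of $\sigma$ removes the ceiling. This is more explicit about \emph{why} the barrier $d/(d-k)$ disappears when $\sigma=0$ (the Riesz-potential term $\mathcal{I}\sigma$ simply isn't there), and it never leaves the original operator $\Acal$. The cost is a finite chain of localizations and a finite collection of smallness thresholds on~$\eps$ to track, whereas the paper's trick collapses everything into one black-box invocation of the main theorem. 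Note also that the scale-invariant reformulation in your first step is not actually needed for the iteration; Theorem~\ref{thm:Lp} already gives $\mu\in\Lrm^{p_0}_\loc$ to start from.
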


	To illustrate this result, we can record the following example, which follows immediately from the fact that the wave cone for the divergence operator consists of all singular matrices (see also Propositions~\ref{prop:grad}-\ref{prop:sgrad} on related results for the gradient and symmetric gradient operators).
	
	\begin{example}\label{e:div} Let $d \ge 2$.
		If  $\mu \in \Mcal(\Omega;\R^{d \times d})$ has divergence-free rows, i.e.,
		\[
		\mathrm{Div} (\mu) \coloneqq \biggl(\sum_{j = 1}^d \partial_j \mu_j^i \biggr)_i = 0, \qquad  i = 1,\dots,d,
		\]
		and its polar $\frac{\di \mu}{\di |\mu|}$ is sufficiently close to $\{I_n\}$ where $I_n$ is the identity matrix, then $\mu$ is locally $\Lrm^p$-integrable for every $1 < p< \infty$. Notice that under this constraint, one can think of the divergence of $\mu$ as a perturbation of the gradient since $\mathrm{Div} ( I_n \rho ) = D\rho$ for all $\rho : \R^n \to \R$.
	\end{example}

	%

	In general, one can not expect Theorem \ref{thm:Lp} to hold for the limiting exponent $p = d/(d-k)$. In order to establish a limiting estimate for $p = d/(d-k)$ (when $k < d$), we need to require further that $\Acal$ is a canceling operator as defined by Van~Schaftingen~\cite[Definition~1.3]{VanSchaftingen13} and that its symbol satisfies the constant-rank property. Then we obtain the following:

	\begin{theorem}[Limiting estimate]\label{thm:limiting}Let $\Acal, L$ be as in the previous theorem and  assume additionally that $\Acal$ is of \emph{constant rank}, i.e.,
		\[
		\rank \Abb(\xi) = \mathrm{const}  \qquad \text{for all $\xi \in \R^d \setminus \{0\}$,}
		\]
		and \emph{canceling}, i.e.,
		\[
		\bigcap_{|\xi| = 1} \im \Abb(\xi) = \{0_W\}. 
		\]
		If $k < d$, then for every compactly contained subdomains $\Omega' \Subset \Omega$  there exists a positive constant $\eps = \eps(d,\Acal,L,\Omega')$ with the following property: If $\mu \in \Mcal(\Omega;V)$ satisfies
		\[
		\Acal \mu = \sigma \qquad \text{in $\Dcal'(\Omega;W)$}
		\]
		for some $\sigma \in \Mcal(\Omega;W)$, and 
		\begin{equation*}
			\frac{\di \mu}{\di |\mu|}(x) \in \Ccal \qquad \text{for $|\mu|$-almost every $x \in \Omega$,}
		\end{equation*}
		where $\Ccal \subset V$ is a convex set such that  
		\begin{equation*}
			\dist\left(\Kcal \cap S_V,L\right) \le  \eps,
		\end{equation*}
		then $\mu  \in \Lrm^{{d}/{(d-k)}}(\Omega';V)$ and 
		\[
		\|\mu\|_{\Lrm^{\frac{d}{d-k}}(\Omega')} \le C \big( |\mu|(\Omega)+ |\sigma|(\Omega)\big).
		\]
		for some constant $C$ depending solely on $d,\Acal,L,\Omega'$ and $\dist(\Omega',\partial \Omega)$.
		%
	\end{theorem}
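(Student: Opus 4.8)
The plan is to rerun the proof of Theorem~\ref{thm:Lp} at the endpoint exponent $q=d/(d-k)$, isolating the single place where subcriticality was used — an Adams-type Riesz-potential bound for the datum $\sigma$ — and replacing it by Van~Schaftingen's endpoint inequality, which the cancellation hypothesis makes available.

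First I would carry out the same reductions as in Theorem~\ref{thm:Lp}. Fix Lipschitz domains $\Omega'\Subset\Omega''\Subset\Omega$, a cut-off $\chi\in\Crm^\infty_c(\Omega'')$ with $0\le\chi\le 1$ and $\chi\equiv 1$ near $\cl{\Omega'}$, a standard mollifier $\rho_\delta$, and set $\mu_\delta:=(\chi\mu)\conv\rho_\delta$, a smooth compactly supported map. Since $\Kcal$ is convex and $\chi\ge 0$, at each point $\mu_\delta(x)$ is a nonnegative multiple of a point of $\cl{\co}(\Kcal\cap S_V)$, hence lies in the $\eps$-cone about $L$; with $P\colon V\to L$ the orthogonal projection this gives the pointwise bound $\lvert(\Id-P)\mu_\delta\rvert=\dist(\mu_\delta,L)\le\eps\lvert\mu_\delta\rvert$ on $\R^d$. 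Shrinking $\eps$ so that Theorem~\ref{thm:Lp} applies on $\Omega''$, we may also fix $p_0\in(d/(d-k+1),\,d/(d-k))$ and record the subcritical bound $\mu\in\Lrm^{p_0}_\loc(\Omega;V)$ with $\|\mu\|_{\Lrm^{p_0}(\Omega'')}\le C(\lvert\mu\rvert(\Omega)+\lvert\sigma\rvert(\Omega))$; this will absorb all error terms.

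The heart is an interior estimate proved by the absorption scheme of Theorem~\ref{thm:Lp}, now legitimate at $q=d/(d-k)$ because it only uses Calder\'on--Zygmund operators. Constant rank guarantees that the projection $\Pbb(\xi)$ onto $\ker\Abb(\xi)$ and the pseudo-inverse $\Abb(\xi)^\dagger$ are smooth off the origin and $0$-, resp.\ $(-k)$-homogeneous, so any order-$0$ multiplier assembled from them is Calder\'on--Zygmund, bounded on $\Lrm^q$ for every $1<q<\infty$. The transversality~\eqref{eq:connections} makes $\Acal|_L\colon L\to W$ injective elliptic of order $k$, hence equipped with a homogeneous left parametrix $E(\xi)$ ($(-k)$-homogeneous, smooth off the origin, $E(\xi)\Abb(\xi)|_{L}=\Id_L$); since $E(D)\Acal$ is then a Calder\'on--Zygmund operator, applying $E(D)$ to the identity $\Acal(P\mu_\delta)=\Acal\mu_\delta-\Acal((\Id-P)\mu_\delta)$, using $P\mu_\delta=E(D)\,\Acal(P\mu_\delta)$ and the pointwise smallness of $(\Id-P)\mu_\delta$, and absorbing the resulting $C\eps\|\mu_\delta\|_{\Lrm^{d/(d-k)}}$ term (allowed because $\mu_\delta$ is smooth and compactly supported, so this norm is a priori finite) yields, for $\eps$ small,
\[
\|\mu_\delta\|_{\Lrm^{d/(d-k)}(\R^d)}\le C\,\big\|E(D)\,\Acal\mu_\delta\big\|_{\Lrm^{d/(d-k)}(\R^d)}.
\]
Now $\Acal\mu_\delta=(\chi\sigma)\conv\rho_\delta+([\Acal,\chi]\mu)\conv\rho_\delta$. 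The commutator carries at most $k-1$ derivatives of $\mu$, so $[\Acal,\chi]\mu\in\Wrm^{-(k-1),p_0}$; after the order-$k$ operator $E(D)$ it lies in $\Wrm^{1,p_0}\embed\Lrm^{dp_0/(d-p_0)}\subseteq\Lrm^{d/(d-k)}$ with norm $\le C\|\mu\|_{\Lrm^{p_0}(\Omega'')}$ uniformly in $\delta$ (this is why $p_0>d/(d-k+1)$ is imposed). For the genuine datum one uses that $\Acal\mu_\delta$ is a full $\Acal$-gradient, so $\widehat{\Acal\mu_\delta}(\xi)\in\im\Abb(\xi)$ for all $\xi$; since $\Acal$ is canceling and constant-rank, $\Acal|_L$ is elliptic \emph{and} canceling (as $\im(\Abb(\xi)|_{L})\subseteq\im\Abb(\xi)$ forces $\bigcap_{|\xi|=1}\im(\Abb(\xi)|_{L})=\{0_W\}$), and Van~Schaftingen's endpoint inequality \cite{VanSchaftingen13} for $\Acal|_L$, applied to the datum via this compatibility constraint, provides the strong bound $\lesssim\lvert\sigma\rvert(\Omega)$ in place of the Adams-type bound $\|\mathrm{I}_k\lvert\sigma\rvert\|_{\Lrm^q}\lesssim\lvert\sigma\rvert(\Omega)$ ($q<d/(d-k)$) used in Theorem~\ref{thm:Lp}, which degenerates at the endpoint. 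Combining and letting $\delta\to0$ then gives $\mu\in\Lrm^{d/(d-k)}(\Omega';V)$ with the asserted estimate.

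\emph{Main obstacle.} The genuinely delicate step is the endpoint estimate for the forcing: it is $\Lrm^1$-flavoured, so there is no slack for absorption, and the compensation $\widehat\sigma(\xi)\in\im\Abb(\xi)$ that drives Van~Schaftingen's inequality is non-local and is \emph{not} inherited by $\chi\sigma$ — it is $\Acal(\chi\mu)$, not $\chi\sigma$, that is a true $\Acal$-gradient, while the commutator part of $\Acal(\chi\mu)$ fails to be a measure and its $\Lrm^1$-norm blows up under mollification. Reconciling this requires either a version of the Van~Schaftingen/cocanceling estimate stable for data in $\Mcal+\Wrm^{-(k-1),p_0}$ with only the sum compatible (splitting off the negative-order piece and absorbing it through Theorem~\ref{thm:Lp}), or an extension argument producing $\bar\mu\in\Mcal(\R^d;V)$ with $\bar\mu=\mu$ near $\Omega'$, $\Acal\bar\mu\in\Mcal(\R^d;W)$, and $\lvert\bar\mu\rvert(\R^d)+\lvert\Acal\bar\mu\rvert(\R^d)\lesssim\lvert\mu\rvert(\Omega)+\lvert\sigma\rvert(\Omega)$, turning the datum into a globally defined compatible measure to which the endpoint estimate applies directly after mollification. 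I expect this reconciliation of the local, measure-theoretic setting with the non-local compensation structure to be the principal difficulty; the constant-rank and cancellation hypotheses enter precisely to supply, respectively, the Calder\'on--Zygmund calculus and the endpoint inequality.
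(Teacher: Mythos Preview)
Your strategy is the paper's: rerun the absorption/bootstrap of Theorem~\ref{thm:Lp} at the endpoint $q=d/(d-1)$ (so that $q(k-1)=d/(d-k)$), using the subcritical result to control all commutator terms, and replacing the one subcritical Riesz-potential bound on the datum by a Van~Schaftingen-type estimate. Your parametrix framework via $E(D)$ for $\Acal|_L$ is a cosmetic variant of the paper's duality scheme with $\triangle_\Bcal$; once one knows that the localized datum lies in $\Wrm^{-1,d/(d-1)}$, both routes close identically.

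You have also pinpointed the genuine difficulty: the compatibility constraint $\widehat\sigma(\xi)\in\im\Abb(\xi)$ is not inherited by $\chi\sigma$, while the commutator $[\Acal,\chi]\mu$ is not a measure, so neither the elliptic--canceling form of Van~Schaftingen (applied to $P\mu_\delta$) nor the cocanceling form (applied to $\Acal\mu_\delta$) can be invoked with uniform $\Lrm^1$-control. The paper resolves this exactly along your ``option~2'' (extension), and this is where the constant-rank hypothesis does real work beyond Calder\'on--Zygmund calculus: one takes the $\Abb$-representative $w_\Abb:=K_\Abb\star\Acal(\phi_2\mu)$ (well-defined because $\Abb(\xi)^\dagger$ is smooth off the origin under constant rank), proves via kernel estimates that $D^{k-1}w_\Abb\in\Lrm^q_\loc$ for $q<d/(d-1)$ with bounds $\lesssim|\mu|+|\sigma|$ (Lemma~\ref{lem:regularity}), and then sets $v:=\phi_1 w_\Abb$. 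A direct computation shows $\Acal v=\phi_1\sigma+[\Acal,\phi_1](w_\Abb)$, which \emph{is} a globally defined compactly supported measure (the commutator now hits the regular object $w_\Abb$, not $\mu$), with total variation $\lesssim|\mu|+|\sigma|$ and equal to $\sigma$ on $\Omega'$. Since $\Acal$ has an exact annihilator $\Lcal$ (again by constant rank) which is cocanceling (because $\Acal$ is canceling), the global cocanceling estimate applied to $\Acal v$ yields $\sigma\in\Wrm^{-1,d/(d-1)}(\Omega')$ with the right bound (Theorem~\ref{thm:localVS}). This is precisely the local Van~Schaftingen input you were looking for; your observation that $\Acal|_L$ is itself canceling is correct but not used---the paper works with the full operator and its annihilator instead.
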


	\begin{remark}[Results for inhomogeneous PDEs]
		It is worth mentioning that in the above theorems, there is no obstruction to taking $\Acal$ to be an \emph{inhomogeneous} PDE operator instead, other than the fact that the range of $p$ for which the estimates in Theorem~\ref{thm:Lp} and Theorem~\ref{thm:limiting} would be reduced to $\big[1, \frac{d}{d-k'}\big)$ and $\big[1,\frac{\ell}{\ell-k'}\big)$ respectively, where $$k' :=  k - \max \setn{l < k}{\Acal \ \text{has a derivative of order} \ l}.$$
		This is due to the mapping properties of the multiplier operators associated to the lower-order terms that would arise in the proofs.
	\end{remark}

	Theorem~\ref{thm:Lp} implies the following \emph{compensated compactness} result:
	
	\begin{corollary} \label{cor:CC}
		Let $(\mu_j) \subset \Mcal(\Omega;V)$ satisfy
		\[
		\Acal\mu_j = \sigma_j \quad\text{in $\Dcal'(\Omega;W)$}
		\]
		and
		\[
		\sup_{j \in \Nbb} \, \bigl\{\abs{\mu_j}(\Omega) + \abs{\sigma_j}(\Omega) \bigr\}< \infty.
		\] 
		Let $p$, $\Omega'$ and  $\eps$ be as in Theorem~\ref{thm:Lp}. Further assume that  \begin{equation*}
			\frac{\di \mu}{\di |\mu|}(x) \in \Ccal \qquad \text{for $|\mu|$-almost every $x \in \Omega$,}
		\end{equation*}
		for some convex set $\Ccal \subset V$ such that  
		\begin{equation*}
			\dist\left(\Kcal \cap S_V,L\right) \le  \eps,
		\end{equation*}
		Then, $(\mu_j) \subset \Lrm^p(\Omega';V)$ and
		\[
		\text{$\bigl\{|\mu_j|^q\bigr\}_j$ is equiintegrable on $\Omega'$ for all $1  \le q < p$.}
		\]
		Moreover, 
		\[
		\mu_j \toweakstar \mu \text{ as measures} \qquad \text{implies} \qquad \mu_j \toweak \mu \text{ in $\Lrm^q(\Omega';V)$},
		\]
		and
		\[
		\left\{ \begin{aligned}
			&\mu_j \to \mu \text{ in measure (as maps), or}\\
			&\mu_j \to \mu \text{ a.e.\ (as maps)}
		\end{aligned} \right\}
		\qquad\text{implies}\qquad
		\mu_j \to \mu \text{ in $\Lrm^q(\Omega';V)$}.
		\]
	\end{corollary}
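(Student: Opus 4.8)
The plan is to deduce everything from the uniform higher-integrability bound of Theorem~\ref{thm:Lp}, after which only soft functional analysis remains. The first point to record is that the threshold $\eps = \eps(p,d,\Acal,L,\Omega')$ produced by Theorem~\ref{thm:Lp} depends only on $p,d,\Acal,L,\Omega'$ and not on the individual measure, so its hypotheses are met simultaneously by every member of the sequence $(\mu_j)$ (each $\mu_j$ has polar in the same convex set $\Ccal$ with $\dist(\Ccal\cap S_V,L)\le\eps$). Applying the theorem to each $\mu_j$ yields $\mu_j\in\Lrm^p(\Omega';V)$ together with
\[
\sup_j \|\mu_j\|_{\Lrm^p(\Omega')} \le C\,\sup_j\big(|\mu_j|(\Omega)+|\sigma_j|(\Omega)\big) =: CM < \infty,
\]
so that $(\mu_j)$ is bounded in the reflexive space $\Lrm^p(\Omega';V)$ with $p>1$. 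From this, equiintegrability of $\{|\mu_j|^q\}_j$ for $1\le q<p$ follows at once from Hölder's inequality: for any Borel $E\subseteq\Omega'$ one has $\int_E|\mu_j|^q\le|E|^{1-q/p}(CM)^q$, which tends to $0$ uniformly in $j$ as $|E|\to0$, while $\Lrm^1$-boundedness is immediate since $\Omega'$ is bounded.

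For the weak-convergence statement I would use the subsequence principle. Assume $\mu_j\toweakstar\mu$ as measures. By reflexivity, any subsequence of $(\mu_j)$ has a further subsequence converging weakly in $\Lrm^p(\Omega';V)$ to some $\nu\in\Lrm^p(\Omega';V)$; testing this weak convergence against $\phi\in\Crm_c(\Omega';V)\subset\Lrm^{p'}(\Omega';V)$ and comparing with the weak* convergence $\mu_j\toweakstar\mu$ forces $\nu\,\Lcal^d=\mu\mres\Omega'$. In particular $\mu\mres\Omega'$ is absolutely continuous with $\Lrm^p$-density and the weak limit is independent of the chosen subsequence, so the whole sequence $(\mu_j)$ converges weakly to $\mu$ in $\Lrm^p(\Omega';V)$. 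Since $\Omega'$ is bounded, $q<p$ gives $\Lrm^{q'}(\Omega')\embed\Lrm^{p'}(\Omega')$, and the convergence automatically improves to weak convergence in $\Lrm^q(\Omega';V)$.

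For the strong-convergence statement I would first observe that a.e.\ convergence of $(\mu_j)$ on the bounded set $\Omega'$ implies convergence in measure, so in both listed cases $\mu_j\to\mu$ in measure on $\Omega'$ (and Fatou's lemma together with the uniform bound places $\mu$ in $\Lrm^p(\Omega';V)$, consistently with the first part). Combining convergence in measure with the equiintegrability of $\{|\mu_j|^q\}_j$ obtained above, Vitali's convergence theorem yields $\mu_j\to\mu$ in $\Lrm^q(\Omega';V)$ for every $1\le q<p$. I do not expect a genuine obstacle in any of this: all the real work is contained in Theorem~\ref{thm:Lp}, and the only point that needs care is keeping the various modes of convergence straight — weak* in $\Mcal(\Omega;V)$, weak in $\Lrm^q(\Omega';V)$, and convergence in measure — and correctly identifying the weak $\Lrm^p$-limit with the prescribed weak* limit $\mu$.
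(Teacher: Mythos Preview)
Your proposal is correct and follows essentially the same approach as the paper: invoke Theorem~\ref{thm:Lp} for the uniform $\Lrm^p$-bound, deduce $\Lrm^q$-equiintegrability via H\"older, identify the weak $\Lrm^q$-limit with the weak* measure limit, and conclude the strong convergence via Vitali. The paper's own proof is a three-sentence sketch of precisely these steps; your write-up simply fills in the soft functional-analytic details (reflexivity and the subsequence principle in place of the paper's one-line ``$\Lrm^q$-equiintegrability upgrades weak* convergence to weak $\Lrm^q$-convergence'').
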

	
	We refer to Section~\ref{sc:counterexamples} for some counterexamples on related statements one might conjecture but which are false. This also sheds some light on the differences to the classical $\Lrm^p$-case. In particular, Example~\ref{ex:laminates} conveys that one cannot expect (local) strong $\Lrm^p$-compactness in the previous corollary.

	\begin{remark}
		Our results are also related to Question~1 in~\cite{BateOrponen20} and to~\cite{DeRosaSerreTione19?}. In this regard, recall that an \term{Alberti representation} of $\mu \in \Mcal^+(\R^d)$ is a (non-zero) measure
		\[
		\nu =  \int_0^1 \Hcal^1\mres E_t \dd t \qquad \text{for some family of 1-rectifiable sets} \ \{E_t\}_{t \in I},
		\]
		such that $\mu \ll \nu$. We say that $\mu$ has $d$ \term{independent} Alberti representations $\{\nu^1,...,\nu^d\}$ with corresponding families $\{E_t^i\}_{t \in I}$, $i=1,...,d$, if
		\[
		\spn \bigl\{\Tan(E_t^1,x), \ldots, \Tan(E_t^d,x)\bigr\} = \R^d
		\]
		for $(\Hcal^1\mres E_t)$-almost every $x$ and almost every $t \in [0,1]$,
		where $\Tan(E,x)$ is the tangent space of $E$ at $x$; see~\cite[Section~2]{AlbertiMarchese16} for a more detailed introduction. If $\mu$ has $d$ independent Alberti representations, then one may construct (see~\cite[Corollary~6.5]{AlbertiMarchese16} or~\cite[Lemma~4.2.5]{DePhilippisMarcheseRindler17}) $d$ normal $1$-currents $T_i$ such that $\mu \ll \|T_i\|$ for all $i = 1,\ldots,d$,  and
		\[
		\spn \bigl\{ \vec{T}_1(x), \ldots, \vec{T}_d(x) \} = \R^d \qquad \text{for $|\mu|^s$-almost every $x \in \R^d$.}
		\]
		Hence,~\cite[Corollary~1.12]{DePhilippisRindler16} tells us that $\mu \ll \Lcal^d$.
		Our results can be interpreted as more quantitative information about the density $\frac{\di\mu}{\di\Lcal^d}$. In particular, under the assumptions that \(\vec{T}_i\) is sufficiently close to \(e_i\), we can deduce that $\frac{\di\mu}{\di\Lcal^d} \in \Lrm^p$ for all $1 < p < \infty$, compare with Example \ref{e:div}. The case  $p = 2$ and  $d=2$ is also considered in~\cite{BateOrponen20}.
	\end{remark}

	We remark that all our present results are all \emph{perturbative} in the sense that they apply only to a small conical neighborhood of the linear space $L$. It is an interesting question to ask whether the results can be extended to the case of \emph{any} closed and convex cone that intersects the wave cone only at the origin. In an earlier version of this article, we posed the following (false) conjecture:
	
	\begin{conjecture}\label{conj:main}
		Suppose $\mu \in \Mcal(\Omega;V)$ satisfies~\eqref{eqn:Amu_pde} and its polar satisfies
		\[
		\frac{\di\mu}{\di|\mu|}(x) \in \Ccal \qquad \text{for} \ \text{$|\mu|$-a.e.} \ x \in \Omega,
		\]
		where $\Ccal \subset V$ is 
		a convex cone satisfying the ellipticity assumption
		\[
		\Ccal \cap \Lambda_\Acal = \{0_V\}.
		\]
		Then, analogous $\Lrm^p$ estimates to those in Theorem \ref{thm:Lp} hold for $\mu$. 
	\end{conjecture}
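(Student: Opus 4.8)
A natural line of attack is to push the argument behind Theorem~\ref{thm:Lp} as far as its \emph{geometry-independent} ingredients allow, and then to pinpoint exactly where the smallness of $\eps$ was used. Three reductions survive verbatim. \emph{Mollification:} set $\mu_\delta := \mu * \rho_\delta$; since $\Ccal$ is a convex cone and the polar of $\mu$ lies in $\Ccal$, the smooth field $\mu_\delta$ satisfies $\mu_\delta(x) \in \Ccal$ for every $x$, while $\Acal\mu_\delta = \sigma * \rho_\delta =: \sigma_\delta$ with $\|\mu_\delta\|_{\Lrm^1} \le |\mu|(\Omega)$ and $\|\sigma_\delta\|_{\Lrm^1} \le |\sigma|(\Omega)$. \emph{Localisation:} with $\chi \in \Crm_c^\infty(\Omega)$, $\chi \equiv 1$ on $\Omega'$, one has $\Acal(\chi\mu_\delta) = \chi\sigma_\delta + [\Acal,\chi]\mu_\delta$, and the commutator is supported in $\supp\nabla\chi \Subset \Omega\setminus\Omega'$; since the kernel of any negative-order Fourier multiplier is smooth off the diagonal, applying such a multiplier to the commutator produces a field that is smooth on $\Omega'$ and bounded there by $C\,\|\mu_\delta\|_{\Lrm^1(\Omega)}$. \emph{The inhomogeneity:} the contribution of $\chi\sigma_\delta$, acted on by an order $-k$ operator, is the sole reason for the restriction $p < d/(d-k)$, through the Riesz-potential bound $\Lrm^1 \to \Lrm^q_\loc$, $q < d/(d-k)$. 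Finally one lets $\delta\downarrow0$ and uses weak-$*$ compactness together with $\Lrm^p$-lower semicontinuity.

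So the whole difficulty concentrates into the a priori estimate $\|\chi\mu_\delta\|_{\Lrm^p(\Omega')} \lesssim \|\mu_\delta\|_{\Lrm^1(\Omega)} + \|\sigma_\delta\|_{\Lrm^1(\Omega)}$ for the smooth field. In the perturbative case one can obtain this by splitting $\chi\mu_\delta = \pi_L(\chi\mu_\delta) + s$ with $|s(x)| = \dist(\chi\mu_\delta(x),L) \le \eps|\chi\mu_\delta(x)|$ pointwise, left-inverting the elliptic symbol $\Abb(\xi)|_L$ by the \emph{fixed} order $-k$ multiplier $m(\xi) := (\Abb(\xi)|_L^*\Abb(\xi)|_L)^{-1}\Abb(\xi)|_L^*$, and absorbing $\|\Op(P)s\|_{\Lrm^p} \le C_p\,\eps\,\|\chi\mu_\delta\|_{\Lrm^p}$ into the left-hand side, where $P(\xi) := m(\xi)\Abb(\xi)$ is an order-zero Calder\'on--Zygmund symbol; this forces $\eps < 1/(C_p+1)$ and explains the $p$-dependence of $\eps$. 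For a general closed convex cone $\Ccal$ with $\Ccal\cap\Lambda_\Acal = \{0\}$, compactness of $\Ccal\cap S_V$ still yields the uniform ellipticity $|\Abb(\xi)v| \ge c|\xi|^k|v|$ for $v\in\Ccal$; but $\Ccal$ typically spans all of $V$ and approaches $\Lambda_\Acal$ arbitrarily closely in direction, so there is no subspace to project onto and $\Abb(\xi)$ restricted to $\spn\Ccal = V$ is not injective --- no single Fourier multiplier can play the role of $m$.

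To replace the projection step I would try a nonlinear device: a Littlewood--Paley decomposition $\mu_\delta = \sum_j \Delta_j\mu_\delta$ in which each block is inverted along a ``frozen'' direction $v_j\in\Ccal$ read off from its local value, with the frequency-overlap errors controlled by $\Acal$-quasiconvexity-type lower bounds in the style of compensated-compactness theory; or a good-$\lambda$/truncation scheme exploiting the \emph{convexity} of the constraint $\frac{\di\mu}{\di|\mu|}(x)\in\Ccal$, which already guarantees (via the De~Philippis--Rindler rigidity, since $\Ccal\cap\Lambda_\Acal=\{0\}$) that $\mu\ll\Lcal^d$, the remaining task being to upgrade absolute continuity to an $\Lrm^p$ density bound; or a duality argument testing the density against $\Acal$-free fields valued in the dual cone of $\Ccal$.

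The hard part is precisely this substitute, and I expect it to be \emph{insurmountable in full generality}: convexity of $\Ccal$ buys mollifiability but not the algebraic splitting that drove the linear argument, and the convex-integration phenomena recalled in the introduction --- almost $k$-quasiconformal curl-free fields with infinite $\Lrm^{1+\delta}_\loc$ norm, and the failure of $\Lrm^p$ regularity under the two-sided constraint --- strongly suggest that some \emph{wide} convex cones meeting $\Lambda_\Acal$ only at the origin still carry $\Acal$-free measures whose densities lie in no $\Lrm^p$ with $p>1$, e.g.\ a field built by iterated lamination along directions confined to a fixed wide cone while the associated Young measure concentrates. Accordingly, instead of completing a proof I would search for such a counterexample --- which is exactly what the parenthetical ``(false)'' in the statement anticipates.
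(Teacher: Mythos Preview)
Your analysis is essentially correct in its conclusion: the conjecture is false, and the paper does not attempt to prove it. The paper explicitly labels it a ``(false) conjecture'' and, rather than giving an argument, cites the work of Guerra, Rai\c{t}\u{a}, and Schrecker~\cite{GRS22}, who construct explicit counterexamples disproving the $\Lrm^p$ estimates for the full range of exponents from Theorem~\ref{thm:Lp}. So your instinct to abandon the proof attempt and search for a counterexample is exactly right, and your dissection of where the perturbative argument breaks down --- namely, that a general convex cone $\Ccal$ with $\Ccal\cap\Lambda_\Acal=\{0\}$ may span all of $V$ and approach $\Lambda_\Acal$ arbitrarily closely in direction, so no fixed subspace $L$ and no fixed multiplier $m(\xi)$ can do the job --- matches the paper's own diagnosis.

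One point of caution: the convex-integration examples you invoke from the introduction (the Astala--Faraco--Sz\'ekelyhidi almost-$k$-quasiconformal fields) are built under the \emph{two-sided} constraint $E_{-\eps}\cup E_\eps$, which is not convex; they illustrate why convexity of $\Ccal$ is necessary for the perturbative Theorem~\ref{thm:Lp}, not why it is insufficient for the full conjecture. The mechanism behind the actual counterexamples in~\cite{GRS22} is different and genuinely uses convex cones. The paper also records that~\cite{GRS22} proposes a refined version of the conjecture with a restricted exponent range $p\in[1,q_{\max})$ where $q_{\max}=q_{\max}(\Abb,\Ccal)\le d/(d-k)$, which remains open --- so your speculative approaches (paraproduct/frozen-direction, good-$\lambda$, duality against the dual cone) might still be worth pursuing for that weakened statement.
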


	However, the examples provided in \cite{GRS22} disprove the validity of Conjecture \ref{conj:main} for the full range of exponents suggested in Theorem~\ref{thm:Lp}. There, the authors pose a slight modification of our conjecture by suggesting the higher integrability holds when the range of exponents is further restricted (see~\cite[Conjecture 1.1]{GRS22}):
	\[
		p \in [1,q_{\max}), \qquad q_{\max} = q_{\max}(\mathbb A,\Kcal) \le p.
	\]
which indeed appears reasonable.
	
	We remark that a positive resolution of such a conjecture would imply the following compensated compactness result, for $p$ below $q_{\max}$ (as defined above): For a uniformly bounded sequence of measures $\mu_j \in \Mcal(\Omega;V)$ ($\sup_j |\mu_j|(\Omega) < \infty$) satisfying $\Acal \mu_j = \sigma_j \in \Mcal(\Omega;W)$ with $\sup_j |\sigma_j|(\Omega) < \infty$ and polars $\frac{\di \mu_j}{\di |\mu_j|}$ lying in a (fixed) cone $\Ccal$ as above, the family $\{\mu_j\}$ is ($\Lrm^p$-)equiintegrable, meaning that the $\mu_j$ are maps (not measures) and there are no ($\Lrm^p$-)concentrations in the sequence.
	
	To motivate the expectation of improved integrability discussed above, we will show the following rigidity result:
	
	\begin{proposition}\label{prop:rigid} Let $\Ccal \subset V$ be a convex cone satisfying 
		\[   
		\Ccal \cap \Lambda_\Acal = \{0\}.
		\]
		Then, there are no non-zero solutions $u \in \Lrm^1(\R^d;V)$ of the homogeneous system
		\[ \left\{
		\begin{aligned}
			\, \Acal u &= 0, \\
			u &\in \Ccal \ \text{a.e.} 
		\end{aligned}
		\right.
		\]
	\end{proposition}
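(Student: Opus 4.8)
The plan is to move to the Fourier side, where $\Acal$-freeness confines $\widehat{u}$ to the complexified wave cone along every frequency ray, and then to slice $u$ so as to reduce the assertion to a one-dimensional incompatibility between this confinement and the convex constraint $u\in\Ccal$. Since $u\in\Lrm^1(\R^d;V)$, the Fourier transform $\widehat{u}$ is a continuous function vanishing at infinity, and $\Acal u=0$ in $\Dcal'(\R^d;W)$ becomes the pointwise identity $\Abb(\xi)\widehat{u}(\xi)=0$ for every $\xi\in\R^d$. By $k$-homogeneity of the principal symbol, $\ker\Abb(\xi)=\ker\Abb(\xi/|\xi|)$ for $\xi\neq 0$, hence
\[
\widehat{u}(\xi)\in\ker_{\C}\Abb(\xi/|\xi|)=\bigl(\ker_{\R}\Abb(\xi/|\xi|)\bigr)\otimes\C\subset\Lambda_\Acal\otimes\C,\qquad \xi\neq 0.
\]
In particular $u$ is valued in $\spn_{\R}\Lambda_\Acal$, so one may assume $\spn_{\R}\Lambda_\Acal=V$ if convenient.

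Next I would slice in an arbitrary direction $\theta\in\R^d$ with $|\theta|=1$, setting $f_\theta(s):=\int_{\{x\,:\,x\cdot\theta=s\}}u\dd\Hcal^{d-1}$. By Fubini, $f_\theta\in\Lrm^1(\R;V)$ with $\|f_\theta\|_{\Lrm^1(\R)}\le\|u\|_{\Lrm^1(\R^d)}$, and its one-dimensional Fourier transform is $\widehat{f_\theta}(t)=\widehat{u}(t\theta)$. Consequently $\widehat{f_\theta}(t)\in\ker_{\C}\Abb(\theta)$ for $t\neq 0$, and, by continuity, for every $t$; applying the orthogonal projection of $V$ onto $\bigl(\ker_{\R}\Abb(\theta)\bigr)^{\perp}$ (extended $\C$-linearly) and using injectivity of the Fourier transform on $\Lrm^1(\R)$, I conclude that $f_\theta(s)\in\ker_{\R}\Abb(\theta)$ for a.e.\ $s$. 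On the other hand, for a.e.\ $s$ the slice $u|_{\{x\cdot\theta=s\}}$ is $\Hcal^{d-1}$-integrable and lies in $\Ccal$ at $\Hcal^{d-1}$-a.e.\ point; writing $f_\theta(s)$ as the finite mass $\|u\|_{\Lrm^1(\{x\cdot\theta=s\})}$ times the barycentre of the $\Ccal$-valued map $u/|u|$ against the probability measure obtained by normalising $|u|\dd\Hcal^{d-1}$ on that hyperplane, one gets $f_\theta(s)\in\Ccal$ for a.e.\ $s$. Combining the two memberships gives $f_\theta(s)\in\Ccal\cap\ker_{\R}\Abb(\theta)\subset\Ccal\cap\Lambda_\Acal=\{0\}$ for a.e.\ $s$, so $f_\theta\equiv 0$. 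Since $\theta$ was arbitrary, $\widehat{u}(t\theta)=0$ for every $t\in\R$ and every unit vector $\theta$, that is $\widehat{u}\equiv 0$, and therefore $u=0$.

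The ingredients Riemann--Lebesgue, the Fubini identity $\widehat{f_\theta}(t)=\widehat{u}(t\theta)$, and Fourier uniqueness on $\Lrm^1(\R)$ are standard; moreover, since $f_\theta$ is real-valued, its transform being valued in $\bigl(\ker_{\R}\Abb(\theta)\bigr)\otimes\C$ indeed forces $f_\theta$ to be valued in $\ker_{\R}\Abb(\theta)$. The step demanding genuine care --- and the only one using the hypothesis in the sharp form $\Ccal\cap\Lambda_\Acal=\{0\}$ rather than $\overline{\Ccal}\cap\Lambda_\Acal=\{0\}$ --- is the claim that a barycentre (equivalently, a line integral) of a $\Ccal$-valued integrable map again lies in $\Ccal$, not merely in $\overline{\Ccal}$. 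For a closed convex cone this is trivial; for a general convex cone it follows by induction on $\dim\overline{\Ccal}$, since if the barycentre lay in $\overline{\Ccal}\setminus\Ccal$ it would sit in the relative interior of a proper exposed face $F$ of $\overline{\Ccal}$, a supporting functional would then force the integrand to be $F$-valued a.e., and one recurses inside the lower-dimensional convex cone $F\cap\Ccal$. If one is instead willing to strengthen the hypothesis to $\overline{\Ccal}\cap\Lambda_\Acal=\{0\}$, this lemma can be bypassed by replacing $\Ccal$ with $\overline{\Ccal}$ from the outset.
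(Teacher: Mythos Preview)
Your proof is correct and takes a genuinely different route from the paper's. The paper argues by contradiction via a blow-down: setting $e:=\int_{\R^d}u\,\dd x$, it rescales $u_\lambda(x)=\lambda^{-d}u(x/\lambda)\to e\,\delta_0$ in $\Dcal'$, whence $\Acal(e\,\delta_0)=0$ and, by Fourier, $\Abb(\xi)e=0$ for every $\xi$; thus $e\in\bigcap_{|\xi|=1}\ker\Abb(\xi)\subset\Lambda_\Acal$, which clashes with $e\in\Ccal\setminus\{0\}$ (the latter using that an acute convex cone forces $\int u\neq 0$ once $u\not\equiv 0$). Your approach instead slices along hyperplanes and works at all frequencies, not just $\xi=0$: the Fubini identity $\widehat{f_\theta}(t)=\widehat u(t\theta)$ pins each slice into $\ker_\R\Abb(\theta)$, while convexity pins it into $\Ccal$, and the two together kill $f_\theta$. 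The paper's argument is shorter and, because it lands $e$ in the intersection $\bigcap_\xi\ker\Abb(\xi)$ rather than in a single kernel, it immediately yields the stronger statement recorded in Remark~\ref{rem:rigid} (only $\Ccal\cap\bigcap_\xi\ker\Abb(\xi)=\{0\}$ is needed). Your argument, on the other hand, avoids the acute/elliptic case split and the passage to a distributional limit, and --- a point the paper glosses over --- you address carefully why the integral of a $\Ccal$-valued function lies in $\Ccal$ and not merely in $\overline{\Ccal}$; your face-induction for this lemma is correct.
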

	\begin{remark}\label{rem:rigid}
		This rigidity result also holds under the weaker assumption that $\Acal$ is co-canceling on $\Ccal$, i.e., 
		\[
		\Ccal \cap 	\bigcap_{|\xi| = 1} \ker \Abb(\xi) = \{0\}.
		\]
	\end{remark}

	\subsection*{Structure of the paper}
	We conclude this introduction by briefly describing the structure of the paper. In Section \ref{sec:elliptic} we present some applications of our main results. In Section \ref{sc:prelim} we record some useful fact about elliptic operators while in Section \ref{sec:rank} we  prove a localized version of Van Schaftingen's estimate for canceling operators which is instrumental to prove Theorem \ref{thm:limiting}. In Section \ref{sec:main} we prove our main results and in  Section \ref{sc:counterexamples} we present some counterexamples showing that the convexity of the nonlinear constraint cannot be relaxed. 

	%
	%
	%
	\subsection*{Acknowledgments} 
	
	AAR, FR and AS have received funding from the European Research Council (ERC) under the European Union's Horizon 2020 research and innovation programe, grant agreement No 757254 (SINGULARITY).  GDP is partially supported by the NSF Grant DMS-2055686. JH was partially supported by the German Science Foundation DFG in context of the Priority Program SPP 2026 ``Geometry at Infinity''.

	\section{Compensated embeddings for elliptic operators}\label{sec:elliptic}
	
	In this section we give an account of applications of our results  to deduce higher integrability results for elliptic operators and other well-known PDE constraints.

	\subsection{Higher integrability for elliptic operators}The same argument can be generalized to all elliptic operators  as follows. Let us fix a homogeneous elliptic operator $\Bcal$ of order $k$ from a finite-dimensional normed vector space $U$ to another such space  $V$, i.e., for a smooth map $u \colon \R^d \to U$,
	\begin{equation}\label{eq:BB}
		\Bcal u := \sum_{|\alpha| = k} B_\alpha \partial^\alpha u, \qquad B_\alpha \in \mathrm{Lin}(U,V).
	\end{equation}
	We make the \emph{ellipticity} assumption that there is a constant $c > 0$ such that the symbol $\Bbb(\xi)$ of $\Bcal$ satisfies
	\[
	|\Bbb(\xi)v| \ge c |\xi|^k|v| \qquad \text{for all $\xi \in \R^d$ and $v \in U$.}
	\]  
	A parade of examples of constant-coefficient elliptic operators can be found in~\cite{VanSchaftingen13} (see also~\cite{ArroyoRabasa20?} where also other less known examples are discussed)

	Elliptic operators possess (see below) an annihilator operator $\Acal$, that is, a constant-coefficient linear PDE operator from $V$ to $V$ such that
	\[
		\im \Bbb(\xi) = \ker \Abb(\xi) \quad \text{for all $\xi \in \R^d \setminus \{0\}$}
	\]
	and
	\[
	\Acal(\Bcal u) = 0 \qquad \text{in $\Dcal'(\Omega;W)$}
	\]
	for all $u \in \Mcal(\Omega;U)$.
	
	If the polar of $\Bcal u$ takes values away from $\Lambda_\Acal$ in the sense of Theorem~\ref{thm:Lp} and Remark~\ref{rem:Afree}, then we can make use of the improved $\Lrm^p$-estimates for $\Acal$-free measures. To make this more precise, let us introduce the following suitable annihilator. By the ellipticity, $\Bbb(\xi)^* \Bbb(\xi) \colon U \to U$ (where here and in the following we have identified the primal and dual spaces) defines a linear isomorphism  for every $\xi \in \R^d \setminus \{0\}$. Consider the homogeneous operator $\Acal$ from $V$ to $V$ of order $2kd$ associated to the symbol
	\[
	\Abb(\xi) := \det[\Bbb(\xi)^* \Bbb(\xi)] \id_V - \Bbb(\xi) \circ \mathrm{adj}[\Bbb(\xi)^* \Bbb(\xi)]\circ \Bbb(\xi)^*.
	\]
	Here, the adjugate matrix $\mathrm{adj}(Q)$ of $Q \in U \otimes U$ is  the transpose of the cofactor matrix of $Q$. In particular, this ensures that $\mathrm{adj}[\Bbb(\xi)^* \Bbb(\xi)]$ is a matrix-valued polynomial of order $2k(d-1)$ and thus $\Abb(\xi)$ is indeed the symbol of a $2kd$-homogeneous operator $\Acal$ with constant coefficients. Moreover, the adjugate matrix of an invertible $Q \in U \otimes U$ satisfies the identity $\mathrm{adj}(Q)\circ Q = \det(Q)\id_U$, which implies that 
	\[
	\im \Bbb(\xi) = \ker \Abb(\xi), \qquad  \xi \in \R^d \setminus \{0\}.
	\]
	In particular, the wave cone associated to $\Acal$ is given as
	\begin{equation}\label{eq:dualcone}
		\Lambda_\Acal = \bigcup_{|\xi|=1} \im \Bbb(\xi) \subset V.
	\end{equation}
	Hence, by a localization argument and an application of the Fourier transform we conclude that $\Acal \circ \Bcal = 0$, as desired. Notice that, due to~\eqref{eq:dualcone}, the $L$-ellipticity condition~\eqref{eq:connections} translates to into the requirement
	\begin{equation}\label{eq:B}
		L \cap \im \Bbb(\xi) = \{0\} \qquad \text{for all $\xi \in \R^d$}.
	\end{equation}
	Therefore, we may apply the higher-regularity results for $L$-elliptic systems of $\Acal$-free measures to deduce higher-integrability for $\Bcal u$ provided that the polar of $\Bcal u$ takes values near $L$. The precise statement is the following:
	
	\begin{theorem}\label{thm:elliptic} Let $\Bcal$ be a homogeneous elliptic operator from $U$ to $V$, of order $k$, and let $L$ be a subspace of $V$ with no $\Bbb$-connections, i.e.,
		\[
		L \cap \im \Bbb(\xi) = \{0\} \qquad \text{for all $\xi \in \R^d$}. 
		\] 
		Then, for every $1 < p < \infty$ and every compactly contained subdomain $\Omega' \Subset \Omega$  there exist positive constant  $\eps = \eps(p,d,\Acal,L,\Omega')$  with the following property: if $u \in \Lrm^1(\Omega;U)$ is  such that $\Bcal u \in \Mcal(\Omega;V)$ and 
		and 
		\begin{equation*}
			\frac{\di \Bcal u}{\di |\Bcal u|}(x) \in \Ccal \qquad \text{for $|\mathcal Bu|$-almost every $x \in \Omega$,}
		\end{equation*}
		for some convex set $\Ccal \subset V$ such that  
		\begin{equation*}
			\dist\left(\Kcal \cap S_V,L\right) \le  \eps,
		\end{equation*}
		then $u\in  \Wrm^{k,p}(\Omega';\R^d)$ and
		\[
		\|u\|_{\Wrm^{k,p}(\Omega')} \le C \,\big( \|u\|_{\Lrm^1(\Omega)} + |\Bcal u|(\Omega)\big) \,, \qquad  \Omega' \Subset \Omega,
		\]
		for some constant $C$ depending on $p,d,L,\Bcal,\Omega'$ and $\dist(\Omega',\partial \Omega)$.
	\end{theorem}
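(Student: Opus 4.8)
The plan is to reduce the claim to the higher-integrability estimate for $\Acal$-free measures, Corollary~\ref{rem:Afree}, applied to the vector measure $\mu := \Bcal u$, and then to recover $u$ from $\Bcal u$ by classical interior elliptic regularity. First I would record that, with $\Acal$ the annihilator of $\Bcal$ constructed above (the homogeneous constant-coefficient operator with symbol $\Abb(\xi) = \det[\Bbb(\xi)^*\Bbb(\xi)]\,\id_V - \Bbb(\xi)\circ\mathrm{adj}[\Bbb(\xi)^*\Bbb(\xi)]\circ\Bbb(\xi)^*$), the adjugate identity $\mathrm{adj}(Q)\,Q = \det(Q)\,\id_U$ gives $\im\Bbb(\xi) = \ker\Abb(\xi)$ for every $\xi\neq 0$, hence $\Lambda_\Acal = \bigcup_{|\xi|=1}\im\Bbb(\xi)$ and $\Acal\circ\Bcal = 0$. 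Therefore the no-$\Bbb$-connection hypothesis $L\cap\im\Bbb(\xi) = \{0\}$ is precisely the ellipticity condition $L\cap\Lambda_\Acal = \{0_V\}$ of~\eqref{eq:connections}, and $\mu := \Bcal u\in\Mcal(\Omega;V)$ is an $\Acal$-free measure whose polar lies, by assumption, in the convex set $\Ccal$ with $\dist(\Ccal\cap S_V,L)\le\eps$. I would then fix an auxiliary Lipschitz domain $\Omega'\Subset\Omega''\Subset\Omega$ with $\dist(\Omega',\partial\Omega'')$ and $\dist(\Omega'',\partial\Omega)$ comparable to $\dist(\Omega',\partial\Omega)$, and choose $\eps$ no larger than the threshold that Corollary~\ref{rem:Afree} provides for the data $(p,d,\Acal,L,\Omega'')$; that corollary then yields $\Bcal u\in\Lrm^p(\Omega'';V)$ with
\[
\|\Bcal u\|_{\Lrm^p(\Omega'')}\;\le\;C\,|\Bcal u|(\Omega),
\]
valid for every $1<p<\infty$ since $\Bcal u$ is $\Acal$-free.

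Next I would upgrade this $\Lrm^p$-bound on $\Bcal u$ to a $\Wrm^{k,p}$-bound on $u$ via classical interior elliptic regularity for the constant-coefficient operator $\Bcal$. Since $\Bbb(\xi)^*\Bbb(\xi)$ is invertible for $\xi\neq 0$, the same adjugate identity exhibits the Fourier multiplier $m(D)$ with symbol $\det[\Bbb(\xi)^*\Bbb(\xi)]^{-1}\,\mathrm{adj}[\Bbb(\xi)^*\Bbb(\xi)]\circ\Bbb(\xi)^*$ --- smooth and $(-k)$-homogeneous away from the origin --- as a left parametrix of $\Bcal$ modulo a smoothing operator. Localizing with a cutoff $\chi\in\Crm_c^\infty(\Omega'')$, $\chi\equiv 1$ on $\Omega'$, and using that $D^\alpha m(D)$ is, for every $|\alpha|\le k$, a Mihlin--H\"ormander multiplier of nonpositive order, hence bounded on $\Lrm^p$ for $1<p<\infty$, one arrives --- after a finite bootstrap over nested subdomains absorbing the lower-order commutator terms $[\Bcal,\chi]u$ --- at the standard interior estimate
\[
\|u\|_{\Wrm^{k,p}(\Omega')}\;\le\;C\big(\|\Bcal u\|_{\Lrm^p(\Omega'')}+\|u\|_{\Lrm^1(\Omega'')}\big).
\]

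Finally I would chain the two displays, using $\|u\|_{\Lrm^1(\Omega'')}\le\|u\|_{\Lrm^1(\Omega)}$, to conclude that $u\in\Wrm^{k,p}(\Omega';\R^d)$ with
\[
\|u\|_{\Wrm^{k,p}(\Omega')}\;\le\;C\big(\|u\|_{\Lrm^1(\Omega)}+|\Bcal u|(\Omega)\big),
\]
the dependence of $\eps$ and $C$ on $p,d,\Bcal,L,\Omega'$ and $\dist(\Omega',\partial\Omega)$ following by tracing the constants through the two previous steps. The only point that is not essentially mechanical is the interior elliptic estimate of the second step carried with merely $\|u\|_{\Lrm^1}$ (rather than $\|u\|_{\Lrm^p}$) on the right-hand side: the cutoff-commutator bookkeeping has to be organized so that no $\Lrm^p$-norm of $u$ survives, which is what the finite bootstrap achieves. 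Since all the compensated-compactness content has been absorbed into Corollary~\ref{rem:Afree}, no genuinely new analytic difficulty arises in this proof.
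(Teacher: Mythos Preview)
Your proposal is correct and follows essentially the same route as the paper: reduce to Corollary~\ref{rem:Afree} applied to the $\Acal$-free measure $\mu=\Bcal u$ (using the annihilator $\Acal$ constructed just before the statement), and then upgrade $\Bcal u\in\Lrm^p_{\loc}$ to $u\in\Wrm^{k,p}_{\loc}$ by interior Calder\'on--Zygmund estimates for the elliptic operator $\Bcal$. The paper's proof is terser---it does not spell out the intermediate domain $\Omega''$ or the commutator bootstrap you describe, and it separately disposes of the degenerate case $L=\{0\}$---but the logical structure is the same.
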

	\begin{proof}
		If $L = \{0\}$, then the choice $\eps < 1$ implies that $\Bcal u \equiv 0$ and hence the result follows from the Riesz representation theorem (or the Lax--Milgram theorem). Let us therefore assume that $L$ is non-trivial and let $\Acal$ be an annihilator of $\Bcal$ as above, so that
		\[
		\im \Bbb(\xi) = \ker \Abb(\xi) \qquad \text{for all $\xi \in \R^d \setminus \{0\}$.}
		\]
		Notice that $\Acal$ is also not the trivial operator since by assumption 
		\[
		L \cap \ker \Abb(\xi) = \{0\} \qquad \text{for all $\xi \in \R^d$}.
		\]
		Fix $1 < p < \infty$ and let $\eps$ be the smallness constant in Theorem~\ref{thm:Lp} (which by Corollary~\ref{rem:Afree} exists for $p$ in this range). It then follows by the aforementioned theorem and its corollary that $\Bcal u \in \Lrm^p(\Omega')$ and
		\[
		\|\Bcal u\|_{\Lrm^p(\Omega')} \le C|\Bcal u|(\Omega), \qquad \Omega' \Subset \Omega.
		\]
		The sought Sobolev estimates for $u$ then follow from a further localization argument and the classical Calder\'on--Zygmund estimates for elliptic operators (see Theorem~\ref{thm:mihlin}). This finishes the proof.
	\end{proof}

	\subsection{Higher integrability for functions of bounded variation}
	
	For an open domain $\Omega \subset \R^d$, we define the space of functions of bounded variation~\cite{AmbrosioFuscoPallara00book} as
	\[
	\BV(\Omega) := \setb{u \in \Lrm^1(\Omega;\R^m)}{Du \in \Mcal(\Omega;\R^m \otimes \R^d)}.
	\]
	We say that a subspace $L$ of $\R^m \otimes \R^d$ has no rank-one connections if 
	\[
	L \cap \setb{a\otimes \xi}{\xi \in \R^d, \; a \in \R^m} = \{0\}.
	\]
	We write
	\[
	S_{m,d} \coloneqq \set{A \in \R^m \otimes \R^d}{|A|^2 = \trace(A^TA) = 1},
	\]
	to denote the set of tensors of norm one in $\R^m \otimes \R^d$.
	Applying Theorem~\ref{thm:elliptic} with $\Bcal := D$ (or Theorem~\ref{thm:Lp} with $\Acal := \curl$) yields the following regularity result:
	
	\begin{proposition}	\label{prop:grad}
		Let $L$  be a space of matrices in $\R^m\otimes \R^d$ with no rank-one connections, let $1 < p < \infty$ and let $\Omega' \Subset \Omega$. Then, there exists a constant $\eps = \eps(p,d,D,L,\Omega')$  such that if $u \in \BV(\Omega;\R^m)$ satisfies
		and 
		\begin{equation*}
			\frac{Du}{|Du|}(x) \in \Ccal \qquad \text{for $|Du|$-almost every $x \in \Omega$,}
		\end{equation*}
		for some convex cone $\Ccal$ of the space of matrices $\R^m \otimes \R^d$ satisfying  
		\begin{equation*}
			\dist\Big(\Ccal \cap S_{m,d},L\Big) \le \eps,
		\end{equation*}
		then $u \in \Wrm^{1,p}(\Omega';\R^m)$, 
		and 
		\[
		\|u\|_{\Wrm^{1,p}(\Omega')} \le C \, \|u\|_{\BV(\Omega)}
		\]
		for some $C = C(p,d,D,L,\Omega',\dist(\Omega',\partial \Omega))$.
	\end{proposition}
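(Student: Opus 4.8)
The plan is to obtain this as a direct specialization of Theorem~\ref{thm:elliptic} to the operator $\Bcal := D$, the distributional gradient regarded as a first-order operator from $U := \R^m$ to $V := \R^m \otimes \R^d$. First I would record the elementary algebraic facts underpinning the translation of hypotheses: the principal symbol of $D$ acts (up to the universal constant) by $\Dbb(\xi)a = a \otimes \xi$, so $|\Dbb(\xi)a| = |\xi|\,|a|$ and $D$ is homogeneous elliptic of order $k = 1$; moreover $\im\Dbb(\xi) = \setn{a \otimes \xi}{a \in \R^m}$, whence $\bigcup_{|\xi|=1}\im\Dbb(\xi)$ is exactly the cone of rank-one tensors in $\R^m\otimes\R^d$. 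Therefore the assumption that $L$ has no rank-one connections is literally the $L$-ellipticity condition $L \cap \im\Dbb(\xi) = \{0\}$ for all $\xi \in \R^d$ demanded by Theorem~\ref{thm:elliptic}.

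Second, I would simply invoke Theorem~\ref{thm:elliptic}: given $u \in \BV(\Omega;\R^m)$ we have $u \in \Lrm^1(\Omega;\R^m)$ and $\Bcal u = Du \in \Mcal(\Omega;\R^m \otimes \R^d)$, and by hypothesis the polar $\frac{Du}{|Du|}$ takes values $|Du|$-a.e.\ in the convex cone $\Ccal$ with $\dist(\Ccal \cap S_{m,d},L) \le \eps$. With $\eps = \eps(p,d,D,L,\Omega')$ the smallness threshold provided by that theorem, it yields $u \in \Wrm^{1,p}(\Omega';\R^m)$ and
\[
\|u\|_{\Wrm^{1,p}(\Omega')} \le C\bigl(\|u\|_{\Lrm^1(\Omega)} + |Du|(\Omega)\bigr) = C\,\|u\|_{\BV(\Omega)},
\]
with $C = C(p,d,D,L,\Omega',\dist(\Omega',\partial\Omega))$, which is the claim. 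As an alternative that bypasses Theorem~\ref{thm:elliptic}, one may argue directly with $\Acal := \curl$: since $\curl(Du) = 0$ in $\Dcal'(\Omega)$, the wave cone $\Lambda_{\curl}$ is again the rank-one cone, so Corollary~\ref{rem:Afree} applied to $\mu := Du$, $\sigma := 0$ gives $\|Du\|_{\Lrm^p(\Omega'')} \le C|Du|(\Omega)$ for every $p \in [1,\infty)$ and any intermediate Lipschitz $\Omega' \Subset \Omega'' \Subset \Omega$; a Poincar\'e--Wirtinger inequality on $\Omega''$ (together with a finite cover of $\Omega'$ by balls $\Subset\Omega$) then upgrades this to the stated bound for $\|u\|_{\Wrm^{1,p}(\Omega')}$.

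The argument has no genuine obstacle, being a packaging of the abstract results already established; the only points that need brief verification are the two algebraic identities for the symbol of $D$ (equivalently, the identification of $\Lambda_{\curl}$) and, if one follows the direct route, the standard nested-domain plus Poincar\'e step that converts an $\Lrm^p$ bound on $Du$ into a $\Wrm^{1,p}$ bound on $u$ with the correct dependence of the constants.
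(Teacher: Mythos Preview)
Your proposal is correct and matches the paper's own approach exactly: the paper states this proposition as an immediate consequence of Theorem~\ref{thm:elliptic} applied with $\Bcal = D$ (or, equivalently, of Theorem~\ref{thm:Lp}/Corollary~\ref{rem:Afree} with $\Acal = \curl$), and gives no further argument. Your verification of the symbol identities and the identification of the rank-one cone with $\bigcup_{|\xi|=1}\im\Dbb(\xi)$ is precisely the translation step the paper leaves implicit.
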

	
	\begin{remark}
		A similar statement holds for the analogous bounded variations spaces associated to the $k$'th-order Hessian
		\[
		D^k u := (\partial^\alpha u)_{|\alpha| = k},
		\]
		which defines an elliptic operator from $\R^m$ to $\R^m \otimes E_k(\R^d)$, where $E_k(\R^d)$ denotes the symmetric tensors of order $k$ on $\R^m$. Then,
		\[\BV^k(\Omega;\R^m) = \setb{u \in \Lrm^1(\Omega;\R^d)}{D^k u \in \Mcal(\Omega;\R^m \otimes E_k(\R^d))},
		\]
		and the $L$-ellipticity condition takes the form
		\[
		L \cap \setb{a \otimes \underbrace{\xi \otimes \cdots \otimes  \xi}_{\text{$k$-times}}}{\xi \in \R^d, \; a \in \R^m} = \{0\}.
		\]
	\end{remark}
	
	\subsection{Higher integrability of symmetric gradients}
	
	The space $\BD(\Omega)$ of functions of bounded deformation~\cite{TemamStrang80,AmbrosioCosciaDalMaso97} is
	\[
	\BD(\Omega) := \setb{u \in \Lrm^1(\Omega;\R^d)}{Du + Du^T \in \Mcal(\Omega;E_2(\R^d)}.
	\]
	We have the following rigidity result for functions of bounded deformation: 
	
	\begin{proposition}\label{prop:sgrad}Let $L$ be a subspace of $E_2(\R^d)$ satisfying
		\[
		L \cap \setn{a \otimes b + b \otimes a}{a,b \in \R^d} = \{0\},
		\]
		 let $1 < p < \infty$, and let $\Omega' \Subset \Omega$. There exists a constant $\eps = \eps(p,d,E,L,\Omega')$ 
		such that if $u \in \BD(\Omega)$ satisfies 
		\[
		\frac{\mathrm{d}Eu}{\mathrm{d}|Eu|}(x) \in \Kcal \qquad \text{for $|Eu|$-almost every $x \in \Omega$,}
		\]
		for some convex cone $\mathcal K$ of $E_2(\R^2)$ such that
		\[
		\dist\Big(\mathcal K \cap S_{d,d},L \Big) \le \eps,
		\]
		then $u \in \Wrm^{1,p}(\Omega';\R^d)$ 
		and 
		\[
		\|u\|_{\Wrm^{1,p}(\Omega')} \le C \, \|u\|_{\BD(\Omega)},
		\]
		for some $C = C(p,d,L,E,\Omega',\dist(\Omega',\partial \Omega))$.
	\end{proposition}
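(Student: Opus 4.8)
The proof is a direct application of Theorem~\ref{thm:elliptic} with the elliptic operator taken to be the symmetric gradient, $\Bcal := E$, $Eu = Du + Du^T$, viewed as a homogeneous first-order operator from $U = \R^d$ to $V = E_2(\R^d)$. Indeed, by definition $u \in \BD(\Omega)$ means exactly that $u \in \Lrm^1(\Omega;\R^d)$ and $Eu \in \Mcal(\Omega;E_2(\R^d))$, which is precisely the setting of that theorem; moreover $\|u\|_{\Lrm^1(\Omega)} + |Eu|(\Omega)$ is (equivalent to) $\|u\|_{\BD(\Omega)}$. So, modulo two routine verifications, the claim follows verbatim from Theorem~\ref{thm:elliptic}.

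The first verification is that $E$ is elliptic in the sense of Section~\ref{sec:elliptic}: its principal symbol acts by $\Bbb(\xi)a = a\otimes\xi + \xi\otimes a$ for $a \in \R^d$, and an elementary computation gives $|\Bbb(\xi)a|^2 = 2|a|^2|\xi|^2 + 2(a\cdot\xi)^2 \ge 2|a|^2|\xi|^2$, so that $|\Bbb(\xi)a| \ge \sqrt{2}\,|\xi|\,|a|$ for all $\xi, a \in \R^d$; this is nothing but the symbol-level manifestation of Korn's inequality, and it has order $k = 1$. The second verification concerns the no-$\Bbb$-connections hypothesis of Theorem~\ref{thm:elliptic}: for fixed $\xi \ne 0$ one has $\im\Bbb(\xi) = \setn{a\otimes\xi + \xi\otimes a}{a \in \R^d}$, and since every tensor of the form $a\otimes b + b\otimes a$ with $b \ne 0$ lies in $\im\Bbb(b/|b|)$ after rescaling by $|b|$ (while it vanishes when $b = 0$), we obtain $\bigcup_{|\xi| = 1}\im\Bbb(\xi) = \setn{a\otimes b + b\otimes a}{a,b \in \R^d}$. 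As $L$ is a linear subspace, the hypothesis $L \cap \setn{a\otimes b + b\otimes a}{a,b \in \R^d} = \{0\}$ is therefore equivalent to $L \cap \im\Bbb(\xi) = \{0\}$ for every $\xi \in \R^d$, which is exactly the ellipticity condition required of $\Bcal := E$.

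With these two points in hand, fix $1 < p < \infty$ and let $\eps = \eps(p,d,E,L,\Omega')$ be the constant furnished by Theorem~\ref{thm:elliptic} for $\Bcal := E$. If $u \in \BD(\Omega)$ has polar $\frac{\di Eu}{\di|Eu|}$ taking values in a convex cone $\Kcal \subset E_2(\R^d)$ with $\dist(\Kcal \cap S_{d,d}, L) \le \eps$, then the theorem yields $u \in \Wrm^{1,p}(\Omega';\R^d)$ together with $\|u\|_{\Wrm^{1,p}(\Omega')} \le C\big(\|u\|_{\Lrm^1(\Omega)} + |Eu|(\Omega)\big) = C\|u\|_{\BD(\Omega)}$, with $C$ depending only on $p, d, L, E, \Omega'$ and $\dist(\Omega',\partial\Omega)$, as claimed (the degenerate case $L = \{0\}$ being already handled inside the proof of Theorem~\ref{thm:elliptic}). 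There is no genuine obstacle here: the analytic content is entirely packaged inside Theorem~\ref{thm:elliptic} — which in turn rests, through its annihilator construction, on the higher-integrability results for $\Acal$-free measures — and the only mild point worth stressing is that the convexity of $\Kcal$ is genuinely used, since it is what guarantees the $\Lrm^p$-regularisations by mollification underlying the whole scheme.
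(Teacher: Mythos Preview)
Your proof is correct and follows exactly the approach the paper intends: the proposition is stated without an explicit proof because it is meant to be an immediate consequence of Theorem~\ref{thm:elliptic} with $\Bcal = E$, in complete analogy with Proposition~\ref{prop:grad} for the full gradient. You have correctly supplied the two routine verifications (ellipticity of the symmetric gradient via the symbol estimate, and the identification $\bigcup_{|\xi|=1}\im\Bbb(\xi) = \{a\otimes b + b\otimes a\}$) that the paper leaves implicit.
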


	%

	\section{Preliminaries}\label{sc:prelim}
	In this section, we recall some well-known facts of Sobolev spaces and Calder\'on--Zygmund estimates that will be useful in the following sections.
	%
	%
	Given a function \(u\) in the Schwartz class \(\mathcal S(\R^d)\), we define its Fourier transform by
	\[
	\mathcal F(u)(\xi)=\hat u(\xi)=\int u(x)e^{-2\pi\ii x \cdot \xi}dx.
	\] 
	We recall  the classical   H\"ormander--Mihlin multiplier theorem (\cite[Theorem 5.2.7]{Grafakos14book1}).
	
	\begin{theorem}[Mihlin]\label{thm:mihlin} 
		Assume that $m \in \Crm^\infty(\R^d \setminus \{0\};\mathrm{Lin}(V,W))$ is smooth and it satisfies
		\[
		|\partial ^{\alpha} m(\xi)|\le C_{\alpha, d} |\xi|^{-\alpha}
		\]
		for all multi-indexes \(\alpha\).
		Then the multiplier
		\[
		T[f] :=  \mathcal F^{-1}\left[m \widehat u\right], \qquad u \in \Scal(\R^d;V),
		\]
		can be extended to a continuous linear functional from $\Lrm^p(\R^d;V)$ to $\Lrm^p(\R^d;W)$, i.e., 
		\[
		\|T[f]\|_{\Lrm^p} \le C \|f\|_{\Lrm^p}
		\]
	\end{theorem}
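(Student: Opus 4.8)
The plan is to follow the classical Calder\'on--Zygmund route to Fourier multiplier theorems. Since $V$ and $W$ are finite-dimensional, fixing bases and using the equivalence of all norms reduces matters to the scalar case $m \in \Crm^\infty(\R^d \setminus \{0\})$; the matrix-valued statement then follows by applying the scalar result entrywise. The first and easiest step is the $\Lrm^2$-bound: the $\alpha = 0$ instance of the hypothesis gives $\|m\|_{\Lrm^\infty} \le C_{0,d}$, so by Plancherel's theorem $T$ satisfies the a priori estimate $\|Tf\|_{\Lrm^2} \le C_{0,d}\|f\|_{\Lrm^2}$ on $\Scal(\R^d)$, hence extends to a bounded operator on $\Lrm^2(\R^d)$.

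Next I would realize $T$ as a Calder\'on--Zygmund operator by analysing its convolution kernel $K := \mathcal{F}^{-1}[m] \in \Scal'(\R^d)$ through a Littlewood--Paley decomposition. Choose $\psi \in \Crm^\infty_c(\R^d)$ supported in $\{1/2 \le |\xi| \le 2\}$ with $\sum_{j \in \Z}\psi(2^{-j}\xi) = 1$ for $\xi \ne 0$, set $m_j(\xi) := m(\xi)\psi(2^{-j}\xi)$ and $K_j := \mathcal{F}^{-1}[m_j]$, so that $K = \sum_j K_j$ in $\Scal'$. The Leibniz rule together with the scaling in the hypothesis gives $|\partial^\alpha m_j(\xi)| \le C_\alpha' \, 2^{-j|\alpha|}$, with $m_j$ supported where $|\xi| \sim 2^j$. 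Fixing an integer $N > d/2$ and applying Plancherel to $m_j$ and to $\partial^\alpha m_j$ for $|\alpha| = N$ yields $\|K_j\|_{\Lrm^2} \lesssim 2^{jd/2}$ and $\||x|^N K_j\|_{\Lrm^2} \lesssim 2^{j(d/2 - N)}$; splitting $\int |K_j|$ at $|x| = 2^{-j}$ and applying Cauchy--Schwarz on each piece then produces the uniform bound $\|K_j\|_{\Lrm^1} \lesssim 1$ and, more quantitatively, $\int_{|x| > R}|K_j(x)|\,dx \lesssim (2^jR)^{d/2 - N}$ whenever $2^jR \ge 1$. Repeating the argument with an extra factor $\xi$ (of size $2^j$ on the support of $m_j$) gives $\|\nabla K_j\|_{\Lrm^1} \lesssim 2^j$.

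With these estimates I would verify the H\"ormander regularity condition
\[
\sup_{y \ne 0} \int_{|x| \ge 2|y|} |K(x-y) - K(x)| \, dx < \infty.
\]
Bounding the left-hand side by $\sum_j \int_{|x| \ge 2|y|}|K_j(x-y) - K_j(x)|\,dx$ and splitting the sum at $2^j|y| = 1$: for $2^j|y| \le 1$ the mean-value inequality controls the $j$-th term by $|y|\,\|\nabla K_j\|_{\Lrm^1} \lesssim 2^j|y|$, and these sum to $O(1)$; for $2^j|y| > 1$ the triangle inequality and the tail bound control the $j$-th term by $(2^j|y|)^{d/2 - N}$, which again sum to $O(1)$ since $N > d/2$. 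Given the $\Lrm^2$-boundedness and this H\"ormander condition, the Calder\'on--Zygmund theorem yields that $T$ is of weak type $(1,1)$; Marcinkiewicz interpolation then gives boundedness on $\Lrm^p$ for $1 < p \le 2$, and density of $\Scal$ in $\Lrm^p$ upgrades the a priori estimate to the stated continuous extension. Finally, the adjoint $T^*$ has multiplier $\overline{m(\xi)}$ (the conjugate transpose $m(\xi)^*$ in the matrix case), which satisfies the same derivative bounds, so boundedness on $\Lrm^p$ for $1 < p \le 2$ of $T^*$ transfers by duality to boundedness on $\Lrm^p$ for $2 \le p < \infty$ of $T$.

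The only genuinely technical point is the kernel analysis in the second paragraph: choosing $N$ large enough relative to $d$ to obtain simultaneously the uniform $\Lrm^1$-bound on $K_j$ and the quantitative tail decay, and then organizing the dyadic sum in the H\"ormander estimate so that the low-frequency ($2^j|y| \le 1$) and high-frequency ($2^j|y| > 1$) regimes each contribute a convergent geometric series. Everything else is either routine bookkeeping or a direct appeal to the classical Calder\'on--Zygmund and Marcinkiewicz theorems; indeed, since the statement is exactly \cite[Theorem~5.2.7]{Grafakos14book1}, one may alternatively just cite it.
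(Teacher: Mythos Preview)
Your sketch is a correct outline of the standard Calder\'on--Zygmund proof of the Mihlin multiplier theorem. The paper, however, does not prove this statement at all: it is recorded in the preliminaries section purely as a quoted classical result, with the citation to \cite[Theorem~5.2.7]{Grafakos14book1} serving in lieu of a proof. Your final remark that one may simply cite Grafakos is therefore exactly what the paper does; the detailed argument you give is additional (and accurate) content beyond what the paper provides.
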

	Given a domain \(\Omega\subset \R^d\) we define the Sobolev space \(W^{k,p}(\Omega)\) as the class of functions \(u\) for which 
	\[
	\|u\|_{ \Wrm^{k,p}(\Omega)} := \left(\sum_{|\alpha| \le  k} \int_\Omega |\partial^\alpha u|^p \right)^\frac{1}{p}<+\infty
	\]
	where \(\partial^\alpha u\) is the \(\alpha\) distributional derivative. Recall also that if \(\Omega=\R^d\), then
	\begin{equation}\label{e:sobolev}
		\|u\|_{ \Wrm^{k,p}(\R^d)}\approx \|\mathcal F^{-1}((1+|\xi|^{k})\widehat u\|_{ \Lrm^p(\R^d)}
	\end{equation}
	We also denote by \(W^{-k,p'}(\Omega)\) the dual of \(W^{k,p}_0(\Omega)\), where \(p'=p/(p-1)\).
	
	We recall the following embedding  results for Sobolev spaces, see for instance~\cite{AdamsHedberg96book} or~\cite{Leoni09book}:
	
	\begin{lemma}\label{lem:embed} Let $k$ be an integer, $\ell$ a non-negative integer and  $1 < p < \infty$ and let \(\Omega\) be either \(\R^d\) or a Lipschitz domain. Then,
		\begin{enumerate}[(a)]\setlength{\itemsep}{1pt}
			\item $\Wrm^{k + \ell,p}(\Omega) \embed \Wrm^{k,\frac{dp}{d - \ell p}}(\Omega)$, when $0 < \ell p < d$;
			\item $\Wrm^{k+\ell,p}(\Omega) \embed \Wrm^{k,q}_\loc(\Omega)$ for all $p \le q < \infty$, when $\ell p \ge d$;
			\item $\Wrm^{k+\ell,p}(\Omega) \embed \Crm^{k}(\Omega)$, when $\ell p > d$ and $k \ge  0$.
			\item In particular, $\Mcal(\Omega) \embed \Wrm^{-k,q}(\Omega)$ for all $1 < q < \frac d{d-k}$ and $0 < k < d$ or all $q > 1$ if $k \geq d$.
		\end{enumerate}
	\end{lemma}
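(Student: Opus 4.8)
This is a compilation of classical facts about Sobolev spaces; the plan is to obtain (a)--(c) from the scalar Gagliardo--Nirenberg--Sobolev and Morrey inequalities applied to all derivatives up to a given order, and then to deduce (d) by duality. For (a)--(c) I would first reduce to $\Omega = \R^d$: for a Lipschitz domain there is a bounded linear extension operator $\Wrm^{k+\ell,p}(\Omega) \to \Wrm^{k+\ell,p}(\R^d)$, and since the conclusions of (b)--(c) are local, this loses nothing. On $\R^d$, part (a) follows by applying $\Wrm^{\ell,p}(\R^d) \embed \Lrm^{dp/(d-\ell p)}(\R^d)$ (valid for $0 < \ell p < d$) to every distributional derivative $\partial^\alpha u$ with $|\alpha| \le k$ and summing; part (c) follows in the same way from Morrey's inequality $\Wrm^{\ell,p}(\R^d) \embed \Crm^{0,1 - d/(\ell p)}(\R^d)$ (valid for $\ell p > d$) applied to $\partial^\alpha u$, $|\alpha|\le k$. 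For (b), when $\ell p > d$ part (c) already gives $\Wrm^{\ell,p}\embed \Lrm^\infty_\loc \embed \Lrm^q_\loc$ for every finite $q$; the borderline case $\ell p = d$ is the only delicate point and is covered by the Trudinger--Moser embedding $\Wrm^{\ell,p}_\loc \embed \Lrm^q_\loc$ for all $q < \infty$. All of this is contained in \cite{AdamsHedberg96book,Leoni09book}.

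For part (d), let $\mu \in \Mcal(\Omega)$ and $\phi \in \Crm_c^\infty(\Omega)$, so that $|\langle\mu,\phi\rangle| = \bigl|\int_\Omega \phi\dd\mu\bigr| \le \|\phi\|_{\Lrm^\infty}\,|\mu|(\Omega)$. With $q' = q/(q-1)$, a short computation shows that $kq' > d$ is equivalent to $q < d/(d-k)$ when $k < d$ and holds for every $q \in (1,\infty)$ when $k \ge d$. Under this condition, part (c) (applied with $(0,k,q')$ in place of $(k,\ell,p)$, after extending $\phi$ by zero to $\R^d$) gives $\|\phi\|_{\Lrm^\infty} \le C\|\phi\|_{\Wrm^{k,q'}(\Omega)}$, whence $|\langle\mu,\phi\rangle| \le C\,|\mu|(\Omega)\,\|\phi\|_{\Wrm^{k,q'}}$. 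Since $\Crm_c^\infty(\Omega)$ is dense in $\Wrm^{k,q'}_0(\Omega)$, the functional $\phi \mapsto \langle\mu,\phi\rangle$ extends uniquely and boundedly to $\Wrm^{k,q'}_0(\Omega)$, i.e.\ to an element of $\Wrm^{-k,q}(\Omega)$ with $\|\mu\|_{\Wrm^{-k,q}} \le C|\mu|(\Omega)$; and the assignment is injective because a measure annihilating $\Crm_c^\infty(\Omega)$ --- which is dense in $\Crm_0(\Omega)$ --- must vanish. This yields the asserted continuous embedding.

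The proof has no genuine obstacle; the only points requiring care are the borderline exponent $\ell p = d$ in (b), where one must invoke exponential integrability rather than iterate (a), and the endpoint bookkeeping in (d), namely checking that the excluded value $q = d/(d-k)$ is exactly where $kq' = d$ and Morrey's embedding into $\Lrm^\infty$ fails, so that the range in (d) is sharp.
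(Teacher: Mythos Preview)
The paper does not actually prove this lemma; it is stated with the preface ``We recall the following embedding results for Sobolev spaces, see for instance~\cite{AdamsHedberg96book} or~\cite{Leoni09book}'' and no argument is given. Your sketch therefore supplies what the paper omits, and the route you take --- scalar Sobolev/Morrey/Trudinger embeddings applied derivative-by-derivative for (a)--(c), then duality for (d) --- is the standard one and is correct.

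One minor remark: the lemma allows $k$ to be any integer, and the paper does invoke it with negative first index (see~\eqref{eq:qell_embed}). Your argument for (a)--(b), which applies the scalar embeddings to each $\partial^\alpha u$ with $|\alpha|\le k$, tacitly assumes $k\ge 0$. For $k<0$ one obtains (a) and (b) by the same duality mechanism you used for (d): the positive-order embedding $\Wrm^{m,q'}\hookrightarrow \Wrm^{m-\ell,p'}$ dualises to $\Wrm^{-(m-\ell),p}\hookrightarrow \Wrm^{-m,q}$, and a short computation checks that the exponents match. This is routine and is covered by the cited references, so it is not a genuine gap --- just a point to make explicit if you write the proof out in full.
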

	
	It will be convenient to introduce the following exponent.
	
	\begin{definition} \label{def:qell}
		Let $1 < q < \infty$ and $\ell \in \N$. Define
		\begin{equation}\label{eq:notation}
			q(\ell) := \begin{cases}
				\frac{dq}{d -\ell q} & \ell q < d,\\
				q(\ell-1) &  \ell q \ge d,
			\end{cases}
		\end{equation}
		and
		\[
		q(0) := q.
		\]
	\end{definition}
	
	Notice that in particular $q \le q(\ell) < \infty$ and that
	\[
	[q(r)](\ell) = q(r+\ell).
	\]
	From Lemma~\ref{lem:embed} we have
	\begin{equation} \label{eq:qell_embed}
		\Wrm^{-k+\ell,q}(\R^d) \embed \Wrm^{-k,q(\ell)}_\loc(\R^d).
	\end{equation}

	\section{Generalized Laplace operators}We prove standard existence and regularity estimates for generalized Laplace--Beltrami operators associated with elliptic operators. For an elliptic homogeneous operator $\Bcal$ of order $k$ from $U$ to $V$ as in~\eqref{eq:BB}, we define the homogeneous operator $\triangle_\Bcal \coloneqq \Bcal^*\Bcal$ of order $2k$ from $U$ to $U$ that is associated to the symbol $\Bbb(\xi)^* \Bbb(\xi)$. Here, $\Bcal^*$ denotes the formal adjoint of $\Bcal$ and $\Bbb(\xi)^*$ denotes the Hermitian adjoint of $\Bbb(\xi)$. 
	
	\begin{remark}For every elliptic operator $\Bcal$, the operator $\triangle_\Bcal$ is positive-definite in the sense that its symbol $\Bbb(\xi)^*\Bbb(\xi)$ satisfies 
		\begin{align*}
			\Bbb(\xi)\Bbb(\xi)^*a \cdot a = |\Bbb(\xi)a|^2 \ge C |\xi|^{2k}|a|^2 \quad \text{for all $a \in U$.}
		\end{align*}
	\end{remark}
	\begin{remark}
		Notice that $\triangle_D = - \Delta$, where $D = (\partial_1,\dots,\partial_d)$ is the gradient operator and $\Delta = \sum_{j = 1} \partial_{jj}$ is the classical Laplacian operator.
	\end{remark}

	\subsection{Existence and regularity of generalized Laplacians}
	
	%
	
	The following existence for the Dirichlet problem
	is a direct consequence of the standard $\Lrm^p$-theory for strongly elliptic systems:

	\begin{lemma}\label{lem:std} Let $\Bcal$ be an elliptic operator from $U$ to $V$ of order $k$. Define the generalized Laplace--Beltrami operator  $\triangle_\Bcal$ as the operator associated to the symbol $\Bbb(\xi)^*  \Bbb(\xi)$.
		Then, for $1 < p < \infty$, the distributional equation
		\[
		(\Id+\triangle_\Bcal) u = f, \qquad f \in \Lrm^p(\R^d;U),
		\]
		has a unique solution $u \in   \Wrm^{2k,p}(\R^d;U)$ satisfying,
		\[
		\| u\|_{W^{2k,p}} \le C \|f\|_{\Lrm^p} 
		\]
		with $C = C(p,d,\Bcal)$.
	\end{lemma}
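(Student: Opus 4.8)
The plan is to reduce the Dirichlet-type problem $(\Id + \triangle_\Bcal)u = f$ to an application of the Hörmander--Mihlin multiplier theorem (Theorem~\ref{thm:mihlin}) on all of $\R^d$, exploiting the strong ellipticity of $\triangle_\Bcal = \Bcal^*\Bcal$ recorded in the preceding remark. Concretely, one passes to the Fourier side: the equation becomes $(1 + \Bbb(\xi)^*\Bbb(\xi))\,\hat u(\xi) = \hat f(\xi)$, and since $\Bbb(\xi)^*\Bbb(\xi)$ is symmetric positive semidefinite with $\Bbb(\xi)^*\Bbb(\xi)a\cdot a \ge C|\xi|^{2k}|a|^2$, the matrix $\Id + \Bbb(\xi)^*\Bbb(\xi)$ is invertible for every $\xi \in \R^d$ with smallest eigenvalue $\ge 1 + C|\xi|^{2k}$. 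Hence the solution is given by the Fourier multiplier $u = \mathcal F^{-1}\big[(\Id + \Bbb(\xi)^*\Bbb(\xi))^{-1}\hat f\big]$, which immediately gives uniqueness in, say, $\mathcal S'$.

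To obtain the quantitative estimate $\|u\|_{W^{2k,p}} \le C\|f\|_{\Lrm^p}$, by the Littlewood--Paley characterization~\eqref{e:sobolev} it suffices to show that for each multi-index $\beta$ with $|\beta| \le 2k$ the symbol
\[
m_\beta(\xi) := (2\pi\ii\xi)^\beta\,(\Id + \Bbb(\xi)^*\Bbb(\xi))^{-1}
\]
is a Hörmander--Mihlin multiplier, i.e.\ satisfies $|\partial^\alpha m_\beta(\xi)| \le C_{\alpha,d}|\xi|^{-|\alpha|}$ for all $\alpha$. The first step is to check that $(\Id + \Bbb(\xi)^*\Bbb(\xi))^{-1}$ is smooth on $\R^d \setminus\{0\}$ and homogeneous-like of degree $-2k$ at infinity while being bounded near the origin; more precisely, writing $Q(\xi) = \Id + \Bbb(\xi)^*\Bbb(\xi)$, one has $\|Q(\xi)^{-1}\| \le (1 + C|\xi|^{2k})^{-1} \lesssim (1+|\xi|)^{-2k}$. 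The second step is to differentiate $Q(\xi)^{-1}$ using the identity $\partial_j(Q^{-1}) = -Q^{-1}(\partial_j Q)Q^{-1}$ repeatedly, noting that $\partial^\gamma Q(\xi)$ is a polynomial of degree $2k - |\gamma|$ (or zero), so that by induction $|\partial^\alpha Q(\xi)^{-1}| \lesssim (1+|\xi|)^{-2k}|\xi|^{-|\alpha|}$ away from the origin; combined with $|(2\pi\ii\xi)^\beta| \le C|\xi|^{|\beta|} \le C(1+|\xi|)^{2k}|\xi|^{|\beta|-2k}$, the Mihlin bounds for $m_\beta$ follow (the factors $(1+|\xi|)^{\pm 2k}$ cancel and one is left with powers $|\xi|^{-|\alpha|}$ times bounded quantities). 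The third step is to invoke Theorem~\ref{thm:mihlin} for each $m_\beta$ to conclude $\|\partial^\beta u\|_{\Lrm^p} \le C\|f\|_{\Lrm^p}$, and sum over $|\beta| \le 2k$. Finally, existence of $u \in W^{2k,p}$ follows since the multiplier construction produces a tempered distribution solution, and the a priori bound shows it lies in $W^{2k,p}$; alternatively, approximate $f$ by Schwartz functions and pass to the limit using the estimate.

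The main obstacle is purely bookkeeping: verifying uniformly (in $\xi$, near and away from the origin) the Mihlin-type derivative bounds on the matrix-valued symbol $(2\pi\ii\xi)^\beta(\Id + \Bbb(\xi)^*\Bbb(\xi))^{-1}$. This is where the positive-definiteness of $\triangle_\Bcal$ is essential — it is what gives the lower bound on the eigenvalues of $Q(\xi)$ uniform in direction and the correct decay at infinity — and one must be slightly careful that the "$+\Id$" regularizes the behaviour at $\xi = 0$ so that no singularity is introduced there (only the standard $|\xi|^{-|\alpha|}$ scaling from differentiating homogeneous-type symbols survives). Everything else is a routine application of the Fourier transform and the cited multiplier theorem, and this lemma is then exactly the input needed for the Calderón--Zygmund-type estimates used elsewhere in the paper.
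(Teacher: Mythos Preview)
Your proposal is correct and follows essentially the same route as the paper: define the solution via the Fourier multiplier $(\Id+\Bbb(\xi)^*\Bbb(\xi))^{-1}$, use positive-definiteness of $\triangle_\Bcal$ to get the uniform lower eigenvalue bound, and appeal to the H\"ormander--Mihlin theorem for the $W^{2k,p}$ estimate, with uniqueness read off on the Fourier side. The only cosmetic difference is that the paper invokes the equivalence~\eqref{e:sobolev} to reduce to checking the \emph{single} multiplier $(1+|\xi|^{2k})(\Id+\Bbb(\xi)^*\Bbb(\xi))^{-1}$, whereas you verify the Mihlin bounds for each $m_\beta(\xi)=(2\pi\ii\xi)^\beta(\Id+\Bbb(\xi)^*\Bbb(\xi))^{-1}$ separately; both are standard and equivalent.
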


	\begin{proof} 
		
		Since $\triangle_\Bcal$ is positive-definite, the map
		\[
		\Tbb(\xi) := (\id+\Bbb(\xi)\Bbb(\xi)^{*})^{-1}
		\]
		is well-defined and smooth on $\R^d $.  Given a Schwartz  function  \(f\in \mathcal S(\R^d;U) \), we set 
		\[
		u\coloneqq\mathcal F^{-1}( \Tbb(\xi) \widehat f).
		\]
		It is immediate to verify that \(u\) belongs to \(\mathcal S(\R^d;U)\) and that it solves the equation. 
		Moreover, by the Mihlin multiplier Theorem~\ref{thm:mihlin} and \eqref{e:sobolev},
		\[
		\begin{split}
			\| u\|_{W^{2k,p}(\R^d)}&\le C\|\mathcal F^{-1}((1+|\xi|^{2k})\widehat u)\|_{ \Lrm^p(\R^d;U)}
			\\
			&=\|\Fcal^{-1}( (1+|\xi|^{2k}) \Tbb(\xi) \widehat f)\|_{ \Lrm^p(\R^d;U)}\\
			& \le C \|f\|_{\Lrm^p(\R^d;U)}.
		\end{split}
		\]
		A simple approximation argument concludes the existence part of the proof. To prove uniqueness just note that the positivity of $\triangle_\Bcal$ implies that any Schwartz distributions solving 
		\[
		(\Id+\triangle_\Bcal) u = 0,
		\]
		also satisfies \(\hat u(\xi)=0\) for \(\xi \in \R^d\).
	\end{proof}

	For later use we now prove a perturbative version of the previous invertibility result. This will allow us to prove the main $\Lrm^p$  estimates for elliptic systems by a duality argument.
	

	\begin{corollary}\label{cor:perturbation} Let $\Omega \subset \R^d$ be a Lipschitz domain and let $\Bcal$ as be as in the previous lemma. Consider the operator $\id+\triangle_\Bcal - \Rcal$, where $\Rcal$ is an operator from $U$ to $U$ of order $2k$ with measurable coefficients of the form 
		\[
		\Rcal = \sum_{|\alpha| = 2k} R_\alpha(x) \partial^\alpha, \qquad R_\alpha \in \mathrm{Lin}(U,U).
		\]
		For $1  < p < \infty$, there exists a constant $\delta = \delta(d,p,\Omega,\Bcal) > 0$ such that, if
		\[
		\sup_{|\alpha| = k} \norm{R_\alpha}_{\Lrm^\infty} \le \delta,	
		\]
		then the distributional equation
		\[
		(\Id+\triangle_\Bcal - \Rcal)u = f, \qquad f \in \Lrm^p(\Omega;U),
		\]
		has a solution $u \in \Wrm^{2k,p}(\Omega;U)$ satisfying
		\[
		\|u\|_{W^{2k,p}(\Omega;U)} \le C \|f\|_{\Lrm^p(\Omega;U)} 
		\]
		for some constant $C = C(p,d,\Bcal)$.
	\end{corollary}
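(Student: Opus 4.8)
The plan is to prove Corollary~\ref{cor:perturbation} by a standard Neumann series (contraction mapping) argument, using Lemma~\ref{lem:std} as the unperturbed solver. First I would extend the data: given $f \in \Lrm^p(\Omega;U)$, pick any extension $\tilde f \in \Lrm^p(\R^d;U)$ with $\|\tilde f\|_{\Lrm^p(\R^d)} \le C\|f\|_{\Lrm^p(\Omega)}$, and define the solution operator $\Scal \coloneqq (\Id+\triangle_\Bcal)^{-1} \colon \Lrm^p(\R^d;U) \to \Wrm^{2k,p}(\R^d;U)$ provided by Lemma~\ref{lem:std}, which is bounded with a norm depending only on $p,d,\Bcal$. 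The equation $(\Id+\triangle_\Bcal - \Rcal)u = \tilde f$ is then rewritten (for $u$ supported morally inside $\Omega$, or after composing with $\Scal$) as the fixed-point problem $u = \Scal(\tilde f) + \Scal(\Rcal u)$ on $\Wrm^{2k,p}(\R^d;U)$, i.e. $u = \Scal \tilde f + T u$ where $T \coloneqq \Scal \circ \Rcal$.

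The key estimate is that $T$ is a contraction when $\delta$ is small. Since $\Rcal = \sum_{|\alpha|=2k} R_\alpha(x)\partial^\alpha$ has top order $2k$ with $\Lrm^\infty$ coefficients, for $v \in \Wrm^{2k,p}(\R^d;U)$ we have $\|\Rcal v\|_{\Lrm^p} \le \bigl(\sup_{|\alpha|=2k}\|R_\alpha\|_{\Lrm^\infty}\bigr)\sum_{|\alpha|=2k}\|\partial^\alpha v\|_{\Lrm^p} \le C_0\,\delta\,\|v\|_{\Wrm^{2k,p}}$, where $C_0 = C_0(d,k)$ counts the multi-indices. Composing with $\Scal$ gives $\|Tv\|_{\Wrm^{2k,p}} \le \|\Scal\| C_0 \delta \|v\|_{\Wrm^{2k,p}}$, so choosing $\delta = \delta(d,p,\Bcal)$ with $\|\Scal\|C_0\delta \le \tfrac12$ makes $T$ a contraction with constant $\tfrac12$ on $\Wrm^{2k,p}(\R^d;U)$. (I note the hypothesis as stated controls $\sup_{|\alpha|=k}\|R_\alpha\|_{\Lrm^\infty}$; I would read this as the natural top-order bound $\sup_{|\alpha|=2k}\|R_\alpha\|_{\Lrm^\infty}\le\delta$ consistent with the displayed form of $\Rcal$, and proceed accordingly.) Hence $u \coloneqq \sum_{n\ge 0} T^n(\Scal\tilde f)$ converges in $\Wrm^{2k,p}(\R^d;U)$, solves $(\Id+\triangle_\Bcal-\Rcal)u = \tilde f$ on $\R^d$, and satisfies $\|u\|_{\Wrm^{2k,p}(\R^d)} \le 2\|\Scal\tilde f\|_{\Wrm^{2k,p}} \le C\|f\|_{\Lrm^p(\Omega)}$. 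Restricting $u$ to $\Omega$ yields a solution of the equation on $\Omega$ with the desired bound.

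The only real subtlety — and the step I expect to require the most care — is the passage between $\R^d$ and the bounded Lipschitz domain $\Omega$: since $\Rcal$ is only defined on $\Omega$, one must extend its coefficients $R_\alpha$ to all of $\R^d$ (e.g. by zero, or by a bounded extension) without increasing the $\Lrm^\infty$ norm, so that $\Scal\circ\Rcal$ makes sense globally; and then the restriction of the global solution solves the original equation in $\Dcal'(\Omega;U)$ because all operators involved are local/differential. The Lipschitz regularity of $\Omega$ is what guarantees a bounded extension operator $\Lrm^p(\Omega)\to\Lrm^p(\R^d)$ for the right-hand side $f$ (alternatively $\Lrm^p$-extension by zero suffices here since no trace compatibility is needed for existence). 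This is routine but must be stated cleanly; the $\delta$ can be taken independent of $\Omega$ once the extension norms are controlled, though as written the statement allows $\delta$ to depend on $\Omega$, so there is room to spare. No compactness or Fredholm theory is needed — the smallness of $\Rcal$ makes the bare contraction argument sufficient, which is exactly why this perturbative form is singled out for the later duality argument.
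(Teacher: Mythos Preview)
Your proposal is correct and follows essentially the same approach as the paper: define $T = (\Id+\triangle_\Bcal)^{-1}\Rcal$ on $\Wrm^{2k,p}(\R^d;U)$, use the smallness of the coefficients to make $T$ a contraction, invert $\Id - T$ via the Neumann series, apply this to $(\Id+\triangle_\Bcal)^{-1}\tilde f$ with $\tilde f$ the zero-extension of $f$, and restrict the resulting global solution to $\Omega$. You are also right that the hypothesis should read $\sup_{|\alpha|=2k}\|R_\alpha\|_{\Lrm^\infty}\le\delta$, and your remark about extending the coefficients of $\Rcal$ by zero to make $\Scal\circ\Rcal$ globally defined is a point the paper leaves implicit.
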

	\begin{proof} For a given function $\eta \in \Lrm^p(\R^d;U)$, we write $(\id+\triangle_\Bcal)^{-1} \eta \in \Wrm^{2k,p}(\R^d)$ to denote the solution of $(\id +\triangle_\Bcal )w = \eta$ constructed in the previous lemma, for which we have
		\[
		\|(\id +\triangle_\Bcal)^{-1} \eta\|_{\Wrm^{2k,p}} \le C(p,d,\Bcal) \|\eta\|_{\Lrm^p}.
		\]
		Define the linear map 
		\[
		T[u] := (\id+\triangle_\Bcal)^{-1}\Rcal[u]
		\]
		for $\Wrm^{2k,p}(\R^d;U)$. The second statement of the previous lemma implies that 
		\[
		\|T\|_{\Wrm^{2k,p} \to \Wrm^{2k,p}} \le C\delta,
		\]
		where the constant $C$ depends on $p,d$ and $\Bcal$.
		Now, provided that $\delta$ is sufficiently small (i.e., strictly smaller than  $1/C$), the von-Neumann series bounds imply that $(\id - T)$ is invertible with $\|(\id - T)^{-1}\|_{\Wrm^{2k,p} \to \Wrm^{2k,p}} \le 2$. In particular, we may define 
		\[
		u := (\id - T)^{-1}(\id+\triangle_\Bcal)^{-1}{\tilde f} \in  \Wrm^{2k,p}(\R^d;U),
		\]
		where $\tilde f$ is the trivial extension by zero of the function $f$. By construction, $u$ satisfies the distributional equation
		\[
		(\id +\triangle_\Bcal - \Rcal)u = (\id +\triangle_\Bcal )[(\id - T)u] = {\tilde f} \quad \text{on $\R^d$},
		\]
		and the global bound 
		\[
		\|u\|_{\Wrm^{2k,p}(\R^d;U)} \le C(p,d,\Bcal) \|f\|_{\Lrm^p(\Omega;U)}.
		\]
		{The assertion follows by taking the restriction of $u$ to $\Omega$.}
		%
	\end{proof}

	\section{Canceling operators}\label{sec:rank}

	The aim of this section is to show a local version for constant-rank operators of Van~Schaftingen's canceling estimates for elliptic operators~\cite[Theorem~1.3]{VanSchaftingen13}. This will be employed in the proof of the borderline estimate of Theorem~\ref{thm:limiting}.
	
	More precisely, we show that if $\Acal$ is a canceling operator of constant rank and $\mu \in \Mcal(\Omega;V)$, then 
	\[
	\Acal \mu = \sigma \in \Mcal(\Omega) \quad \Longrightarrow \quad \sigma \in \Wrm^{-1,d/(d-1)}(\Omega'), \qquad \Omega' \Subset \Omega,
	\]
	and  the $\Wrm^{-1,d/(d-1)}(\Omega')$-norm of $\sigma$ can be quantitatively estimated in terms of its total variation measure $|\sigma|$ up to a constant depending solely on $d,\Abb$ and $\dist(\Omega',\Omega)$.
	
	\subsection{Representation of operators of constant rank} In order to establish the local cocanceling estimates we need to recall the kernel representation for operators of constant rank, along with some basic properties of their kernel representation.  To this end, let us fix a linear PDE operator $\Acal$ of order $k$ from $V$ to $W$ satisfying
	\begin{equation}\label{eq:cr}
		\rank \Abb(\xi) = \mathrm{const}  \qquad \text{for all $\xi \in \R^d \setminus \{0\}$,}
	\end{equation}
	and denote by $\pi(\xi): V \to V$ the orthogonal projection from $V$ to $(\ker \Abb(\xi))^\perp$. A classical result of Schulenberger and Wilcox~\cite{SchulenbergerWilcox72} states that if~\eqref{eq:cr} is verified, then the map $\xi \mapsto \pi(\xi)$ is analytic on $\R^d \setminus \{0\}$. Since $\pi(\xi)$ is also $0$-homogeneous, the map $\xi \mapsto \pi(\xi)$ defines an $\Lrm^p$-Fourier multiplier. Murat~\cite{Murat81} used this to define an ``$\Abb$-representative'', which for any $u \in \Ecal'(\R^d;V)$, can be defined as
	\[
	u_\Abb := \Fcal^{-1}(\pi \widehat u).
	\]
	Notice that, by construction, we have $\Abb \circ \pi = \Abb \circ [\id - \proj_{\ker \Abb(\xi)}] = \Abb$ (with $\proj_{\ker \Abb(\xi)}$ the orthogonal projection onto $\ker \Abb(\xi)$) and therefore
	\begin{equation}\label{eq:represent}
		\Abb \widehat {u_\Abb} = \Abb \widehat u, \qquad \Acal u_\Abb = \Acal u.
	\end{equation}
	It is well-known that~\eqref{eq:cr} implies that $\xi \mapsto \Abb(\xi)^\dagger$ belongs to $\Crm^\infty(\R^d;\mathrm{Lin}(W,V))$, where $M^\dagger$ denotes the Moore-Penrose inverse of $M$ (see, for instance~\cite[Propositon 8]{Arroyo_Simental}).
	On the other hand, using that $\Abb^\dagger$ is homogeneous of degree $-k$, one can show that $\Abb^\dagger$ extends to a tempered distribution on $\R^d$ and, denoting its extension also by $\Abb^\dagger$, the kernel $K_\Abb := (2\pi\mathrm{i})^k \Fcal^{-1}\Abb^\dagger$ is locally integrable on $\R^d \setminus \{0\}$ and belongs to $\Crm^\infty(\R^d \setminus \{0\};\mathrm{Lin}(W,V))$. The precise statement, which follows from a minor modification of~\cite[Lemma~2.1]{BousquetVanSchaftingen14}, is the following:
	\begin{lemma}\label{lem:kernel}
		Let $\Acal$ be a linear PDE operator of order $k$, from $V$ to $W$. If $\Acal$ satisfies the constant-rank property, then $K_\Abb := (2\pi\mathrm{i})^k \Fcal^{-1}\Abb^\dagger$ is locally integrable and satisfies
		\[K_\Abb \in \Crm^\infty(\R^d \setminus \{0\};\mathrm{Lin}(W,V)).
		\]
		In particular, $K_\Abb$ is a fundamental solution of $\Acal$ in the sense that
		\[
		u_\Abb = K_\Abb \star \Acal u  \qquad \text{for all $u \in \Crm^\infty_c(\R^d;V)$}. 
		\]
		Moreover, for every $\ell > k - d$, the map $D^\ell K_\Abb$ is homogeneous of degree $-d + k - \ell$.	
	\end{lemma}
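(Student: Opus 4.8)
The plan is to transcribe the proof of \cite[Lemma~2.1]{BousquetVanSchaftingen14}, which is carried out there for \emph{elliptic} operators, replacing the one-sided inverse of the symbol used in that argument by the Moore--Penrose inverse $\Abb(\xi)^\dagger$. The key observation is that the argument uses only three features of the inverse, all of which survive under the constant-rank assumption~\eqref{eq:cr}: \emph{(i)} $\xi \mapsto \Abb(\xi)^\dagger$ is smooth on $\R^d \setminus \{0\}$; \emph{(ii)} it is homogeneous of degree $-k$; and \emph{(iii)} one has the identities $\Abb(\xi)\Abb(\xi)^\dagger\Abb(\xi) = \Abb(\xi)$ and $\Abb(\xi)^\dagger\Abb(\xi) = \pi(\xi)$. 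Property \emph{(i)} is the only genuinely new ingredient and is precisely \cite{Arroyo_Simental} (which in turn rests on the analyticity of $\xi \mapsto \pi(\xi)$ from \cite{SchulenbergerWilcox72}); \emph{(ii)} follows from $\Abb(t\xi)^\dagger = (t^k\Abb(\xi))^\dagger = t^{-k}\Abb(\xi)^\dagger$ for $t>0$; and \emph{(iii)} is a standard identity for the Moore--Penrose inverse combined with the definition of $\pi(\xi)$ recalled above.

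First I would realize $\Abb^\dagger$ as a homogeneous tempered distribution. Writing $\Abb^\dagger(\xi) = |\xi|^{-k}m(\xi/|\xi|)$ with $m \in \Crm^\infty$ taking values in $\mathrm{Lin}(W,V)$ on the unit sphere, this is canonical and automatic when $k < d$ (then $\Abb^\dagger \in \Lrm^1_\loc(\R^d)$), while for $k \ge d$ one extends $\Abb^\dagger$ to a homogeneous tempered distribution by analytic continuation in the homogeneity parameter, which pins $\Abb^\dagger$ down only up to a distribution supported at the origin, i.e.\ a derivative of $\delta$ of order $k-d$. Since the Fourier transform carries homogeneous tempered distributions of degree $a$ to homogeneous tempered distributions of degree $-d-a$, the kernel $K_\Abb = (2\pi\mathrm{i})^k\Fcal^{-1}\Abb^\dagger$ is homogeneous of degree $-d+k$, well-defined up to a polynomial of degree at most $k-d$ (with no ambiguity when $k<d$). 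In particular $|K_\Abb(x)| \lesssim |x|^{k-d}$ near the origin (up to a logarithmic factor if $k=d$), which is locally integrable because $k \ge 1$; moreover, since a distribution that is homogeneous of a degree in $(-d,0)$ cannot be supported at the origin, for $k<d$ the kernel $K_\Abb$ coincides with this locally integrable homogeneous function.

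The smoothness $K_\Abb \in \Crm^\infty(\R^d\setminus\{0\};\mathrm{Lin}(W,V))$ is the standard fact that the Fourier transform of a homogeneous tempered distribution which is smooth away from the origin is itself smooth away from the origin: splitting $\Abb^\dagger = \chi\Abb^\dagger + (1-\chi)\Abb^\dagger$ with $\chi \in \Crm^\infty_c$ equal to $1$ near $0$, the term $\Fcal^{-1}(\chi\Abb^\dagger)$ is (the restriction of) an entire function since $\chi\Abb^\dagger$ has compact support, while for $\Fcal^{-1}((1-\chi)\Abb^\dagger)$ one checks $x^\beta D_x^\gamma\Fcal^{-1}((1-\chi)\Abb^\dagger) = c_{\beta,\gamma}\Fcal^{-1}(D_\xi^\beta[\xi^\gamma(1-\chi)\Abb^\dagger])$ for all multi-indices $\beta,\gamma$, and the symbol on the right lies in $\Lrm^1(\R^d)$ whenever $|\beta|-|\gamma| > d-k$, so the left-hand side is bounded; letting $|\beta|\to\infty$ for each fixed $\gamma$ forces $\Crm^\infty$-regularity on $\{x\ne0\}$. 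For the fundamental-solution identity, given $u \in \Crm^\infty_c(\R^d;V)$ we have $\widehat{\Acal u} = \Abb(\cdot)\,\hat u$, so, after matching the $(2\pi\mathrm{i})^k$-normalizations in the definitions of $\Abb$ and of $K_\Abb$, $\widehat{K_\Abb\star\Acal u} = \Abb^\dagger\,\Abb\,\hat u = \pi\,\hat u = \widehat{u_\Abb}$ by~\emph{(iii)} and the definition of $u_\Abb$ (cf.~\eqref{eq:represent}); since $K_\Abb$ is locally integrable with an $O(|x|^{k-d})$ singularity and controlled growth, the convolution $K_\Abb\star\Acal u$ is classically defined, and inverting the Fourier transform gives $u_\Abb = K_\Abb\star\Acal u$. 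Finally, $D^\ell K_\Abb$ is homogeneous of degree $-d+k-\ell$, and as soon as $\ell>k-d$ the polynomial ambiguity (present only for $k\ge d$) is killed by $D^\ell$, so $D^\ell K_\Abb$ is honestly homogeneous of that degree — exactly the range of $\ell$ asserted, the restriction being vacuous in the range $k<d$ relevant to Theorem~\ref{thm:limiting}.

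The part that requires the most care is not any single estimate but the bookkeeping of the previous paragraph in the regime $k\ge d$: correctly identifying the extension of $\Abb^\dagger$ modulo derivatives of $\delta$ and tracking how this turns into the polynomial ambiguity of $K_\Abb$ that the hypothesis $\ell>k-d$ is designed to remove. Apart from this, the proof is either a verbatim transcription of \cite[Lemma~2.1]{BousquetVanSchaftingen14} or routine homogeneous-distribution calculus, the only new input being the regularity of $\xi\mapsto\Abb(\xi)^\dagger$ provided by the constant-rank property.
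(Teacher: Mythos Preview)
Your proposal is correct and matches the paper's approach exactly: the paper does not give an independent proof but simply states that the lemma ``follows from a minor modification of~\cite[Lemma~2.1]{BousquetVanSchaftingen14}'', which is precisely what you carry out by replacing the one-sided inverse with the Moore--Penrose inverse $\Abb(\xi)^\dagger$ and verifying that the three properties \emph{(i)}--\emph{(iii)} needed in that argument survive under the constant-rank hypothesis. Your bookkeeping in the regime $k\ge d$ (the polynomial ambiguity of $K_\Abb$ and the role of the condition $\ell>k-d$) is a useful elaboration that the paper leaves implicit.
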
 
	
	Recalling the seminal ideas of Fonseca and M\"uller~\cite{FonsecaMuller99}, we can use this representation of $u_\Abb$ to deduce Sobolev estimates as one would do for elliptic operators. One simple verifies that that the map
	\[
	\xi \mapsto m(\xi) = \xi^\alpha|\xi|^{k-|\alpha|}\Abb(\xi)^\dagger
	\] 
	is homogeneous of degree $0$ and smooth on the sphere. By Mihlin's theorem, we deduce that the linear map
	\[
	T[f] \mapsto \Fcal^{-1}\bigl[m \widehat {f}\,\bigr], \qquad f \in \Crm_c^\infty(\R^d;V),
	\]
	is bounded from $\Lrm^p$ to $\Lrm^p$. In turn, the identity 
	\[
	\widehat{\partial^\alpha u_\Abb}(\xi) = \xi^\alpha\Abb(\xi)^\dagger \widehat{\Acal u}(\xi) = m(\xi) \, \Fcal(I_{k - |\alpha|} \star \Acal u)(\xi),
	\]
	where $I_s = c_{s,d}\,|\frarg|^{-n+s}$ is the  Riesz potential of order $s$,
	conveys that (see~\cite[Sec.~1.2]{AdamsHedberg96book})
	\begin{align*}
		\|D^{j} u_\Abb\|_{\Lrm^p} & \le C \|I_{k-j} \star \Acal u\|_{\Lrm^p}  \approx C\|\Acal u\|_{\dot\Wrm^{-{(k-j)},p}},  \label{eq:bound}
	\end{align*}
	holds for every positive integer $j \in (k-d,k)$ for some $C = C(d,p,k)$. Here, $\dot\Wrm^{-s,p}(\R^d)$ is the dual of $W^{s,p'}(\R^d)$, where  the homogeneous Sobolev space $\dot\Wrm^{s,p}(\R^d)$ is  defined as the closure of $\Wrm^{s,p}(\R^d)$ with respect to the semi-norm $\|D^su\|_{\Lrm^p}$.
	
	Notice that when $k > d$, one cannot directly infer the $\Lrm^p$-estimate 
	\begin{equation}\label{eq:no}
		\|u_\Abb\|_{\Lrm^p} \le C_p \|\Acal u\|_{\dot\Wrm^{-k,p}},
	\end{equation}
	which is a key estimate when dealing with localization arguments.
	The issue is that for $k$ larger than the dimension $d$, one can no longer make use of the equivalence between the Riesz potential norm $\|I_k f\|_{\Lrm^p}$ and the negative homogeneous norm $\|f\|_{\dot \Wrm^{-k,p}}$ induced by the duality $\Wrm^{-k,p} = (\Wrm^{k,p'})^*$. Instead, this $\Lrm^p$ estimate will be addressed by exploiting the kernel representation from Lemma~\ref{lem:kernel}.

	Finally, we recall that constant-rank operators are precisely the class of operators for which an exact annihilator exists. This was proven by Raita~\cite{Raita19a} (see also~\cite{Arroyo_Simental}), who showed that~\eqref{eq:cr} is equivalent with the existence of an operator $\Lcal$ from $W$ to $W$ (of the form~\eqref{eq:A}) with symbol $\Lbb$ satisfying
	\begin{equation}\label{eq:exact}
		\im \Abb(\xi) = \ker \Lbb(\xi) \qquad \text{for all $\xi \in \R^d \setminus \{0\}$.}
	\end{equation}

	\subsection{Local estimates}Now that we are equipped with a kernel representation, we show that 
	if $\phi \in \Crm^\infty_c(\R^d;[0,1])$ is a cut-off function satisfying $\phi \equiv 1$ on an open neighborhood of $\cl \omega \subset \R^d$ (where $\omega$ is open and bounded), then
	\[
	\Acal u \in \Mcal(\R^d;W) \quad \Longrightarrow \quad  (\phi u)_\Abb \in \Wrm^{k-1,q}(\omega) \;\, \text{for all $1 \le q < d/(d-1)$}.
	\]
	The proof of this result is an adaptation for constant-rank operators of the one for elliptic operators given in~\cite[Lemma 2.2]{Raita19b}, which itself is based on an argument of H\"ormander (see~\cite[Theorem~4.5.8]{Hormander94book}). Since the adaptation of the proof is not entirely trivial, we include it here for the convenience of the reader.
	\begin{lemma}\label{lem:regularity}
		Let $\Acal$ be a linear PDE operator of order $k$ from $V$ to $W$ that satisfies the constant-rank property. Let $\Omega' \subset \Omega$ be an open and bounded subdomain and let $\phi \in \Dcal(\R^d;[0,1])$ be a test function satisfying 
		\[
		\mathbf 1_{\Omega'} \le \phi \le \mathbf 1_\Omega \quad \text{and} \quad 0 < 
		\dist(\supp \partial^\alpha\phi,\Omega') \le \dist(\Omega',\partial\Omega)
		\] 
		for all $|\alpha| \ge 1$. If $1 \le q < d/(d-1)$ and $\mu \in \Mcal(\R^d;V)$ satisfies
		\[
		\Acal \mu = \sigma \in \Mcal(\R^d;W),
		\]
		then
		\[
		\|D^{k-1}(\phi \mu)_\Abb\|_{\Lrm^q(\Omega')}  \le C \, \big(|\mu|(\Omega) + |\Acal u|(\Omega)\big)
		\]
		for some $C = C(q,d,\Abb,\dist(\Omega',\supp D\phi),\dist(\Omega',\partial\Omega),|\supp \phi|)$.
	\end{lemma}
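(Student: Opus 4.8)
The plan is to reduce the estimate to the kernel representation $(\phi\mu)_\Abb = K_\Abb \star \Acal(\phi\mu)$ of Lemma~\ref{lem:kernel}, together with a Leibniz expansion of $\Acal(\phi\mu)$ that separates the single term carrying the (borderline) regularity from a family of commutator terms that are supported away from $\Omega'$ and therefore cost nothing. First I would observe that the identity $u_\Abb = K_\Abb \star \Acal u$, although stated in Lemma~\ref{lem:kernel} for $u \in \Crm^\infty_c$, extends to the compactly supported measure $u = \phi\mu$ by mollification: both sides depend continuously on $u$ in $\Dcal'(\R^d)$ — on the left because $\widehat{\phi\mu}$ is a polynomially bounded smooth function and $\pi$ is a smooth $0$-homogeneous Fourier multiplier, on the right because convolution with a mollifier converges to the identity. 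This gives $(\phi\mu)_\Abb = K_\Abb \star \Acal(\phi\mu)$ in $\Dcal'(\R^d;V)$.

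Next I would expand, using $\Acal = \sum_{|\alpha|=k}A_\alpha\partial^\alpha$ and the Leibniz rule,
\[
\Acal(\phi\mu) = \phi\,\Acal\mu + \sum_{|\alpha|=k}\ \sum_{0\neq\beta\le\alpha}\binom{\alpha}{\beta}A_\alpha\,(\partial^\beta\phi)\,\partial^{\alpha-\beta}\mu = \phi\sigma + \Rcal,
\]
where $\sigma = \Acal\mu$ and every summand of the remainder $\Rcal$ carries a derivative of $\phi$ of order $\ge 1$. Applying $D^{k-1}$ and transferring the derivatives onto the (locally integrable) kernel, one is left to bound $D^{k-1}K_\Abb \star (\phi\sigma)$ and $D^{k-1}K_\Abb \star \Rcal$ in $\Lrm^q(\Omega')$. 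In each term of the second convolution I would integrate by parts once more to move the remaining $\partial^{\alpha-\beta}$ off $\mu$; since $\partial^{\alpha-\beta-\gamma}(\partial^\beta\phi) = \partial^{\alpha-\gamma}\phi$ with $|\alpha-\gamma| = k-|\gamma| \ge |\beta| \ge 1$, this produces a finite linear combination of terms of the form $c\, D^{k-1+|\gamma|}K_\Abb \star \bigl((\partial^{\alpha-\gamma}\phi)\,\mu\bigr)$ in which the factor multiplying $\mu$ is a derivative of $\phi$ of positive order, hence supported in $\supp D\phi$.

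For the main term $D^{k-1}K_\Abb \star (\phi\sigma)$ I would invoke the last assertion of Lemma~\ref{lem:kernel}: $D^{k-1}K_\Abb$ is homogeneous of degree $k-d-(k-1) = -(d-1)$ (here $k-1 > k-d$), hence it lies in $\Lrm^q_\loc(\R^d)$ \emph{precisely because} $q < d/(d-1)$. Since $\phi\sigma$ is a measure of total mass $\le |\sigma|(\Omega)$ supported in $\supp\phi$, Minkowski's integral inequality then bounds $\|D^{k-1}K_\Abb \star (\phi\sigma)\|_{\Lrm^q(\Omega')}$ by $\|D^{k-1}K_\Abb\|_{\Lrm^q(B_R)}\,|\sigma|(\Omega)$ with $R \sim \diam\Omega' + \diam\supp\phi$. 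For each remainder term, the key point is that for $x \in \Omega'$ and $y \in \supp D\phi$ one has $|x-y| \ge \rho_0 := \dist(\Omega',\supp D\phi) > 0$, and there $D^{k-1+|\gamma|}K_\Abb$ — smooth on $\R^d\setminus\{0\}$ and homogeneous of degree $-(d-1)-|\gamma| < 0$ — is bounded by $C\rho_0^{-(d-1)-|\gamma|}$; thus the convolution is well defined on $\Omega'$ and bounded pointwise there by $C\,\|\partial^{\alpha-\gamma}\phi\|_{\Lrm^\infty}\,|\mu|(\Omega)$, so it contributes at most $C\,|\Omega'|^{1/q}\,|\mu|(\Omega)$ to the $\Lrm^q(\Omega')$-norm. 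Summing all contributions yields the claimed estimate, with the constant depending only on $q,d,\Abb$ and on $\phi$ through $\dist(\Omega',\supp D\phi)$, $\dist(\Omega',\partial\Omega)$ and $|\supp\phi|$.

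The step I expect to be the main obstacle is the bookkeeping in the Leibniz/integration-by-parts expansion: one must verify that after moving \emph{all} the $\mu$-derivatives onto the kernel, the coefficient multiplying $\mu$ is always a derivative of $\phi$ of positive order. This is exactly what makes the non-integrable singularities $D^{k-1+|\gamma|}K_\Abb \sim |x|^{-(d-1)-|\gamma|}$ (for $|\gamma|\ge 1$) harmless — they are only ever evaluated at distance $\ge \dist(\Omega',\supp D\phi)$ from the origin — while the single genuinely singular convolution $D^{k-1}K_\Abb \star (\phi\sigma)$ sits exactly at the integrability borderline, which is the reason for the restriction $q < d/(d-1)$.
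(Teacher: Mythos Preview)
Your proof follows essentially the same approach as the paper's: the kernel representation $(\phi\mu)_\Abb = K_\Abb \star \Acal(\phi\mu)$, the splitting $\Acal(\phi\mu) = \phi\sigma + [\Acal,\phi]\mu$, bounding the main term $D^{k-1}K_\Abb \star (\phi\sigma)$ via Young's inequality using that $D^{k-1}K_\Abb$ is homogeneous of degree $-(d-1)$ and hence in $\Lrm^q_\loc$ for $q<d/(d-1)$, and bounding the commutator terms by integrating by parts onto the kernel and exploiting the support separation $\dist(\Omega',\supp D\phi)>0$. The only omission is the degenerate case $d=1$, where $k-1 \not> k-d$ so Lemma~\ref{lem:kernel} does not yield homogeneity of $D^{k-1}K_\Abb$; the paper treats this separately by noting that then $|D^{k-1}K_\Abb| \lesssim 1 + |\log|\,\cdot\,||$, which still lies in $\Lrm^q_\loc$ for all $q<\infty$.
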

	\begin{proof}
		Define $w := \phi \mu \in \Dcal'(\R^d;V)$. By the discussion above and the representation of constant-rank operators we get
		\[
		w_\Abb = K_{\Abb}\star \Acal (\phi \mu),
		\]
		where $K_{\Abb}$ is the kernel given by Lemma~\ref{lem:kernel}. Following the argument in~\cite[Lemma~2.2]{Raita19b}, we find that
		\begin{align*}
			D^{k-1} w_\Abb & = (D^{k-1} K_{\Abb}) \star \Acal(\phi \mu) \\
			& = (D^{k-1} K_{\Abb}) \star (\phi\sigma) + (D^{k-1} K_{\Abb}) \star [\Acal,\phi](\mu)\\
			& =: \mathrm{I} + \mathrm{II}.
		\end{align*}
		Where the commutator $[\Acal,\phi]$ is the $(k-1)$\textsuperscript{th} order operator defined as 
		\begin{equation}\label{e:com}
			[\Acal,\phi] (\eta) =\Acal(\phi \eta) - \phi  \Acal(\eta).
		\end{equation}
		First, we bound $\|\mathrm{I}\|_{\Lrm^q}$. We distinguish two cases: if $d > 1$, then we may apply the result of Lemma~\ref{lem:kernel} with $\ell = k -1 > k - d$ to deduce that $D^{k-1} K_{\Abb}$ is homogeneous of degree $-d + 1$ and smooth away from $0 \in \R^d$.  Then, $|D^{k-1} K_{\Abb}| \lesssim |\frarg|^{-d+1} \in \Lrm^q_\loc(\R^d)$. Then, letting $R := 2\diam(\Omega)$, we see from Young's convolution inequality that
		\begin{align*}
			\|\mathrm{I}\|_{\Lrm^q(\Omega')} & \lesssim \|I_1 \star (\phi \sigma)\|_{\Lrm^q(\Omega')} \\
			& \le \|I_1\|_{\Lrm^q(B_R)} \cdot |\phi \sigma|(\R^d),
		\end{align*}
		where $I_1(x) = c_n |x|^{-d+1}$ is the $1$-Riesz potential. 
		If $d = 1$, then from the discussion in~\cite[Chapter~3]{Raita19b} it follows that $|D^{k-1} K_{\Abb}| \lesssim 1 + \log |\frarg|$ and therefore $D^{k-1} K_{\Abb} \in \Lrm^p_\loc(\R^d)$ for all $1 \le p < \infty$. In this case Young's inequality yields
		\[
		\|\mathrm{I}\|_{\Lrm^q(\Omega')} \lesssim \|D^{k-1} K_{\Abb}\|_{\Lrm^q(\Omega')} \cdot |\sigma|(\Omega).
		\]
		In both cases the constants involved into the inequalities depend solely on $q,d$ and $\Acal$.

		Next we bound $\|\mathrm{II}\|_{\Lrm^q}$. A modification of the proof~\cite[Lemma~2.2]{Raita19b} shows that $\mathrm{II}$ is smooth on $\Omega'$. Here, we need to obtain more information on the actual bounds of $\mathrm{II}$, for which we shall first assume that $\mu \in \Lrm^1_\loc(\Omega)$. 
		By the Leibniz rule, $[\Acal,\phi]$ is a (inhomogeneous) linear PDE operator of 
		order (at most) $k-1$ such that
		\[
		\supp\,[\Acal,\phi] \subset \supp {\partial^\gamma\phi}
		\]
		for all $\gamma$ with $1 \leq |\gamma| \leq k$.

		
		Let $x \in \Omega'$. By assumption $0 < \delta := \inf_{|\gamma| \ge 1} \dist(\supp \partial^\gamma\phi,\Omega') \le |y - x| \le \dist(\Omega',\partial \Omega)$ for all $y \in \supp D\phi$. Let $S$ be the closed annulus $\cl{B_\lambda} \setminus B_\delta$, where $\lambda := \dist(\Omega',\partial\Omega)$. Then, it follows from integration by parts that
		\begin{align*}
			|(D^{k-1} K_{\Abb} \star [\Acal,\phi](u))(x)| & \le \|K_\Abb\|_{\Wrm^{2k-1,\infty}(R)} \cdot \|\phi\|_{\Wrm^{2k,\infty}(R)} \cdot \|\mu\|_{\Lrm^1(S)} \\
			& \le C |\mu|(\Omega)
		\end{align*}
		with a constant $C = C(q,d,\Acal,\lambda,\delta)$. Here, we used the regularity of the kernel $K_\Abb \in \Crm^\infty(\R^d\setminus \{0\};\mathrm{Lin}(W,V))$ in passing to the last inequality. This proves that
		\[
		\|\mathrm{II}\|_{\Lrm^q} \le C |\supp \phi|^\frac{1}{q} |\mu|(\Omega),
		\]
		which together with the bound for $\mathrm{I}$ implies the desired bound when $\mu$ is in $\Lrm^1_\loc(\Omega)$. The general case for $\mu \in \Mcal(\Omega;V)$ follows from the bounds for $\mathrm{I}$ and $\mathrm{II}$ above and a standard mollification argument.
	\end{proof}
	
	We are now ready to prove the main result of this section, namely the following local canceling estimates.
	
	\begin{theorem}
		\label{thm:localVS}
		Let $\Acal$ be a linear PDE operator of order $k$, from $V$ to $W$. Further assume that $\Acal$ is canceling and satisfies the constant-rank property. If $\mu \in \Mcal(\Omega;V)$ satisfies
		\[
		\Acal \mu = \sigma \qquad \text{in $\Dcal'(\Omega;W)$}
		\]
		for some $\sigma \in \Mcal(\Omega;W)$, then $\sigma \in \Wrm^{-1,\frac{d}{d-1}}_\loc(\Omega)$. Moreover, for all compact subdomains $\Omega' \Subset \Omega$, 
		\[
		\left|\int_{\Omega'} \phi \cdot \mathrm d\sigma \right| \le C \big(|\mu|(\Omega) +  
		|\sigma|(\Omega) \big) \cdot \|D\phi\|_{\Lrm^d(\Omega')} \qquad \text{for all $\phi \in \Crm_c^\infty(\Omega';W)$},
		\]
		where $C = C(d,\Abb,\dist(\Omega',\partial\Omega),\diam(\Omega'))$.
	\end{theorem}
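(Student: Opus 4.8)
The plan is to reduce the statement, by duality, to a pairing estimate against test fields, to localise by means of the kernel representation of Lemma~\ref{lem:kernel}, and then to invoke a constant-rank, localised form of Van~Schaftingen's canceling inequality for the ``main'' term. First, since $\Omega'$ is a bounded Lipschitz domain and $\Crm_c^\infty(\Omega';W)$ is dense in $\Wrm^{1,d}_0(\Omega';W)$, on which $\|\cdot\|_{\Wrm^{1,d}}$ and $\|D\cdot\|_{\Lrm^d}$ are equivalent by the Poincaré inequality, and since $\Wrm^{-1,d/(d-1)}(\Omega') = (\Wrm^{1,d}_0(\Omega';W))^*$, it suffices to prove
\[
\Bigl|\int_{\Omega'}\phi\cdot\di\sigma\Bigr| \;\le\; C\bigl(|\mu|(\Omega)+|\sigma|(\Omega)\bigr)\,\|D\phi\|_{\Lrm^d(\Omega')}, \qquad \phi\in\Crm_c^\infty(\Omega';W),
\]
which simultaneously exhibits $\phi\mapsto\int_{\Omega'}\phi\cdot\di\sigma$ as an element of $\Wrm^{-1,d/(d-1)}(\Omega')$ of the asserted norm and gives $\sigma\in\Wrm^{-1,d/(d-1)}_\loc(\Omega)$.

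Next I would localise. Fix intermediate domains $\Omega'\Subset\Omega''\Subset\Omega$ and $\zeta\in\Dcal(\R^d;[0,1])$ with $\zeta\equiv1$ on $\Omega''$ and $\supp\zeta\Subset\Omega$; all constants produced will then depend only on $d$, $\Abb$, $\diam\Omega'$ and $\dist(\Omega',\partial\Omega)$. Since $\Acal^*\phi$ is supported where $\zeta\equiv1$, we have $\int_{\Omega'}\phi\cdot\di\sigma=\langle\Acal\mu,\phi\rangle=\langle\zeta\mu,\Acal^*\phi\rangle$, and by~\eqref{eq:represent} this equals $\langle(\zeta\mu)_\Abb,\Acal^*\phi\rangle$. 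Using the kernel representation of Lemma~\ref{lem:kernel} (valid for the compactly supported measure $\zeta\mu$ after mollification, exactly as in Lemma~\ref{lem:regularity}),
\[
(\zeta\mu)_\Abb \;=\; K_\Abb\star\Acal(\zeta\mu) \;=\; K_\Abb\star(\zeta\sigma) \;+\; K_\Abb\star[\Acal,\zeta]\mu,
\]
so that $\int_{\Omega'}\phi\cdot\di\sigma = \langle K_\Abb\star(\zeta\sigma),\Acal^*\phi\rangle + \langle K_\Abb\star[\Acal,\zeta]\mu,\Acal^*\phi\rangle =: A+B$. The commutator term $B$ is elementary: $[\Acal,\zeta]$ has order $\le k-1$ with coefficients supported in $\supp D\zeta\subset\Omega\setminus\Omega''$, at positive distance from $\Omega'$, so by the smoothness of $K_\Abb$ away from the origin (as in the proof of Lemma~\ref{lem:regularity}) the function $\Acal(K_\Abb\star[\Acal,\zeta]\mu)$ is smooth on a neighbourhood of $\overline{\Omega'}$ with $\Lrm^\infty(\Omega')$-norm $\le C|\mu|(\Omega)$; Hölder's inequality and Poincaré then give $|B|=|\langle\Acal(K_\Abb\star[\Acal,\zeta]\mu),\phi\rangle|\le C|\mu|(\Omega)\|\phi\|_{\Lrm^1(\Omega')}\le C|\mu|(\Omega)\|D\phi\|_{\Lrm^d(\Omega')}$.

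The heart of the matter is the main term $A$, and this is where the canceling hypothesis enters. Integrating by parts so as to leave precisely one derivative on $\phi$, one writes $A = \pm\int_{\Omega'}\bigl(D^{k-1}K_\Abb\star(\zeta\sigma)\bigr):\Qcal[D\phi]$ for a fixed constant-coefficient linear map $\Qcal$, equivalently $A=\langle\zeta\sigma,\Theta[D\phi]\rangle$ where $\Theta$ is a Calderón--Zygmund-type operator whose kernel is built from $D^{k-1}K_\Abb$ --- homogeneous of degree $-(d-1)$ and smooth on the sphere by the constant-rank property and Lemma~\ref{lem:kernel}. For a general finite measure the a priori estimate only places $D^{k-1}K_\Abb\star(\zeta\sigma)$ in weak-$\Lrm^{d/(d-1)}$, which does \emph{not} suffice; the canceling condition $\bigcap_{|\xi|=1}\im\Abb(\xi)=\{0\}$ --- through the mean-zero / vanishing-moment structure it forces on the kernel of $\Theta$ --- is exactly what upgrades this to $|A|\le C|\sigma|(\Omega)\|D\phi\|_{\Lrm^d(\Omega')}$. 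This is the constant-rank, localised counterpart of Van~Schaftingen's canceling inequality~\cite[Theorem~1.3]{VanSchaftingen13}, which I would establish by reproducing his argument --- a directional (fundamental-theorem-of-calculus) representation of $\phi$ combined with the cancellation of the kernel against it, and the resulting gain of integrability --- now with the constant-rank fundamental solution $K_\Abb$ and the projection $\pi(\xi)$ of Lemma~\ref{lem:kernel} replacing the elliptic inverse, i.e.\ the same adaptation (in the elliptic local case) as~\cite[Lemma~2.2]{Raita19b}. I expect this step to be the main obstacle; everything else is localisation bookkeeping.

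Combining the bounds for $A$ and $B$ gives the pairing estimate of the first paragraph, hence the theorem. The reduction to smooth $\mu$ implicit above, and the passage back to an arbitrary $\mu\in\Mcal(\Omega;V)$, are handled by mollifying $\mu$ on slightly smaller domains with constants uniform in the mollification parameter and passing to the limit.
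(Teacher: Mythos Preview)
Your localisation and the treatment of the commutator term $B$ are fine, but there is a genuine gap in the main term $A$. Van~Schaftingen's mechanism --- whether stated as the canceling estimate~\cite[Theorem~1.3]{VanSchaftingen13} or its cocanceling dual~\cite[Theorem~1.4]{VanSchaftingen13} --- applies only to data satisfying the appropriate compatibility constraint: in cocanceling form, one needs an $\Lcal$-free \emph{measure}, where $\Lcal$ is the annihilator of $\Acal$ supplied by~\eqref{eq:exact}. Your datum $\zeta\sigma$ is not $\Lcal$-free globally: although $\Lcal\sigma=\Lcal\Acal\mu=0$ on $\Omega$, the cutoff produces $\Lcal(\zeta\sigma)=[\Lcal,\zeta]\sigma\neq0$. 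The natural $\Lcal$-free object $\Acal(\zeta\mu)=\zeta\sigma+[\Acal,\zeta]\mu$ is only a distribution of order $k-1$, not a measure; and the projected object $\Acal(K_\Abb\star(\zeta\sigma))$, while $\Lcal$-free, is merely a Calder\'on--Zygmund image of a measure, hence only in weak-$\Lrm^1$. The ``vanishing-moment'' cancellation you invoke is a property of the \emph{range} of $\Abb$, not of $D^{k-1}K_\Abb$ acting on arbitrary $W$-valued measures, so there is no reason the upgrade from weak-$\Lrm^{d/(d-1)}$ to strong $\Lrm^{d/(d-1)}$ should go through for $\zeta\sigma$. Reproducing Van~Schaftingen's directional argument here would require precisely the compatibility you have destroyed by cutting off.

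The paper closes this gap by a different device that you are missing: a \emph{two-layer} cutoff combined with an appeal to the cocanceling estimate as a black box. One first sets $w:=\phi_2\mu$ and uses Lemma~\ref{lem:regularity} to place $w_\Abb\in\Wrm^{k-1,q}$ near $\Omega'$ for some $1<q<d/(d-1)$; then one sets $v:=\phi_1 w_\Abb$ with $\mathbf{1}_{\Omega'}\le\phi_1\le\phi_2$ nested so that $\phi_1\,\partial^\alpha\phi_2=0$ for $|\alpha|\ge1$. The point is that now $[\Acal,\phi_1]$ hits a $\Wrm^{k-1,q}$-function rather than a bare measure, so $[\Acal,\phi_1]w_\Abb\in\Lrm^q$, and consequently $\Acal v=\phi_1\sigma+[\Acal,\phi_1]w_\Abb$ is a genuine compactly supported \emph{measure} with $|\Acal v|(\R^d)\lesssim|\mu|(\Omega)+|\sigma|(\Omega)$. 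It is automatically $\Lcal$-free (since $\Lcal\Acal=0$), and it equals $\sigma$ on $\Omega'$. Van~Schaftingen's cocanceling estimate~\cite[Theorem~1.4]{VanSchaftingen13} then applies directly to $f=\Acal v$ and yields the pairing bound --- no reproving of the inequality in the constant-rank setting is needed. The constant-rank hypothesis enters only to supply the annihilator $\Lcal$ via~\eqref{eq:exact} and the preliminary $\Wrm^{k-1,q}$-regularity of $w_\Abb$ via Lemma~\ref{lem:regularity}.
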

	\begin{proof}
		Let $\phi_1,\phi_2 \in \Crm_c^\infty(\Omega;[0,1])$ be two test functions satisfying $\mathbf 1_{\Omega'} \le \phi_1 \le \phi_2$ and whose supports are nested sufficiently far from each other so that $(\partial^\alpha \phi_2) \phi_1 = 0$ for all $|\alpha| > 0$ and also
		\[
		\|\partial^{\alpha}\phi_i\|_\infty \le C(k)\dist(\Omega',\partial \Omega)^{-2k}, \qquad |\alpha| \le 2k.
		\] 
		Define $w := \phi_2 \mu$ and let $q := (2d -1)/2(d-1)$. By the estimates of Lemma~\ref{lem:regularity} and the properties of $\phi_1,\phi_2$ we deduce that
		\[
		\|w_\Abb\|_{\Wrm^{k-1,q}(\Omega')} \le C\big( |\mu|(\Omega) + |\sigma|(\Omega)\big),
		\]
		for some $C = C(d,\Abb,\dist(\Omega',\partial \Omega),\diam(\Omega'))$. Next, we define $v := \phi_1 w_\Abb$, which satisfies
		\begin{align*}
			\Acal v & = \phi_1 \Acal(\phi_2 \mu) + [\Acal,\phi_1](w_\Abb) \\
			& = \phi_1 \phi_2 \sigma + \phi_1[\Acal,\phi_2](\mu) + [\Acal,\phi_1](w_\Abb)
		\end{align*}
		in the sense of distributions on $\Omega$. Now, use that the derivatives of $\phi_2$ vanish on $\{\phi_1 \equiv 1\}$ to find that
		\[
		\Acal v = \phi_1 \sigma + [\Acal,\phi_1](w_\Abb)\qquad \text{in $\Dcal'(\Omega;W)$},
		\]
		where the right-hand side is the sum of a measure and an $\Lrm^q$-function, both of which can be estimated in terms of $|\mu|(\Omega) + |\sigma|(\Omega)$ up to a constant depending on $d,\Abb, \diam(\Omega')$ and $\dist(\Omega',\partial\Omega)$.
		Identifying $v$ with its trivial extension by zero on $\R^d$, we have constructed a function $v \in \Wrm^{k-1,q}(\R^d;V)$ satisfying
		\begin{gather}
			\Acal v \in \Mcal(\R^d;W), \\ 
			|\Acal v|(\R^d) \le C\big( |\mu|(\Omega) + |\sigma|(\Omega)\big),\label{eq:t2}\\
			\Acal v = \sigma \quad \text{in the sense of distributions on $\Omega'$}\label{eq:t1}.
		\end{gather}
		We are now in position to apply global cocanceling estimates from~\cite[Theorem~1.4]{VanSchaftingen13}: Let $\Lcal$ be an annihilator of $\Acal$ such that~\eqref{eq:exact} holds. Then, the canceling assumption on $\Acal$ implies that $\Lcal$ is cocanceling as defined in~\cite[Definition~1.3]{VanSchaftingen13}. Applying a version of the aforementioned theorem with (with $\Lcal$ and $f = \Acal v$) to conclude that
		\[
		\sup_{\substack{\phi \in \Crm_c^\infty(\Omega'),\\\|D\phi\|_{L^d} \le 1}}	\left|\int_{\Omega'} \varphi \cdot\mathrm{d}\sigma \right|   \stackrel{\eqref{eq:t1}}\le \|\Acal v\|_{\dot \Wrm^{-1,d/(d-1)}} \lesssim  |\Acal v|(\R^d)  \stackrel{\eqref{eq:t2}}\lesssim |\mu|(\Omega) + |\sigma|(\Omega),
		\]
		where the constants carried through the inequalities depend only on $d,\Acal,\Omega'$ and $\dist(\Omega',\partial\Omega)$. 
	\end{proof}

	\section{Proofs of the main results}\label{sec:main}
	We are now in position to give a proof of our main theorem.
	
	\begin{proof}[Proof of Theorem~\ref{thm:Lp}]Without loss of generality we may assume that $V = \R^m$, $L = \R^\ell \times \{0\}^{m - \ell}$ and $\eps \le 2^{-1}$.
		Write $\mu = P\nu$ with $\nu \in \Mcal^+(\Omega)$ and $P \in \Lrm^\infty(\nu;\R^m)$ to denote the polar decomposition of $\mu$.  For a vector $z \in \R^m$ let us adopt the notation $z = (v,w)$ where $v \in \R^\ell$ and $w \in \R^{m - \ell}$. Recall that, under these conventions our main assumption on $P$ reads as
		\begin{equation}\label{eq:pc}
			P(x) \in \Kcal_\eps \subset \setBB{(v,w) \in \R^m}{|w| \le \frac{\eps |v|}{\sqrt{1 - \eps^2}}}, \quad  \text{$\nu$-a.e. $x \in \Omega$},
		\end{equation}
	where $\Kcal_\eps$ is convex
		and $\eps > 0$ is a sufficiently small real number, to be constrained below.

		\proofstep{Reduction to smooth maps.} We claim that establishing a priori estimates for smooth functions suffice. Indeed, if $\{\rho_t\}_{t > 0}$ is family of standard probability mollifiers at scale $t>0$, then our assumption that $\Kcal_\eps$ is a convex cone is crucial to deduce that $\mu_t \coloneqq \mu \star \rho_t$ satisfies 
			\[
			\mu_t(x) \in \Kcal_\eps \qquad \text{for every $x \in \Omega^t \coloneqq \set{x \in \Omega}{\dist(x,\partial\Omega)>t}$.}
			\]
			Then, the validity of the sought estimates for smooth functions leads to the estimate
			\[
			\|\mu_t\|_{\Lrm^p(\Omega')} \le C_t\big(|\mu_t|(\Omega^t) + |\sigma_t|(\Omega^t)\big)
			\] 
			where $C_t  = C(d,\Acal,L,\Omega',\dist(\Omega',\partial\Omega_t))$. As it will become apparent from the development of the proof, it holds $C_t \ge C_s$ for all $t \ge s > 0$ and therefore from Young's inequality for convolutions it holds
			\begin{align*}
				\liminf_{t \to 0^+}\|\mu_t\|_{\Lrm^p(\Omega')} & \le \liminf_{t \to 0^+}\, C_t\big(|\mu_t|(\Omega^t) + |\sigma_t|(\Omega^t)\big) \\
				& \le C\left(|\mu|(\Omega) + |\sigma|(\Omega)\right),
			\end{align*}
			where $C = C(d,\Acal,L,\Omega',\dist(\Omega',\partial\Omega)$. This proves that $\mu_t$ is uniformly bounded in $\Lrm^p(\Omega')$ provided and since $\mu_t \to \mu$ in the sense of distributions on $\Omega$, we conclude from the lower semicontinuity of the $\Lrm^p$-norm with respect to distributional convergence that
			\[
			\|\mu\|_{\Lrm^p(\Omega')}  \le C\left(|\mu|(\Omega) + |\sigma|(\Omega)\right),
			\] 
			which is precisely what we aim to prove. Hence, in all that follows, we may assume that $\mu$ is a smooth map (so that $P$ and $\nu$ are also smooth) on $\Omega$.
			
			\begin{remark}[A priori $L^p$ estimates]\label{rem:a_priori}
			This previous step is the only place where the convexity of $\Kcal$ is used (to ensure the existence of  $\Kcal$-constrained regularizations). It is worth mentioning that if the convexity assumption on $\Kcal$ is dispensed with (in the sense that it is allowed to be a double-sided cone), then our proof still gives the estimate (for $p$ as in the range of Theorem~\ref{thm:Lp}) if a priori we know that $\mu$ is $L^p$-integrable, that is,
			\[
				\mu \in L^p(\Omega) \quad \Longrightarrow \quad \|\mu\|_{L^p(\Omega')} \lesssim |\mu|(\Omega) + |\Acal \mu|(\Omega).
			\] 
To back this claim up, we simply observe the following: 
\begin{enumerate}[(a)]
\item the estimates for $q(\ell)$ derived in Step~3 (below) hold for all $1 < p < \infty$, provided that $\mu \in L^p(\Omega)$, \item the estimates in Step~4 (below) hold for all $p$ as in the range of Theorem~\ref{thm:Lp}. 
\end{enumerate}
			\end{remark}
		
		\proofstep{Step~1.} Our assumption on $L$ implies that $L \cap \Lambda_\Acal \cap \Sbb^{m-1} = \emptyset$. 
		%
		Since $L$ and $\Lambda_\Acal$ are cones,  it follows that  $\delta_L:= \dist(\Lambda_\Acal \cap \Sbb^{m-1},L) > 0$. In all of the following we  assume that $\eps \in (0,\delta_L)$, so that $\Lambda_\Acal \cap \Kcal_\eps = \{0\}$. 
		

		\proofstep{Step~2.}
		Before delving into the main argument, let us introduce some handy notation: We decompose the polar $P$ as the direct sum $P_0 + P_1$, where
		\[
		P_0 := \pi_L P, \qquad P_1 := P - \pi_L P,
		\]
		where $\pi_L \colon \R^m \to L$ is the orthogonal projection onto $L$. By construction, both $P_0$ and $P_1$ are $\Lrm^\infty$-functions and hence we may define a measurable tensor field by setting 
		\[
		M :=  \biggl(\frac{P_1 \otimes P_0}{|P_0|^2}\biggr)\chi_{\{|P| > 0\}} \in \Lrm^\infty(\Omega;\R^{m \times m}).
		%
		\]
		For future reference, we record that~\eqref{eq:pc} conveys the uniform bound
		\begin{equation}\label{eq:M}
			\|M\|_{\Lrm^\infty(\Omega)} \le \frac{|P_1|}{|P_0|} \le \frac{\eps}{\sqrt{1 - \eps^2}} \leq 2\eps.
		\end{equation}

		
		\proofstep{Step~3.} Let us first assume that $\mu$ is compactly supported in $\Omega'$ and that $\mu$ satisfies, for some $1 < q < \infty$ and some integer $0 \le \ell \le k-1$,
		\[
		\Acal \mu = \gamma \quad \text{in $\Dcal'(\Omega;V)$},
		\]
		for some distribution $\gamma \in \Wrm^{-k+\ell,q}(\Omega;W)$ supported on $\supp \mu \subset \Omega'$. Henceforth, compactly supported measures and distributions will often be considered as measures and distributions on $\R^d$ through their trivial extensions. The key argument is to express the source term $\gamma$ as a perturbation of a well-behaved elliptic operator acting on $P_0 \nu$. More precisely,  we write $\Bcal := \Acal \circ \pi_L$ so that 
		\begin{align}\label{eq:both}
			\gamma  =  \Acal \mu 
			= \Bcal (P_0 \nu) + \Acal( P_1 \nu)  
			=\big(\Bcal + \Acal  M \big) P_0 \nu.
		\end{align}
		Since $P_0\nu \in \Lrm^1(\Omega;\R^m)$, then
		\[
		\dprb{M P_0 \nu, \psi}_{\Dcal' \times \Dcal} = \dprb{P_0\nu, M^T\psi}_{\Lrm^1 \times \Lrm^\infty} \qquad \text{for all $\psi \in \Dcal(\R^d;\R^m)$}.
		\]
		In particular, the $(\Lrm^1,\Lrm^\infty)$-adjoint of
		\[
		\Rcal = \Bcal^* \Acal M
		\]
		is given by
		\[
		\Rcal^* = M^T \Acal^* \Bcal,
		\]
		which is an operator of order $2k$ that can be written in the form
		\[
		\sum_{|\alpha| = 2k} R_\alpha(x)^* \,
		\partial^\alpha
		\]
		where the coefficients $x \mapsto R_\alpha(x)^*$ are Lebesgue-measurable and satisfy 
		\begin{equation}\label{eq:eM}
			\|R_\alpha^*\|_{\Lrm^\infty} \le C \|M\|_{\Lrm^\infty} \le C_1\eps
		\end{equation}
		for some $C_1 = C_1(\Acal,L)$.
		Applying $\Bcal^*$ {and adding $P_0\nu$} to both sides of~\eqref{eq:both}, we obtain
		\begin{equation}\label{eq:transition}
			P_0 \nu+\Bcal^*\gamma = (\id+\triangle_\Bcal + \Rcal) P_0 \nu =: T \in \Dcal(\Omega;\R^m),
		\end{equation}
		where $\triangle_\Bcal := \Bcal^* \Bcal$. In particular $\supp (T) \subset \Omega'$. Integration by parts yields (recall that $\triangle_\Bcal$ is self-adjoint with respect to the $\Lrm^2$-pairing)
		\begin{equation}\label{eq:T}
				\dpr{T,\phi} = \dpr{P_0 \nu,(\id +\triangle_\Bcal + \Rcal^*)\phi}, \qquad \phi \in W^{2k,s}(\Omega;\R^m), \quad s \in (1,\infty).
			\end{equation} 
			
			In the next lines, we shall often identify smooth maps as elements of a negative Sobolev space.
		From Lemma~\ref{lem:embed} in the form~\eqref{eq:qell_embed} and in conjunction with~\eqref{eq:T}, we deduce (recall that $\Bcal^*$ is a homogeneous operator of order $k$ and $\gamma$ is a compactly supported distribution on $\Omega' \Subset \R^d$) 
		\begin{align}
			\|T\|_{\Wrm^{-2k,q(\ell)}(\Omega)} & = \|P_0 \nu+\Bcal^*\gamma\|_{\Wrm^{-2k,q(\ell)}(\Omega)} \notag\\
			&  \le \|\gamma\|_{\Wrm^{-k,q(\ell)}(\Omega)}+\|P_0\nu\|_{\Wrm^{-2k,q(\ell)}(\Omega)}
			\notag\\
			& \lesssim_{q,d,k,\Omega'} \|\gamma\|_{\Wrm^{-k+\ell,q}(\Omega)}+\|P_0\nu\|_{\Wrm^{-2k,q(\ell)}(\Omega)}.  \label{eq:s}
		\end{align}
		Here, $q(\ell)$ is the exponent from Definition~\ref{def:qell}.
		
		In all that follows we further assume $2C_1\eps \le \delta$, where $\delta = \delta(d,q(\ell)',\Omega',\Acal,L)$ is the smallness constant from Corollary~\ref{cor:perturbation} and  $q(\ell)'$ is the dual exponent of $q(\ell)$. Applying the results of the aforementioned corollary to $\id +\triangle_\Bcal + \Rcal^*$, the smallness~\eqref{eq:eM} implies that given  $f \in  \Dcal(\Omega';\R^m)$, 
		we may find $\eta \in \Wrm^{2k,q(\ell)'}(B_1(\Omega');\R^m)$ satisfying \
		\begin{equation}\label{eq:2}
			(\id+\triangle_\Bcal + \Rcal^*)\eta = f \quad \text{and} \quad \|\eta\|_{\Wrm^{2k,q(\ell)'}(B_1(\Omega'))} \le C_2 \|f\|_{\Lrm^{q(\ell)'}(\Omega')}
		\end{equation}
		where $C_2 = C_2(d,q,\Acal,L)$ and we have set 
		\[
		B_1(\Omega')=\{ x\in \R^d: \dist(x, \Omega)\le 1\}.
		\]
		Note also  that $0 \le \ell \le k-1$ so that the constant does not depend on $\ell$ but rather on the order $k$ of $\Acal$.
		Now, let $\chi \in \Crm^\infty_c(\Omega)$ be a cut-off function satisfying $\mathbf 1_{\cl{\Omega'}} \le \chi \le \mathbf 1_{\Omega \cap B_1(\Omega')}$ and 
		\[
		\|\chi\|_{C^{2k}} \le C(k)\lambda^{2k}, \qquad \lambda := 1 \wedge \dist(\Omega',\partial \Omega)^{-1}.
		\]
		Notice that with this in mind $T[\chi\eta]$ is well-defined and equals $\dpr{\pi_L[\mu],f}$. Hence, we obtain the following duality estimate:
		\begin{align}
			|\dpr{P_0\nu,f} | 
			& \le \|T\|_{\Wrm^{-2k,q(\ell)}(\Omega)} \cdot  \| \chi\eta\|_{\Wrm^{2k,q(\ell)'}(\Omega)} \notag 
			\\
			& \stackrel{\eqref{eq:s}-\eqref{eq:2}}\le C(q,d,\Acal,L,\Omega',\lambda) \,(\|\gamma\|_{\Wrm^{-k+\ell,q}(\Omega)}+\|P_0\nu\|_{W^{-2k,q(\ell)}(\Omega)}) \cdot \|f\|_{\Lrm^{q(\ell)'}(\Omega')}.  \label{eq:pear}
		\end{align} 
		By the dual definition of the $\Lrm^{q(\ell)}$-norm we conclude that 
		\begin{equation}\label{eq:ll}
			\begin{split}
				\|P_0\nu\|_{\Lrm^{q(\ell)}(\Omega')} \le C(q,d,\Acal,L,\Omega',\lambda)\,(\|\gamma\|_{\Wrm^{-k+\ell,q}(\Omega)}+\|P_0\nu\|_{W^{-2k,q(\ell)}(\Omega)}).
			\end{split}
		\end{equation}
		
		\proofstep{Step~4.} We proceed now prove the general case, which follows by a  localization argument and the previous step.
		
		Let $p$ be as in the statement of the theorem. Notice that by the definition of $p$ we may always find $q = q(p,d,k)$ satisfying
		\[
		1 < q < \frac{d}{d-1} \quad \text{and} \quad p = q(k-1) < \infty,
		\]
		where $q(k-1)$ is the exponent defined in Definition~\ref{def:qell}. Our next objective will be to establish $\Lrm^{q(k-1)}$-estimates for localizations of $\mu$. In all that follows we  assume that 
		\[
		\eps = \eps(p,d,\Acal,L,\Omega') < \min\set{ C_1^{-1}\delta(q(\ell)',d,\Acal,L,\Omega')}{0 \le \ell \le k-1},
		\]
		so that~\eqref{eq:ll} holds for all $0 \le \ell \le k-1$ and some constant $C = C(d,p,\Acal,L,\Omega',\lambda)$ whenever $\Acal \mu = \gamma \in \Wrm^{-k+\ell,q}(\Omega)$, $\supp \mu \subset \Omega'$, and $P \in \Kcal_\eps$ almost everywhere with respect to $\nu$.
		
		First, let us consider a  sequence of nested domains   $\Omega_k = \Omega' \Subset \Omega_{k-1} \cdots \Subset \Omega_{1} \Subset \Omega_{0} = \Omega$ with Lipschitz boundary  and satisfying
		\[
		\dist(\Omega_{r+1},\partial \Omega_{r}) \ge (2k)^{-1} (1 \wedge \dist(\Omega',\partial\Omega))\qquad \text{for all $r = 0,\dots,k-1$}.	
		\]
		For each such $r$, we may find a cut-off function $\phi_r \in \Crm_c^\infty(\Omega)$ satisfying
		\[
		\|D^\ell \phi_r\|_{\Lrm^\infty}\le C(k) \lambda^{k} \quad \text{for $\ell \in \{0,1,\ldots, k\}$}, \qquad \mathbf 1_{\Omega_{r+1}} \le \phi_r \le \mathbf 1_{\Omega_{r}},
		\]
		where as above $\lambda = 1 \wedge \dist(\Omega',\partial\Omega)^{-1}$.
		In all of the following, we adopt the convention that the bounding constant $C$ may increase from inequality to inequality (or from line to line), but remains to depend solely on $p,d,k,\Acal,L$ and $\Omega'$; additional dependencies such the dependency on $\lambda$ will be carried next to the constant. 
		
		Define the localized measures $\mu_r := \phi_r \mu$ for all $r = 0,\dots,k-1$. The main argument consists of bootstrapping the regularity of $\mu_r$ at each step $r = 0,\dots,k-1$, until we reach the desired $\Lrm^{q(k-1)}$-bounds for $\mu$ on $\Omega_k = \Omega'$. 
		The key is to observe that $\mu_r$ satisfies the distributional equation 
		\[
		\Acal \mu_r =\gamma_r := \phi_r \sigma + [\Acal,\phi_r](\mu_{r-1}),
		\]
		where the commutator $[\Acal,\phi]$ is defined in \eqref{e:com} and it is of order $k-1$ and whose coefficients depend solely on the principal symbol of $\Acal$ and  $\|\phi_r\|_{C^k}$.

		With regard to the regularity of the localization term $[\Acal,\phi_r](\mu_{r-1})$ (which a priori only belongs to $\Wrm^{-k,q}(\Omega)$), we will show that, at the $r$'th step of the iteration, the measure $\mu_{r}$ belongs to $\Lrm^{q(r)}$. This, in turn, will allow us to lift the regularity of $[\Acal,\phi_{r+1}](\mu_r)$ to $\Wrm^{-k,q(r+1)}$  and subsequently bootstrap it onto the regularity of $\mu_{r+1}$.
		
		Let us begin with the estimate for $r = 1$. Using that $[\Acal,\phi_r]$ is an operator of order $k-1$ with $\Crm^\infty$-coefficients depending only on $\partial^\alpha \phi_r$ ($|\alpha| \le k$) and the coefficients of $\Acal$, we have
		\begin{align*}
			\|[\Acal,\phi_1] (\mu_1)\|_{\Wrm^{-k,s}}
			& \le  C \lambda^k \|\mu_0\|_{\Wrm^{-1,s}}
		\end{align*}
		for all $1 < s < \infty$.
		Since also $\mu_{1},\gamma_1$ are compactly supported on $\Omega_1$, the bound from~\eqref{eq:ll} with $\ell = 0$  and $\Omega_1$ instead of $\Omega'$ yields
		\begin{align}
			\|P_0\nu_1\|_{\Lrm^q}
			& \le C \big(\|\phi_1\sigma\|_{\Wrm^{-k,q}} +\|P_0\nu_1\|_{\Wrm^{-2k,q}} + \|[\Acal,\phi_1] (\mu_1)\|_{\Wrm^{-k,q}}\big)  \notag\\
			& \le C \big(\|\phi_1\sigma\|_{\Wrm^{-k,q}} +\|P_0\nu_1\|_{\Wrm^{-2k,q}} + C(k) \lambda^k  \|\mu_0\|_{\Wrm^{-1,q}}\Big)  \notag \\
			&  \le  C\lambda^k \big(|\sigma|(\Omega) + |\mu|(\Omega)\big),  \label{eq:boo}
		\end{align}
		where the last inequality follows from Morrey's embedding (see Lemma~\ref{lem:embed}~(d)) and the fact that $1 < q < d/(d-1)$.
		The pointwise constraint $|\mu| \le (1 + \eps) |P_0\nu_1|$ further implies that
		\begin{align*}
			\|\mu\|_{\Lrm^q(\Omega')} & \le \|\mu_{1}\|_{\Lrm^q(\Omega_{1})} \\
			& \le (1+\eps) \|P_0\nu_1\|_{\Lrm^q} \\
			& \le (1 + \eps) C\lambda^k \big(|\sigma|(\Omega) + |\mu|(\Omega)\big).
		\end{align*}
		Notice that in the case $k=1$ (when $q(k-1) = q(0) = q \ge p$), this already proves the desired bound.

		We shall henceforth assume that $k \ge 2$. 
		Let us now address the derivation of the $\Lrm^{q(r)}$-estimates at the $r$'th step. 
		Assuming in an inductive fashion that $\mu_{r-1} \in \Lrm^{q(r-1)}$, we estimate
		\[
		\|[\Acal,\phi_{r}](\mu_r)\|_{\Wrm^{-k,q(r)}}   
		\le C \lambda^k \|\mu_{r-1}\|_{\Wrm^{-1,q(r)}}  
		\le C\lambda^k \|\mu_{r-1}\|_{\Lrm^{q(r-1)}}.
		\]
		In the last line we observed that with $q(r)$ from Definition~\ref{def:qell}, we have (see~\eqref{eq:qell_embed}) that $\Lrm^{q(r-1)}(\Omega_1) \embed \Wrm^{-1,q(r-1)(1)}(\Omega_1) = \Wrm^{-1,q(r)}(\Omega_1)$.
		
		Similarly, we recall from Lemma~\ref{lem:embed} that for $1 \le r \le k-1$,
		\[
		\|\phi_r \sigma\|_{\Wrm^{-k,q(r)}} \le C \|\phi_r \sigma\|_{\Wrm^{-k+r,q}} \le C\|\phi_r\sigma\|_{\Wrm^{-1,q}} \le C|\sigma|(\Omega).
		\]
		Using once more the bounds in~\eqref{eq:ll} with $\ell = 0$, $\Omega_1$ and now with $q(r)$ we find that 
		\begin{align*}
			\|\mu_{r}\|_{\Lrm^{q(r)}} & \le (1 + \eps) \big( \|\phi_{r}\sigma\|_{\Wrm^{-k,q(r)}} +|P_0\nu|(\Omega)+ \|[\Acal,\phi_{r}](\mu_{r-1})\|_{\Wrm^{-k,q(r)}} \big)\\
			& \le C\lambda^k (1 + \eps)\big( |\sigma|(\Omega) +|P_0\nu|(\Omega)+ \|\mu_{r-1}\|_{\Lrm^{q(r-1)}} \big).
		\end{align*}
		Thus, the $r$'th iteration can be estimated as ($1 \le r \le k-1$)
		\[
		\|\mu_r\|_{\Lrm^{q(r)}(\Omega)} \le C r \lambda^{rk}(1 + \eps)^r \big(|\sigma|(\Omega) + |\mu|(\Omega)\big).
		\]
		This completes the inductive step.
		
		For $r = k-1$ we then have shown (recall that $\eps < 1$)
		\begin{align*}
			\|\mu\|_{\Lrm^{p}(\Omega')} & \lesssim_{p,d,R}  \|\mu_{k-1}\|_{\Lrm^{q(k-1)}(\Omega)} \\
			& \le C (p,d,\Acal,L,\Omega') k \lambda^{k^2}  \big( |\sigma|(\Omega) + |\mu|(\Omega) \big).
		\end{align*}
		This finishes the proof. 
	\end{proof}
	

	\begin{proof}[Proof of Corollary~\ref{rem:Afree}]
		The idea is that the regularity of $\sigma$ acts as a barrier for the regularity of $\mu$ in the sense that one can only expect (for $k < d$)
		\[
		|\sigma|(\Omega) < \infty \qquad \Longrightarrow \qquad \|\mu\|_{\Lrm^p(\Omega')} < \infty \qquad \text{for $p < d/(d-k)$.}
		\]
		In the absence of the source term $\sigma$ (when $\mu$ is $\Acal$-free), there is no such obstruction to improve the integrability of $\mu$.
		This can be easily circumvented by artificially constructing a higher-order PDE constraint for $\mu$. The operator $\Delta^r_\Acal$ (from $V$ to $V$) associated to the symbol $\xi \mapsto [\Abb(\xi)^* \Abb(\xi)]^{2^{r-1}}$ has order $2^{r}k$ and satisfies the following properties: Firstly, every $\Acal$-free measure is a $\Delta_\Acal^r$-free measure since
		\[
		\Delta^r_\Acal (\mu) = \Delta_\Acal^{r-1} \circ \Delta_\Acal^{r-1} (\mu) = \cdots = \Delta^{r-1}_\Acal \circ \Delta^{r-2}_\Acal \circ \cdots \circ \Acal^* (\Acal \mu) = 0.
		\]
		Secondly, the wave-cones of $\Lambda_{\Delta_\Acal^r}$ and $\Lambda_\Acal$ coincide due to the identity
		\[
		\ker [\Abb(\xi)^* \Abb(\xi)]^{2^{r-1}} = \cdots = \ker [\Abb(\xi)^* \Abb(\xi)] = \ker \Abb(\xi).
		\] 	
		Thus, upon taking $r = r(k,d) > \log_2 (d/k)$, one may apply the estimates from 
		Theorem~\ref{thm:Lp} to $\Delta_\Acal^r$ and $\mu$. The smallness constant for $\Delta_\Acal^r$ still depends on the same parameters $p,d,\Acal,L, \Omega'$ but now they also depend on $r$ and therefore the smallness constant may not coincide with the one for $\triangle_\Bcal$. This however does not require to introduce a dependency parameter on $k$ since $k = k(\Acal)$.
	\end{proof}

	\begin{proof}[Proof of Theorem~\ref{thm:limiting}] 
		The argument is exactly the same as the one used in the proof  of Theorem~\ref{thm:Lp}, with the exception that the constant-rank and canceling assumptions will allows us to improve the regularity of the source term $\sigma$, namely that (recall that $\supp \mu_1 \Subset \Omega_1$ and $\dist(\Omega_1,\partial \Omega) \approx \lambda$)
		\begin{equation}\label{eq:lorraine}
			\|\Acal \mu_1\|_{\Wrm^{-1,d/(d-1)}} \le C\big( |\mu|(\Omega) + |\sigma|(\Omega)\big)
		\end{equation}
		with a constant $C = C(d,\Acal,L,\Omega',\lambda)$. Once this is established, the rest follows as before since 
		\[
		q(k-1) = \frac{d}{d-k} \qquad \text{for \, $q = \frac{d}{d-1}$}.
		\] 
		
		To see that the estimate in~\eqref{eq:lorraine} holds, 
		let $1 < p < d/(d-k)$. Observe that if $\eps$ is sufficiently small, by the previous part of the proof we may assume without any loss of generality that $\mu \in \Lrm^p_\loc(\Omega)$ (with bounds that depend solely on the total variation of $\mu$ and $\sigma$). Let $\chi$ be a suitable cut-off function on $\Omega$ satisfying $\chi \equiv 1$ on $\supp \phi_1$. Then, 
		\[
		\|[\Acal,\phi_1](\mu)\|_{\Wrm^{-k,d/(d-1)}} \lesssim \|[\Acal,\phi_1](\chi\mu)\|_{\Wrm^{-k,p(1)}} {\lesssim} \|\chi\mu\|_{\Wrm^{-1,p(1)}} \le C|\mu|(\Omega),
		\]
		where in the first inequality we have used that $p > 1$ and in the last inequality that $p < d/(d-1)$. Here, $\chi$ can be chosen so that the constant $C$ depends on $d,\Acal,\Omega',\lambda$. Similarly, we also get (recall that $k < d$)
		\[
		\|\phi_1 \sigma\|_{\Wrm^{-k,d/(d-k)}} \le C \|\phi_1 \sigma\|_{\Wrm^{-1,d/(d-1)}} \le C\big( |\mu|(\Omega) + |\sigma|(\Omega)\big),
		\] 
		where the last inequality is a direct consequence of the local estimates for canceling operators given in Theorem~\ref{thm:localVS}.
		Since $\Acal \mu_1 = \phi_1 \sigma + [\Acal,\phi_1](\chi\mu)$, this proves~\eqref{eq:lorraine}.
	\end{proof}

	\begin{proof}[Proof of Corollary~\ref{cor:CC}.]
		The proof of the first part of this corollary is immediate from Theorem~\ref{thm:Lp} since $\Lrm^p$-bounds for $p > 1$ imply $\Lrm^q$-equiintegrability for $1 \le q < p$. For the second part, we first observe that $\Lrm^q$-equiintegrability upgrades weak* convergence in the sense of measures to weak convergence locally in $\Lrm^q$. Further, if $\mu_j \to \mu$ in measure or almost everywhere (as maps), then the $\Lrm^q$-equiintegrability implies the strong convergence locally in $\Lrm^q$ by Vitali's convergence theorem.
	\end{proof}

	\begin{proof}[Proof of Proposition~\ref{prop:rigid}]
		Note first that by the assumed convexity of $\Kcal$ it holds that either the cone is acute, i.e., $-\Kcal \cap \Kcal = \{0\}$ or that $\Kcal = V$. In the latter case, $\Lambda_\Acal = \{0\}$ and $\Acal$ is elliptic. Thus, there are no non-zero \emph{integrable} solutions of our system. 
		
		To show the theorem also in the first case, we argue by contradiction. So assume that there is a non-zero $u \in \Lrm^1(\R^d;V)$ with $\Acal u = 0$ and $u \in \Kcal$ almost everywhere.
		By the linearity of the integral, it holds that $\Kcal \ni e := \int u \dd x \neq 0$; here we have used that $\Kcal$ is an acute cone, so that $\int u \dd x = 0$ if and only if $u$ is identically zero. 
		
		We define $u_\lambda(x) := \lambda^{-d} u(x / \lambda)$ and proceed to apply a well-known rescaling argument (see~\cite[Proposition~2.2]{VanSchaftingen13}): Since $u \in \Lrm^1(\R^d;V)$, one has that $u_\lambda \to e\delta_0$ in the sense of distributions as $\lambda \to 0^+$. It follows that 
		\[
		\Acal (e\delta_0) = 0 \qquad \text{in $\Dcal'(\R^d;V)$.}
		\]
		Applying the Fourier transform to both sides of this equation, we obtain $\Abb(\xi)e = 0$ for all $\xi \in \R^d$. Hence $e \in \Lambda_\Acal$. As $e \in \Kcal \setminus \{0\}$, the the assumption $\Kcal \cap \Lambda_\Acal = \{0\}$ yields the sought contradiction.
		
		Remark~\ref{rem:rigid} follows by observing that in reaching a contradiction it suffices to assume that
		\[
		\Kcal \cap \bigcap_{|\xi| = 1} \ker \Abb(\xi) = \{0\}.
		\]
		This completes the proof.
	\end{proof}

	\section{Counterexamples} \label{sc:counterexamples}
	
	As shown in the previous sections, we can expect compensated compactness in $\Lrm^1_\loc$ under the assumption that an $\Acal$-free (or uniformly $\Acal$-bounded) sequence satisfies
\[
	\dist(\mu_j,\Dcal) \le \eps, \qquad 0 < \eps \ll 1,
\] 
where $\Dcal \subset L$ is a one-sided cone and $L$ is a subspace with no $\Lambda_\Acal$-connections. However, relaxing this to a double-sided constraint 
\begin{align}\label{eq:double_p}
	\dist(\mu_j,\Dcal \cup -\Dcal) \to 0 \quad \text{in $\Lrm^p$ for any $p \in [1,\infty]$}
\end{align}
does not yield analogous compactness results.

We will first demonstrate,  with three counterexamples {and a general result about the failure of $\Lrm^1$-compensated for $L$-elliptic systems from~\cite{ArroyoRabasa19?}}, that~\eqref{eq:double_p} with $p = 1$ is not enough to rule out concentrations. 

For the following three examples, we specialize to the case $\Acal = \curl$ for $M^{n \times d}$-valued maps, for which we have $\Lambda_{\curl} = \setn{ A \in \R^{n \times d} }{ \rank A \leq 1 }$. The first two examples are constructive but less general, and the third and fourth examples are rather abstract but cover general situations.  

\begin{example}[Gradients swirling around an elliptic space]
Let $\SD(d) \subset \R^{d \times d}$ denote the space of trace-free symmetric $(d \times d)$-matrices ($d \geq 2$). Clearly, this space only intersects the cone of rank-one matrices at the origin. We will construct a sequence of gradients $\nabla v_j \in \Lrm^1(B_1;\R^{d\times d})$  satisfying the following properties:
\begin{enumerate}[(i)]
    \item\label{itm:L1_dist} $\|\dist( \nabla v_j, \SD(d))\|_{\Lrm^1} \to 0$ as $j \to \infty$;
    \item $\nabla v_j \Lcal^d \toweakstar 0$ in $\Dcal'(B_1;\R^{d\times d})$;
    \item\label{itm:L1_lim} $\nabla v_j \not\to 0$ in $\Lrm^1(B_1;\R^{d\times d})$. 
\end{enumerate}
This means that imposing the $\Lrm^1$-closeness condition~\eqref{itm:L1_dist} to a $\frac{(d+2)(d-1)}{2}$-dimensional subspace (this subspace is $2$-dimensional if $d = 2$) that only intersects the rank-one cone at the origin, is insufficient for improving weak* convergence to strong $\Lrm^1$ convergence for a curl-free sequence, in opposition to the statement of Corollary~\ref{cor:CC} for $\Lrm^\infty$-closeness to a fixed polar.

Since the ideas are similar, we will only construct our example for $d = 2$. Let $0 < \eps < 1$ and define 
\[
  u_\eps(x) :=\frac{1}{\abs{\ln \eps}} g_\eps(\ln \abs{x}),  \qquad\text{where}\qquad
  g_\eps(t) := \eta\biggl( \frac{t}{\ln \eps} \biggr) t,
\]
and $\eta \colon \R \to \R$ is a smooth function with $\eta(t) = 1$ for $\eta \leq 1$ and $\eta(t) = 0$ if and only if $t \geq 2$. We compute, where we set $R_\eps(x) := \frac{\ln \abs{x}}{\ln \eps}$,
\begin{align*}
  \nabla^2 u_\eps(x) &=\frac{1}{\abs{\ln \eps}} \bigl( g_\eps'(\ln \abs{x}) \nabla^2 \ln \abs{x} + g_\eps''(\ln \abs{x}) \nabla \ln \abs{x} \otimes \nabla \ln \abs{x} \bigr) \\
  &= \frac{1}{\abs{\ln \eps}} \bigl(\eta(R_\eps(x)) + \eta'(R_\eps(x))R_\eps(x)\bigr) \nabla^2 \ln \abs{x} \\
  &\qquad + \frac{1}{\abs{\ln \eps}^2} \bigl(2\eta'(R_\eps(x)) + \eta''(R_\eps(x))R_\eps(x) \bigr) \frac{x}{\abs{x}^2} \otimes \frac{x}{\abs{x}^2} \\
  &=: \mathrm{I}_\eps + \mathrm{II}_\eps.
\end{align*}
We observe that $R_\eps(x) \leq 1$ for $\abs{x} \geq \eps$ and $R_\eps(x) \geq 2$ for $\abs{x} \leq \eps^2$. Notice also that $\eta(t) + \eta'(t)t$ is $1$ if $t \leq 1$ and $0$ if $t \geq 2$ and bounded and nonzero in between. Thus,
\[
  \int_{B_1} \abs{\mathrm{I}_\eps} \dd x
  = \int_{B_1 \setminus B_{\eps^2}} \abs{\mathrm{I}_\eps} \dd x,
\]
whereby
\[
  \int_{B_1} \abs{\mathrm{I}_\eps} \dd x 
  \approx \frac{1}{\abs{\ln \eps}} \int_{B_1 \setminus B_{\eps^2}} \abs{\nabla^2 \ln \abs{x}} \dd x
  = \frac{2\sqrt{2}\pi}{\abs{\ln \eps}}\int_{\eps^2}^1 \frac{1}{r} \dd r
  = 4\sqrt{2}\pi .
\]
Moreover, $2\eta'(t) + \eta''(t)t$ is supported in $(1,2)$ and bounded, we have
\[
  \int_{B_1} \abs{\mathrm{II}_\eps} \dd x
  \lesssim \frac{1}{\abs{\ln \eps}^2} \int_{B_1 \setminus B_{\eps^2}} \abs{x}^{-2} \dd x \to 0.
\]

Now define $v_\eps := \nabla u_\eps$, for which $\nabla v_\eps = \nabla^2 u_\eps$. Then, since $g_\eps$ converges to the function that is $1$ almost everywhere in $\Lrm^1_\loc$  this yields~(ii). Moreover, $\int_{B_1} \abs{\nabla^2 u_\eps} \dd x \approx 1$, so~(iii) also holds. For~(i), we note that $\nabla^2 \ln \abs{x} \in \SD(2)$ almost everywhere since $\nabla^2  \ln \abs{x}$ is a Hessian and $\trace (\nabla^2 \ln \abs{x}) = \Delta \ln \abs{x} = 0$ (recall that $\ln \abs{x}$ is the fundamental solution of the Laplacian in two dimensions). Therefore,
\[
  \dist(\nabla v_j(x),\SD(2)) \leq  \abs{\mathrm{II}_\eps}.
\]
Thus,
\[
  \int_{B_1} \dist(\nabla v_j(x),\SD(2)) \dd x
  \leq  \int_{B_1} \abs{\mathrm{II}_\eps} \dd x \to 0,
\]
yielding~(i).
\end{example}

\begin{example}[Quasiconformal maps] We recall that the conformal coordinates $(a^+,a^-) \in \C^2$ of a matrix $A\in \R^{2 \times 2}$ are defined by the rule
\[
	Av = a_+ v + a_-\cl{v},
\]
under the identification of vector $v =(x,y) \in \R^2$ with complex numbers $v = x + \mathrm{i}y$. Verifying that this defines a linear isomorphism from $\R^{2 \times 2}$ onto $\C^2$ is straightforward. The complex dilation of a matrix $A \in \R^{2 \times 2}$, which is defined as
\[
	\mu_A\coloneqq \frac{a_-}{\cl{a_+}},
\]
quantifies how far is a matrix $A$ from being conformal (precisely when $a_- = 0$). In this vein, for a real number $r \in \R$ we define
\[
E_r \coloneqq \set{A \in M^{2 \times 2}}{\mu_A = r},
\]
of matrices with complex dilation equal to $r$. With these conventions, it is straightforward that 
\[
	E_0 = \set{\begin{pmatrix}
			a & b \\-b & a
		\end{pmatrix}}{a,b \in \R^2}
\]
is the space of conformal matrices. 
%
Notice that the set space of conformal matrices possesses no rank-1 connections.  
Indeed, this follows from the fact that $\det(A) = \frac 12|A|^2$ for all $A \in E_0$. Moreover, since the change to conformal coordinates is a linear isomorphism, it follows that 
\[
	A \in E_r \quad \Longrightarrow \quad \dist(A,E_0) \le C |a_-| \le \frac{C|A|\cdot|r|}{\sqrt{2}}.
\]
Hence,
\begin{equation}\label{eq:conestocones}
	\dist\left(\frac{A}{|A|},E_0\right) \le C |r| \quad \text{for all $A \in E_r$}
\end{equation}
and in particular $E_r$ belongs to a closed (two-sided) $Cr$-neighborhood of $E_0$.

\begin{theorem}[Astala et al.~{\cite[Thm. 3.18]{Astala08}}]\label{thm:astala} Let $\eps \in (0,1)$. For bounded set $\Omega \subset \R^2$ there exists a mapping $f \in W^{1,1}(\Omega;\R^2) \cap C(\bar\Omega;\R^2)$ satisfying
\begin{enumerate}
	\item $f(x) = 0$ on $\partial \Omega$,
	\item $\nabla f(x) \in E_{-\eps} \cup E_{\eps}$ for almost every $x \in \Omega$,
	\item $f \in W^{1,q}(\Omega;\R^2)$ for all $q < 1 + \eps$, but for any ball $B \subset \Omega$ it holds
	\[
		\int_B |\nabla f|^{1 + \eps} = \infty
	\]
\end{enumerate}
\end{theorem}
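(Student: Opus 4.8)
The plan is to prove this by \emph{convex integration}, in the spirit of M\"uller--\v{S}ver\'ak and, concretely, following the $L^p$-theory for the Beltrami equation of Astala, Faraco and Sz\'ekelyhidi \cite{Astala08} on which the cited statement rests. Write $K_\eps := E_\eps \cup E_{-\eps} \subset \R^{2\times 2}$. Since the change to conformal coordinates is a linear isomorphism, each $E_r$ is a $2$-dimensional subspace; neither $E_\eps$ nor $E_{-\eps}$ contains a nonzero rank-one matrix (on $E_{\pm\eps}$ one has $\det A = (1-\eps^2)\abs{a_+}^2$), but the two planes admit many rank-one connections with one another, and by~\eqref{eq:conestocones} the (non-convex) union $K_\eps$ sits in a two-sided $C\eps$-cone about the conformal subspace $E_0$. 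The construction then has two parts: (i) a ``staircase laminate'' with barycenter $0$, supported essentially on $K_\eps$, whose mass escapes to infinity at the borderline rate $\abs{A}^{-(1+\eps)}$; and (ii) a Baire-category scheme converting such a laminate into a genuine Sobolev map with gradient in $K_\eps$ a.e.

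For part (i) I would fix a pair $A_+ \in E_\eps$, $A_- \in E_{-\eps}$ with $\rank(A_+ - A_-) = 1$ and build, by iterated rank-one splittings of Faraco's staircase type, a sequence of laminates $\nu_N$ of finite order with $\bar\nu_N = 0$, with $\supp \nu_N$ within distance $1/N$ of $K_\eps$, and with distribution function $\nu_N(\{\abs{A} > t\}) \gtrsim t^{-(1+\eps)}$ for $1 \le t \le N$. The crux here is that the geometry of the two planes $E_{\pm\eps}$ — how far out a rank-one connection can reach while leaving only a controlled fraction of the mass behind — quantitatively ties the admissible tail decay of laminae on $K_\eps$ to $\eps$, and a self-similar choice of the splitting parameters pins this rate to $1+\eps$. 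Consequently $\int \abs{A}^{1+\eps}\dd\nu_N \to \infty$ as $N\to\infty$, while $\sup_N \int \abs{A}^q\dd\nu_N < \infty$ for every $q < 1+\eps$. The essential point for this paper is that this exponent is close to $1$, i.e.\ dramatically worse than the $L^p$-for-all-$p$ that the one-sided constraint delivers.

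For part (ii), fix a ball $B \Subset \Omega$. Following the standard convex-integration framework, one works in the complete metric space of maps $g \in W^{1,1}(B;\R^2)$ with $g = 0$ on $\partial B$ and $\nabla g$ a.e.\ in the rank-one convex hull $\cl{K_\eps^{rc}}$, metrized so that convergence forces $L^1$-convergence of gradients; the basic stability lemma (approximate solutions are not destroyed by small rank-one laminar perturbations) shows that the maps with $\nabla g \in K_\eps$ a.e.\ form a residual subset, and a quantitative version of the perturbation step lets one additionally force the distribution function of $\nabla g$ to dominate a fixed fraction of that of $\nu_N$ on $\{t \le N\}$. Carrying this out along an exhausting sequence $N \to \infty$ produces a single $f \in W^{1,1}(B;\R^2) \cap C(\bar B;\R^2)$ with $f = 0$ on $\partial B$, $\nabla f \in K_\eps$ a.e., $\nabla f \in L^q(B)$ for all $q < 1+\eps$, yet $\int_{B'}\abs{\nabla f}^{1+\eps} = \infty$ for every ball $B'\subset B$. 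Patching such maps, with vanishing boundary values, on a Whitney-type family of balls exhausting an arbitrary bounded $\Omega$ then yields the asserted $f$.

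The principal obstacle, as in Astala--Faraco--Sz\'ekelyhidi, is the staircase laminate of part (i): one must prove that $K_\eps$ genuinely supports laminates whose tails are no lighter than $\abs{A}^{-(1+\eps)}$, and simultaneously retain enough quantitative control — $L^1$ bounds on the gradients and a uniform modulus of continuity for the iterates — to land in $W^{1,1}(\Omega;\R^2) \cap C(\bar\Omega;\R^2)$ rather than merely off a small exceptional set. This is markedly more delicate than the crude $L^\infty$ bookkeeping sufficient for bounded Tartar-square-type constructions, and it is there that the sharpness of the exponent $1+\eps$ is actually established.
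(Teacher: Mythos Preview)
The paper does not give its own proof of this statement: Theorem~\ref{thm:astala} is quoted verbatim from~\cite[Thm.~3.18]{Astala08} and used as a black box to derive Proposition~\ref{prop:astala}. So there is no ``paper's proof'' to compare against; your outline is in fact a sketch of the argument in the cited reference itself, and as such it is faithful to the original Astala--Faraco--Sz\'ekelyhidi approach (staircase laminates on $E_\eps \cup E_{-\eps}$ followed by a Baire-category convex integration scheme).
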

The following result, which follows directly from the Theorem~\ref{thm:astala} above, shows that the higher-integrability bounds from Theorem~\ref{thm:Lp} may fail for non-convex constraints of the polar in the case $d = 2$. 
\begin{proposition}[Almost conformal curl-free fields]\label{prop:astala} For every $p \in (1,2)$ and every $\eps \in (0,p -1)$, there exists an integrable matrix field $\mu \in \Lrm^{q}(\Omega;\R^2)$ for all $q \in [1,1 + \eps)$, such that
	\[
		\curl \mu = 0 \quad \text{on $\Omega$}
	\]
	and 
	\[
		\dist\left(\frac{\mu}{|\mu|}(x),E_0\right) \le C\eps.
	\]
	However, 
	\[
		\int_B |\mu|^{1 + \eps} \, \dd x = \infty \quad \text{for every ball $B \subset \Omega$.}
	\] 
\end{proposition}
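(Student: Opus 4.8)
The plan is to take $\mu := \nabla f$, where $f$ is the map furnished by Theorem~\ref{thm:astala} for the dilation parameter $\eps$, and then verify the asserted properties one by one. The entire ``convex-integration'' content is already packaged inside Theorem~\ref{thm:astala}; what remains on our side is bookkeeping together with a translation of the dilation condition $\mu_A = \pm\eps$ into a distance-to-$E_0$ estimate, which is exactly the content of \eqref{eq:conestocones}.

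First I would fix $p \in (1,2)$ and $\eps \in (0,p-1)$, observe that then $\eps \in (0,1)$, and apply Theorem~\ref{thm:astala} to obtain $f \in W^{1,1}(\Omega;\R^2) \cap C(\bar\Omega;\R^2)$ with $\nabla f(x) \in E_{-\eps} \cup E_\eps$ for a.e.\ $x \in \Omega$, with $f \in W^{1,q}(\Omega;\R^2)$ for every $q < 1+\eps$, and with $\int_B |\nabla f|^{1+\eps} = \infty$ for every ball $B \subset \Omega$. Setting $\mu := \nabla f$, the curl-free condition is immediate: $\mu$ is the distributional gradient of an $\Lrm^1_\loc$ function, so the mixed distributional partials commute and $\curl \mu = 0$ in $\Dcal'(\Omega)$. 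Likewise, property~(3) of Theorem~\ref{thm:astala} says precisely that $\mu \in \Lrm^q(\Omega)$ for all $q \in [1,1+\eps)$ and that $\int_B |\mu|^{1+\eps} = \infty$ for every ball $B \subset \Omega$, which are two of the three claims.

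Finally I would bound the normalized polar. At a.e.\ $x \in \Omega$ we have $\mu(x) \in E_{-\eps} \cup E_\eps$; in particular $\mu(x) \neq 0$, since the complex dilation of the zero matrix is undefined and hence $0 \notin E_r$ for $r \neq 0$, so the polar $\mu(x)/|\mu(x)|$ is well-defined a.e. Applying \eqref{eq:conestocones} with $r = \eps$ or $r = -\eps$ according to which of the two sets contains $\mu(x)$ yields
\[
\dist\left(\frac{\mu}{|\mu|}(x),E_0\right) \le C|\eps| = C\eps \qquad \text{for a.e. } x \in \Omega,
\]
with $C$ the absolute constant from \eqref{eq:conestocones}. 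This gives the last claim and completes the proof. There is no genuine obstacle here; if I had to single out the one place deserving a moment's care it is the step from $\mu_A = \pm\eps$ to $\dist(A/|A|,E_0) \le C\eps$, which rests on the change to conformal coordinates being a linear isomorphism of $\R^{2\times2}$ onto $\C^2$ — but this is already recorded in the discussion preceding Theorem~\ref{thm:astala}.
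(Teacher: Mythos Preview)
Your proposal is correct and follows essentially the same approach as the paper: set $\mu = \nabla f$ with $f$ from Theorem~\ref{thm:astala}, read off the curl-free condition and the integrability properties directly from that theorem, and invoke~\eqref{eq:conestocones} for the distance estimate on the polar. Your write-up is in fact slightly more careful than the paper's (you explicitly note that $0 \notin E_{\pm\eps}$ so the polar is well-defined), and you correctly observe that the blow-up of $\int_B|\mu|^{1+\eps}$ is literally item~(3) of Theorem~\ref{thm:astala}, with no need for H\"older's inequality at this point.
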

\begin{proof}
	Simply let $\mu = \nabla f$, where $f$ is the function given by Theorem~\ref{thm:astala} for $\eps \in (0,p-1)$. Clearly $\mu$ is curl-free. Moreover, the estimate~\eqref{eq:conestocones} and the fact that $\nabla f(x) \in E_{-\eps} \cup E_{\eps}$ imply that distance from the polar of $\mu$ to $E_0$ is bounded by $\eps$ up to a constant. The failure of the local $p$-integrability follows from (3)  in the theorem above, the fact that $p > 1 + \eps$ and H\"older's inequality.
\end{proof}
\end{example}

Now we show that under the (weaker) assumption of $\Lrm^1$-closeness of a sequence of gradients to a \textit{line} away from the wave cone, we still cannot expect compensated compactness. However, this example is non-constructive, and relies on the characterization~\cite[Theorem~1]{KristensenRindler10YMNoErr} of $\BV$-Young measures via a Jensen-type inequality. We refer the reader to~\cite[Chapter~12]{Rindler18book} for notation.

\begin{example}
    Fix any $P_0 \in \R^{\ell} \otimes \R^d$ with $|P_0| = 1$, $\rank P_0 \geq 2$, and consider the following generalized Young measure:
	\[
        \boldsymbol{\nu} = (\nu_x,\lambda_\nu,\nu_x^\infty) := \biggl(\delta_0,\delta_0,\frac{1}{2}\delta_{P_0} + \frac{1}{2}\delta_{-P_0}\biggr) \in \Ybf^{\Mcal}(B_1;\R^{\ell} \otimes \R^d).
	\]
	Namely, $\boldsymbol{\nu}$ concentrates only at the origin with the direction of concentration oscillating between $\pm P_0$. We have for the barycenter
	\[
        [\boldsymbol{\nu}] := [\delta_0]\Lcal^d\mres B_1 + \Big[\frac{1}{2}\delta_{P_0} + \frac{1}{2}\delta_{-P_0}\Big]\delta_0 = 0
	\]
Also $\lambda_\nu(\partial B_1) = 0$. Now take any quasiconvex $h \colon\R^{\ell} \otimes \R^d\to \R$ with linear growth. We have
\[
    h\biggl([\delta_0] + \Big[\frac{1}{2}\delta_{P_0} + \frac{1}{2}\delta_{-P_0}\Big]\frac{\di\delta_0}{\di\Lcal^d}(x)\biggr) = h(0)  = \dprb{ h, \delta_0 } + \dprBB{ h^\#, \frac{1}{2}\delta_{P_0} + \frac{1}{2}\delta_{-P_0} } \frac{\di\delta_0}{\di\Lcal^d}(x),
\]
for almost every $x$, where
\[
    h^\#(A) := \lim_{\eps \to 0^+} \, \sup \, \setBB{ \frac{f(tB)}{t} }{ 0 < |A-B| < \eps, \ t > \frac{1}{\eps} }
\]
is the upper (generalized) recession function associated to $h$. Hence, by~\cite[Theorem~1]{KristensenRindler10YMNoErr}, we deduce that $\boldsymbol{\nu} \in \BVY(B_1;\R^{\ell} \otimes \R^d)$, so by~\cite[Proposition~12.18]{Rindler18book}, we can find an (improved) generating sequence of maps $(u_j)_j \subset (\Wrm^{1,1}\cap\Crm^\infty)(B_1;\R^\ell)$ with $\nabla u_j \overset{\textbf{Y}}{\to} \boldsymbol{\nu}$. But if we choose
\[
    f(x,A) := \dist(A, \spn\{P_0\}) \in \textbf{E}(B_1;\R^{\ell} \otimes \R^d),
\]
where $\textbf{E}(B_1;\R^{\ell} \otimes \R^d)$ is the pre-dual of $\textbf{Y}^{\Mcal}(B_1;\R^{\ell} \otimes \R^d)$, we have that
\[
    \ddprb{ f, \delta [\nabla u_j] } = \int_{B_1} \dist\biggl(\nabla u_j(x), \spn\{P_0\}\biggr) \dd x \to 0.
\]
So we have found a sequence of gradients (curl-free functions) that converge weakly* in $\Mcal(B_1;\R^{\ell} \otimes \R^d)$ to $[\boldsymbol{\nu}] = 0$, and the distance between their polars and $\{P_0, -P_0\}$ converges to 0 strongly in $\Lrm^1$. However, we cannot improve weak* convergence to strong convergence here since~\cite[Corollary~12.15]{Rindler18book} tells us that this is only possible provided $\lambda_\nu = 0$, which is not true.
\end{example}

More generally, for operators $\Acal$ satisfying the constant-rank property, we recall the following abstract result on the failure of $\Lrm^1$-compensated compactness for $\Acal$-free measures~\cite[Corollary~3.1]{ArroyoRabasa19?}:
\begin{proposition} Let $\Acal$ be a linear PDE operator from $V$ to $W$ that satisfies the constant-rank property.
	Let $L$ be a non-trivial subspace of 
	\[
		\spn \Lambda_{\Acal} \subset V,
	\]  and assume that $L$ has no non-trivial $\Lambda_\Acal$-connections, i.e., 
	\[
	L \cap \ker\Abb(\xi) = \{0\}  \qquad \text{for all $\xi \in \R^d \setminus \{0\}$}.
	\]
	Then, for every non-trivial cone $\Ccal \subset L$, there exists a sequence $\{w_j\} \subset \Crm^\infty(B_1;V)$ of $\Acal$-free vector-valued functions satisfying
	\begin{align*}
		w_j  \, \mathcal L^d & \; \toweakstar \; 0 \quad \text{in $\Mcal(B_1;V)$,}\\
		\dist(w_j,\Ccal \cup - \Ccal) & \; \to  \; 0 \quad \text{in $\Lrm^1(B_1)$},
	\end{align*}
	but
	\begin{gather*}
		w_j \not\toweak 0 \quad \text{in } \Lrm^1(B_1;V), \text{ and}  \\
		\{|w_j|\} \; \text{is not locally equiintegrable on any sub-domain of $B_1$}.
	\end{gather*}
\end{proposition}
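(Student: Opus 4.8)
The plan is to reduce the statement to the construction of a single ``building block'' and then to scatter rescaled copies of it densely throughout $B_1$. First I would fix $e_0 \in \Ccal \setminus \{0\}$; since $\Ccal$ is a cone we have $\R e_0 \subseteq \Ccal \cup (-\Ccal)$, so it suffices to produce fields whose polar is $\Lrm^1$-close to the \emph{line} $\R e_0$. Because $e_0 \in \spn \Lambda_\Acal$, we may write $e_0 = \sum_{i=1}^N v_i$ with $v_i \in \ker \Abb(\xi_i)$ for suitable unit vectors $\xi_i$. Letting $\Bcal$ be a potential of $\Acal$ (it exists as $\Acal$ has constant rank; thus $\Acal\Bcal = 0$, $\im \Bbb(\xi) = \ker \Abb(\xi)$ for $\xi \neq 0$, and $\Bbb(0) = 0$), the whole argument reduces to the following claim: \textit{for every $\eta \in (0,1)$ there is $\psi_\eta \in \Crm^\infty_c(B_{1/2};U)$ so that $W_\eta := \Bcal \psi_\eta$ obeys $\|W_\eta\|_{\Lrm^1} = 1$ and $\int_{\R^d} \dist\bigl(W_\eta(x),\R e_0\bigr)\dd x < \eta$.} Note $W_\eta$ is automatically $\Acal$-free, smooth, compactly supported in $B_{1/2}$, and of vanishing mean, $\widehat{W_\eta}(0) = \Bbb(0)\widehat{\psi_\eta}(0) = 0$.

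Granting the claim, I would pick $\eta_j \downarrow 0$, scales $s_j \downarrow 0$ with $s_j \ll j^{-1}$, and a finite $j^{-1}$-net $\{a_{j,1},\dots,a_{j,N_j}\} \subset B_1$ (so $N_j \sim j^d$) such that the balls $B_{s_j/2}(a_{j,i})$ are pairwise disjoint and contained in $B_1$, and set
\[
	w_j(x) := \frac{1}{N_j s_j^d}\sum_{i=1}^{N_j} W_{\eta_j}\!\Bigl(\tfrac{x - a_{j,i}}{s_j}\Bigr) \ \in\ \Crm^\infty(B_1;V).
\]
Then $w_j$ is $\Acal$-free (each summand is, by homogeneity of $\Acal$) and $\|w_j\|_{\Lrm^1(B_1)} = 1$. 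Its support has Lebesgue measure $\lesssim N_j s_j^d \to 0$ yet forms a $j^{-1}$-net of $B_1$, so for every subdomain $U \Subset B_1$ a fraction $\gtrsim |U|$ of the unit mass of $w_j$ sits, for $j$ large, on a subset of $U$ of measure $\lesssim |U|\,N_j s_j^d \to 0$; hence $\{|w_j|\}$ is not locally equiintegrable on any subdomain, and consequently — since a weakly $\Lrm^1$-convergent sequence is equiintegrable by Dunford--Pettis — $w_j \not\toweak 0$ in $\Lrm^1$. Testing against $\phi \in \Crm_c(B_1;V)$ and exploiting $\int W_{\eta_j} = 0$ gives $\bigl|\int_{B_1} w_j \cdot \phi\bigr| \le \|W_{\eta_j}\|_{\Lrm^1}\,\omega_\phi(s_j) = \omega_\phi(s_j) \to 0$, with $\omega_\phi$ the modulus of continuity of $\phi$, so (using the uniform mass bound) $w_j \Lcal^d \toweakstar 0$ in $\Mcal(B_1;V)$. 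Finally, by disjointness of the bumps, the identity $\dist(tA,\Ccal\cup(-\Ccal)) = t\,\dist(A,\Ccal\cup(-\Ccal))$ for $t>0$ (valid as $\Ccal\cup(-\Ccal)$ is a cone), and $\R e_0 \subseteq \Ccal \cup (-\Ccal)$,
\[
	\int_{B_1}\dist\bigl(w_j(x),\Ccal\cup(-\Ccal)\bigr)\dd x = \int_{\R^d}\dist\bigl(W_{\eta_j}(y),\Ccal\cup(-\Ccal)\bigr)\dd y \le \int_{\R^d}\dist\bigl(W_{\eta_j}(y),\R e_0\bigr)\dd y < \eta_j \longrightarrow 0.
\]

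The hard part will be the claim: producing a smooth, compactly supported, $\Lrm^1$-normalized $\Acal$-free field whose polar is $\Lrm^1$-close to the fixed direction $e_0 \in \spn \Lambda_\Acal \setminus \Lambda_\Acal$. The obstruction is that no single localized plane wave $v_i\,\rho(x\cdot\xi_i)$, nor any finite superposition of such after cutting off, has values aligned with $\R e_0$, precisely because $e_0 \notin \Lambda_\Acal$ — indeed a one-sided version of the claim is forbidden by the rigidity of Proposition~\ref{prop:rigid}, which is exactly why the \emph{two-sided} set $\Ccal \cup (-\Ccal)$ is indispensable. The way around this is an iterated, multi-scale construction in which fine-scale oscillations along the wave-cone directions $v_1,\dots,v_N$ are arranged so that their joint $\Lrm^1$-mean points along $e_0 = \sum_i v_i$, while the correctors — solving $\Acal(\text{corrector}) = -\Acal(\text{main term})$ via the constant-rank calculus of Section~\ref{sec:rank} — that restore \emph{exact} $\Acal$-freeness are controlled in $\Lrm^1$ by an inductive gain over the scales. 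This is precisely the content of~\cite[Corollary~3.1]{ArroyoRabasa19?}, and in the write-up I would simply invoke it (or reprove it along the lines just indicated); everything else is the soft scattering argument above.
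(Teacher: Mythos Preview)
The paper does not prove this proposition at all: it is quoted verbatim from \cite[Corollary~3.1]{ArroyoRabasa19?}, and the remark that follows indicates that the argument there runs through the characterization of generalized $\Acal$-free Young measures (in the same spirit as the preceding example, which uses the $\BV$ Young-measure characterization for $\Acal=\curl$). So there is no in-paper proof to compare against beyond that pointer.

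Your route is genuinely different and more constructive. Instead of exhibiting the sequence abstractly as a generating sequence for a Young measure with a nontrivial concentration part supported on $\{\pm e_0\}$, you build a single compactly supported, $\Lrm^1$-normalized, mean-zero $\Acal$-free bump with polar $\Lrm^1$-close to $\R e_0$ and then scatter rescaled copies on a dense array of shrinking balls. The scattering half is clean and correct: $\Acal$-freeness under rescaling, the mass normalization, the weak-$*$ convergence via $\int W_\eta=0$, the homogeneity computation for $\dist(\,\cdot\,,\Ccal\cup(-\Ccal))$, and the failure of local equiintegrability (hence of weak $\Lrm^1$ convergence by Dunford--Pettis) all go through exactly as you wrote. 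What this buys over the Young-measure approach is an explicit, self-contained sequence; what the Young-measure approach buys is brevity, since once the characterization is in hand the existence of a generating sequence is a one-line consequence.

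There is one genuine wobble: you close by saying the building-block claim ``is precisely the content of~\cite[Corollary~3.1]{ArroyoRabasa19?}'' and that you would simply invoke it. It is not --- that corollary \emph{is} the full proposition you are proving, so citing it for the claim is circular (and if you have it, your scattering wrapper is superfluous). Nor can you cheaply extract your compactly supported bump from it, since the $w_j$ it produces live in $\Crm^\infty(B_1;V)$ without compact support, and cutting off destroys $\Acal$-freeness. What you actually need is the \emph{mechanism} behind the cited result: the staircase-laminate construction along the wave-cone chain $0\to v_1\to v_1+v_2\to\cdots\to e_0$, realized through the potential $\Bcal$ with correctors controlled by scale separation. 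Your sketch points at this correctly; just commit to it (or cite the corollary for the whole statement, as the paper does) rather than invoking the corollary for the intermediate claim.
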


\begin{remark}
The proof in this case appeals to the  characterization of $\Acal$-free Young measures, which has in been recently established independently in~\cite{ArroyoRabasa19?} and~\cite{KristensenRaita19?}. In this regard,  a thorough discussion and counterexamples to the compensated compactness for $\Lrm^1$-asymptotically $L$-elliptic systems can be found in~\cite[Sec. 3.2]{ArroyoRabasa19?}.
\end{remark}

Finally, we show that in our compensated compactness result, Corollary~\ref{cor:CC}, we cannot expect strong convergence since \emph{bounded oscillations} are still allowed.

\begin{example}[Laminates]\label{ex:laminates}
	Let $P \in \Lambda_\Acal$ and let $\xi \in \R^d$ be a normal direction such that $P \in \ker \Abb(\xi)$. If $\Ccal \subset E$ is a set with non-empty interior, then we may find $\delta > 0$ and vectors $A,B \in \Ccal$ such that
	\[
		A = B + \delta P.
	\]  
	Let $\phi:\R\to \R$ be the $[0,1]$-periodic extension of the indicator function $\chi_{[0,\frac 12)}$ and consider the sequence of $\{0,1\}$-valued maps
	\[
		\phi_j(x) := \phi(j(x \cdot \xi)), \qquad x \in \R^d.
	\]
	We define an $\{A,B\}$-valued sequence of laminates $u_j : \R^d \to \R^m$ oscillating on the $\xi$-direction by letting
	\[
		u_j:= B + \delta P \phi_j.
	\]
	Notice that this conforms a sequence of $\Acal$-free measures since
	\[
		\Acal u_j = [j\delta \phi'(j(\mathrm{d}x \cdot \xi))] \Abb(\xi)[P] = 0 \qquad \text{in $\Dcal'(\R^d;V)$.}
	\]
	By construction it holds
	\begin{align*}
		u_j &\, \toweak\,  \frac{A + B}2 \quad \text{locally in $\Lrm^p$ for all $p \in [1,\infty)$},\\
		u_j &\, \not\to\,  \frac{A + B}2 \quad  \text{locally in $\Lrm^1$.}\\
	\end{align*}
%
%
\end{example}

\end{document}